\DeclareMathOperator{\supp}{supp}
\newlength{\defbaselineskip}
\newcommand{\setlinespacing}[1]%
           {\setlength{\baselineskip}{#1 \defbaselineskip}}
\newcommand{\R}{\mathbb R}
\newcommand{\rd}{\R^d}
\newcommand{\C}{\mathbb C}
\newcommand{\Z}{\mathbb Z}
\newcommand{\T}{\mathbb T}
\newcommand{\N}{\mathbb N}
\newcommand{\zd}{\Z^d}
\newcommand{\cX}{\mathcal{X}}
\newcommand{\be}{\begin{equation}}
\newcommand{\ee}{\end{equation}}
\newcommand{\bea}{\begin{eqnarray}}
\newcommand{\eea}{\end{eqnarray}}
\newcommand{\Bea}{\begin{eqnarray*}}
\newcommand{\Eea}{\end{eqnarray*}}
\newcommand{\bt}{\begin{Theorem}}
\newcommand{\et}{\end{Theorem}}
\theoremstyle{plain}
\newtheorem{theorem}{Theorem}[section]
\newtheorem{lemma}{Lemma}[section]
\newtheorem{proposition}{Proposition}[section]
\newtheorem{corollary}{Corollary}[section]
\newtheorem{definition}{Definition}[section]
\newtheorem{remark}{Remark}[section]
\numberwithin{equation}{section}
\begin{document}
\title[On the weighted Wiener-L\'evy theorem]{On the weighted Wiener-L\'evy theorem: analogue on Euclidean space, strong converse on LCA group\\ and applications to modulation spaces}

\author[D. G. Bhimani]{Divyang G. Bhimani}
\address{Department of Mathematics, Indian Institute of Science Education and Research, Pune, India}
\email{divyang.bhimani@iiserpune.ac.in}

\author[K. B. Solanki]{Karishman B. Solanki}
\address{Department of Mathematics, Indian Institute of Technology Ropar, Punjab, India}
\email{karishsolanki002@gmail.com, staff.karishman.solanki@iitrpr.ac.in}

\thanks{}

\subjclass[2020]{43A25, 42B35, 47H99}

\keywords{Fourier series, weight, Wiener-L\'evy theorem, composition operators, Beurling algebras, Modulation space,  Fourier amalgam spaces}

\date{}

\dedicatory{}

\commby{}

%%% ----------------------------------------------------------------------
\begin{abstract}
We consider  the space (weighted Fourier algebra) of  Banach algebra valued functions $A^q_{\omega}(\Gamma,\cX),$ which consists of  all Fourier transforms of functions in $L^q_\omega(G,\cX)$. Here  $\omega$ is a  Beurling-Domar type weight  on a discrete abelian group $G$, $\Gamma$ is the dual of $G$, and  $\cX$ is a unital commutative Banach algebra. We shall prove a strong converse of the Wiener-L\'evy theorem in vector valued weighted setting. Specifically, we proved that  if $F$ is a $\cX-$valued function defined on $\mathbb{C}$ such that the composition $F\circ f:\Gamma\to\cX$ is in $A^q(\Gamma,\cX)$ ($1\leq q <2$) for all $f\in A^1_\omega(\Gamma,\mathbb{C})$, then $F$ must be real analytic on $\mathbb R^2$. Here the range of $q$ is sharp. Further, its multivariate analogue and analogue for locally compact abelian $G$ are also established. 
This is  the first result  which generalizes the classic theorems of  Helson, Kahane, Katznelson and Rudin  \cite{helson,rud} in the presence of proposed  weight.  On the other hand, we established the analogue of Wiener-L\'evy theorem in  Euclidean  Fourier algebra $A_{\omega}^q(\mathbb R^d)$ for a weight $\omega$ of regular growth.

As an application, we establish similar  results for weighted   modulation,  Wiener amalgam  and  Fourier amalgam spaces. This complements the work of Bhimani-Ratnakumar \cite{bhimani2016functions} and Feichtinger-Kobayashi-Sato \cite{HGWL1, HGWL2}; and  enables us to shed light on nonlinearity while understanding the dynamics of dispersive PDE in these spaces.

\end{abstract}

%%% ----------------------------------------------------------------------
\maketitle
\tableofcontents
%%% ----------------------------------------------------------------------

%---------------------------------------------------------------------------
\section{Introduction}
\subsection{Background and motivation}
Let $A(\mathbb{T})$ be the collection of all complex valued continuous function on the unit circle $\mathbb{T}$ of the complex plane $\mathbb{C}$ having absolutely convergent Fourier series. The classical Wiener's theorem \cite{NW} states that if $f\in A(\mathbb{T})$ is nowhere zero, then $\frac{1}{f}\in A(\mathbb{T})$.  L\'evy \cite{Le} generalized Wiener's theorem and showed that if $F$ is an analytic function in some neighborhood containing the range of $f\in A(\mathbb{T})$, then $F\circ f\in A(\mathbb{T})$. In fact, the converse is also true, and its first appearance dates back to  Katznelson \cite{Katz1959}. Given  a function  $F:E\to\mathbb{C}$, where $E\subset \mathbb{C}$, we  define a composition operator $$T_F(f)=F\circ f$$ for $f\in A(\mathbb T)$ whose range is contained in $E$. We restate combined classical Wiener-L\'evy and Katznelson theorem as follows:
\begin{theorem}[Wiener-L\'evy-Katznelson]
    A composition operator $T_F$ maps $A(\mathbb T)$ into $A(\mathbb T)$ if and only if $F$ is real analytic in $E.$
\end{theorem}
In analysis, \textit{weights} are  frequently  employed   as they are beneficial in many situations (e.g. to quantify the growth and decay conditions).  In fact, to study the behavior of functions around a certain point (and asymptotic behavior), weights are used to ignore their oscillations at infinity.  This has many applications in various fields of analysis such as signal theory, frame theory, sampling theory, Gabor analysis, etc.; for instance, refer to \cite{Badi,Ba,kb3,Gro,Gr, Bal25,FeiWeight1979, GroWeight2007, Ku} and references therein. 

It would be interesting  to inquire whether Wiener-L\'evy-Katznelson theorem can be extended in the presence of weights. Wiener-L\'evy theorem for weighted spaces has been studied extensively, see for example \cite{Kim, Badi, BhDeDa, Ba, Bhatt, kb2, kb, kb3, ki}.  Another motivation for us comes from the fact that composition operators are simple examples of non-linear mappings (non linearity). This knowledge plays a  crucial role  in studying nonlinear PDEs. See \cite{bhimani2016functions,SugimotoNOPJFA,Sugimoto2011, db, BhimaniITSP} and \cite[Chapter 5]{kassob}.

On the other hand, in analysis, vector valued cases are of importance as many times $B(\mathcal{Y})$-valued functions arises for investigation for some Banach or Hilbert space $\mathcal{Y}$, particularly in operator theory. We note that the vector valued analogues of the Wiener's theorem were first studied by Bochner Phillips in \cite{Bo} and more general results with weights and $p-$th power has been studied in \cite{kb,kb2}. The $B(\mathcal{Y})$-valued different analogues has also been investigated for different types of operators. We again refer the readers to \cite{Badi, Ba, kb2, kb, kb3} and references therein for more details. And this motivated us to consider the vector valued cases here.

In order to state main results of this paper, we briefly set notations. Throughout the paper, $G$ is a locally compact (noncompact) abelian group and $\Gamma$ is its (Pontryagin)\footnote{In fact, $\Gamma$ is  the group of continuous group homomorphisms from  $G$ to the circle group $\mathbb T$} dual group. The binary operation on $G$ will be addition, and that on $\Gamma$ will be multiplication. In order to avoid trivialities, we take $G$ and hence $\Gamma$ to be infinite. 

\begin{definition} \
    \begin{enumerate}
        \item A \textbf{\emph{weight}}  on $G$ is a measurable map $\omega:G\to[1,\infty)$ which satisfies the submultiplicative condition: $$\omega(x+y)\leq\omega(x)\omega(y) \quad \text{for   all} \ \ x,y\in G. $$ 
        \item A weight $\omega$ on $G$ is \textbf{\emph{admissible}} if it satisfies the \emph{Beurling-Domar (BD) condition} 
\begin{equation} \label{BDcond}
    \sum_{n\in\mathbb{N}} \frac{\log \omega(nx)}{n^2}<\infty \quad \text{for   all} \ \ x\in G.
\end{equation}
\item  For $1<q<\infty$, a weight $\omega$ on $G$ is a \textbf{\emph{$q-$algebra weight}} if it satisfies the following conditions
\begin{enumerate} 
    \item[(a)] subconvolutive: $\omega^{-q'}\ast\omega^{-q'}\leq \omega^{-q'}$, and 
    \item[(b)] $\displaystyle \int_G \omega(x)^{-q'} dx < \infty,$
\end{enumerate}
where $q'$ is conjugate of $q$, that is, $\displaystyle \frac{1}{q}+\frac{1}{q'}=1$, and $\ast$ denotes the convolution.
    \end{enumerate}
\end{definition}   
\begin{lemma}[examples \cite{FeiWeight1979, GroWeight2007, Ku}] \  
\begin{enumerate}
    \item  Consider the following class of weight functions:
$$\omega (x)=\omega_{a,b,s,t}(x) = e^{a|x|^b} (1+|x|)^s \left( \log (e+|x|) \right)^t\  \quad  (x \in \mathbb R^d, a,b,s,t \geq 0).$$
    If $0 \leq b \leq 1,$ then  $\omega_{a,b,s,t}$ is a weight (submultiplicative). If $0 \leq b <1,$ then   $\omega_{a,b,s,t}$ satisfies the Beurling-Domar condition.
    %\item If either $0<b<1, s,t \in \mathbb R$ or $b\in {0,1}$ and $s$ large enough, then $\omega_{a,b,s,t}$ is subconvolutive.
    \item  The polynomial weight: $\omega_s(x)=(1+|x|)^s$ for sufficiently large $s$ and exponential weight: $\omega(x)=e^{|x|^b}$ for $0<b<1$ are $q-$algebra weights on $\mathbb{R}^d$, and consequently their restrictions on $\mathbb{Q}^d$ and $\mathbb{Z}^d$. In particular, if $1<q<\infty$ and $s>d/q'$, then the polynomial weights $\displaystyle \omega_s(x)=(1+|x|^2)^\frac{s}{2}$ are $q-$algebra weights. See  \cite{Grab,Kerman,Ku}.
    \item Even on $\mathbb{Q}_p$, the $p-$adic number field with $p-$adic norm $|\cdot|_p$, regarding it as locally compact abelian group with addition as binary operation, one may define the above weights by replacing $|\cdot|$ by $|\cdot|_p$.
    \item  The constant weights, in particular $\omega\equiv1$, can not  be a $q-$algebra weight for any $q>1$. 
\end{enumerate}
\end{lemma}

\begin{remark} \label{irw} It is worth noting the motivation  behind  the proposed weights. 
\begin{enumerate}
    \item The  submultiplicative property on $\omega$ ensures that $L^1_\omega(G)$  is an algebra under convolution; while subconvolutive ensures  that $L^q_\omega(G)$ $(1<q< \infty)$ is  an algebra under convolution. See \cite{Ku, GroWeight2007}.
    \item The Beurling-Domar condition was discovered by Domar  in \cite{domar}, which guarantees the existence of test function with  the desired  compact support (see Theorem \ref{Domar}).  This is indispensable when we wish to employ   the  localization technique (e.g. partition of unity) in the proof of our main result. See  Lemmas \ref{lbd} and \ref{bdtobd}. 
\end{enumerate}  
\end{remark}
Let $(\cX,\|\cdot\|)$ be a commutative unital complex Banach algebra. For a $\cX-$valued strongly measurable function $f$ on $G,$  the weighted Lebesgue space norm is given by 
\[\|f\|_{L^q_\omega(G,\cX)}=\|f\|_{L^q_\omega}= \left( \int_G \|f(x)\|^q \omega(x)^q dx \right)^\frac{1}{q},\]
where $dx$ denotes the Haar measure of $G$. The space $L^1_\omega(G,\cX)$ is a complex Banach algebra with the norm $\|\cdot\|_{L^1_\omega(G,\cX)}$ and usual convolution product. The same follows for $L^q_\omega(G,\cX)$ for $q>1$ provided that $\omega$ satisfies $\displaystyle \omega^{-q'}\ast\omega^{-q'}\leq\omega^{-q'}$. 
Note that condition (a) in the definition of $q-$algebra weight is sufficient to make $L^q_\omega(G,\cX)$ an algebra as it gives submultiplicativity of the norm and they are called \textbf{\emph{Beurling algebras}}. But we also impose (b) because it along with H\"older's inequality gives the inclusion $L^q_\omega(G,\cX) \subset L^1(G,\cX)$ which in turn allows us to define Fourier transform of $f\in L^q_\omega(G,\cX)$.  The \textbf{\emph{Fourier transform}} of $f\in L^q_\omega(G,\cX) \subset L^1(G,\cX)$  is given by
$$\widehat{f}(\gamma)=\int_G f(x) \gamma(-x) dx \quad (\gamma\in\Gamma).$$ 
The collection of all Fourier transforms of functions in $L^q_\omega(G,\cX)$ is given by $$A^q_{\omega}(\Gamma,\cX)=\left\{\widehat{f}:\Gamma\to \cX :  f\in L^q_\omega(G,\cX)\right\}.$$ The space $A^q_{\omega}(\Gamma,\cX)$ is a Banach algebra with pointwise operations and norm 
$$\|\widehat{f}\|_{A^q_{\omega}(\Gamma,\cX)}=\|{f}\|_{L^q_\omega(G,\cX)}$$ for $f\in L^q_\omega(G,\cX)$ with $\omega$ being a $q-$algebra weight when $q>1$. 

\begin{remark}[Notations] \label{notation}
\noindent
\begin{enumerate}
    \item[(a)] For convenience, we shall denote a function in $A^q_{\omega}(\Gamma,\cX)$ by $f$, instead of $\widehat{f}$, while its corresponding function in $L^q_\omega(G,\cX)$ by $\widehat f$. It shall be noted that this notation is conventional and arises from the case of discrete $G$ (see \eqref{invFT}).
    
    \item[(b)] If $\omega\equiv 1$, then the corresponding spaces are denoted by $L^q(G,\cX)$ and $A^q(\Gamma,\cX)$.
    
    \item[(c)] If $\cX=\mathbb{C}$, then the corresponding spaces are denoted by $L^q_\omega(G)$ and $A^q_\omega(\Gamma)$.
\end{enumerate}
\end{remark}

To study the composition operator in the vector valued setting, we introduce the following definitions.
\begin{definition}\label{defopfns} 
Let $S$ be a set, $\cX$ and $\mathcal Y$ be Banach algebras, and let $\mathcal{A}$ and $\mathcal{B}$ be two function algebras consisting of functions from $S$ to $\cX$ and $\mathcal Y$, respectively. 
\begin{enumerate}
    \item For $E\subset \cX$ and a function $F:E\to \mathcal Y$, assign a \textbf{\emph{composition operator}} $T_F$ to $F$ given by $$T_F:f \mapsto F(f)=F\circ f,$$ where $F\circ f$ is the composition of the functions $F:E\to \mathcal Y$ and $f:S\to E$.
    \item The operator \textbf{\emph{$T_F$ maps $\mathcal{A}$ to $\mathcal{B}$}} if $T_F(f)\in\mathcal{B}$ for all $f\in\mathcal{A}$ whose range lies in $E$.
    \item  If $\cX=\mathcal Y$ and $\mathcal{A}=\mathcal{B}$, then \textbf{\emph{$T_F$ acts (operates) on $\mathcal{A}$}} if $T_F(f)\in\mathcal{A}$ for all $f\in\mathcal{A}$ whose range lies in $E$.
\end{enumerate}  
\end{definition}
\begin{definition} \label{def:realanalytic} 
Let $d\in\mathbb{N}$, $U\subset\mathbb{C}^d$ be open, and let $\cX$ be a Banach algebra.
\begin{enumerate}
    \item A function $F:U\to \cX$ is \textbf{\emph{real analytic}} in $U$ if for each $z=(x_1+iy_1,x_2+iy_2,\dots,x_d+iy_d)\in U$ if there is $\delta>0$ such that $F$ has series representation of the form  
    \begin{align*}
        F(\xi)=\sum_{\substack{n_j\in\mathbb{N}_0 \\ 1\leq j \leq 2d}} c_{n_1 n_2 \dots n_{2d}} (x_1-k_1)^{n_1} (y_1-l_1)^{n_2} \cdots (x_d-k_d)^{n_{2d-1}} (y_d-l_d)^{n_{2d}}
    \end{align*} 
    which converges absolutely in the norm of $\cX$, where $c_{n_1 n_2\dots n_{2d}}\in \cX$ for $n_1,n_2,\dots,n_{2d}\in\mathbb{N}_0=\mathbb{N}\cup\{0\}$, for all $\xi=(k_1+il_1,k_2+il_2, \dots,k_d+il_d)\in U$ with $|x_n-k_n|<\delta$ and $|y_n-l_n|<\delta$, for $n=1,2,\dots,d$.
    \item For any $E \subset \mathbb{C}^d$, $F$ is real analytic in $E$ if it is real analytic in some open set containing $E$. 
\end{enumerate}  
We refer the reader to \cite{Kran} for real analytic functions on several variables.
\end{definition}

We now briefly mention literature. Let $1\leq q<\infty$, and let $\omega$ be a ($q-$algebra, if $q>1$) weight on $\mathbb{Z}$ satisfying the GRS-condition (See Remark \ref{rem:GRS}). Then Kinani and Bouchikhi \cite{ki} showed that if $F$ is real analytic function, then $T_F$ acts on $A^q_\omega(\mathbb{T},\mathbb{C})$ and its multivariate analogue are obtained by Atef and Kinani in \cite{Kim}. Rudin \cite[Ch. 6 (6.2)]{rb}  established  similar result for locally compact abelian $\Gamma$ for $q=1$ without weight. Feichtinger, Kobayashi, and Sato \cite{HGWL1,HGWL2} has shown similar analogue for $A^q_{\omega_s}(\mathbb{R}^d,\mathbb{C})$ with $\omega_s(x)=(1+|x|^2)^\frac{s}{2}$ for $s\geq0$ with $s>d/q'$ whenever $q>1$.
We summarize them in the following theorem. 

\begin{theorem} \label{ABK}
Let $n,d\in\N$ and $1\leq q <\infty$. If $F:\mathbb C^n\to\mathbb C^n$ be a real analytic function, then 
\begin{enumerate}
    \item $T_F$ acts on $A^1(\Gamma,\mathbb{C})$ with $n=1$ and a locally compact abelian $\Gamma$, 
    \item $T_F$ acts on $A^q_\omega(\mathbb T^d,\mathbb C^n)$, where $\omega$ is a ($q-$algebra, if $q>1$) weight on $\mathbb Z^d$ which satisfies the GRS-condition, and
    \item $T_F$ acts on $A^q_{\omega_s}(\mathbb R^d,\mathbb C)$, where $n=1$ and $\omega_s(x)=(1+|x|^2)^\frac{s}{2}$ with $s\geq0$ if $q=1$ and $s>d/q'$ if $q>1$.
\end{enumerate}
\end{theorem}
The converse of Theorem \ref{ABK} is mainly studied  by Katznelson and Rudin. Helson et al. \cite{helson} showed that if a function $F:[-1,1]\to\mathbb{C}$ is such that $T_F$ acts on $A^1(\Gamma,\mathbb{C})$, then $F$ is real analytic. Moreover, for a function $F:E\to\mathbb{C}$, where $E\subset\mathbb{C}$, $T_F$ acts on $A^1(\Gamma,\mathbb{C})$ if and only if $F$ is real analytic in $E$ and $F(0)=0$ if $\Gamma$ is not compact, see \cite[Ch. 6]{rb}.  We state the classical theorem of Helson, Kahane, Katznelson \cite[p.156]{helson}. See also \cite[Theorem 6.9.2]{rb} for $A^{1}(\mathbb{T},\mathbb{C})$ which was proved in 1959, and later generalized by Rudin \cite {rud} in 1962 for $A^{q}(\Gamma,\mathbb{C})$, where $\Gamma$ is infinite compact abelian group and $1\leq q<2$.  

\begin{theorem}[Helson-Kahane-Katznelson-Rudin] \label{HKKR}
Let $1\leq q<2$, $G$ be a locally compact abelian group, and let $\Gamma$ be the dual group of $G$. Suppose that $T_{F}$ is the composition operator associated to a complex function $F$ on $\mathbb C$. If $T_{F}$ acts on $A^{1}(\Gamma,\mathbb{C})$ or maps $A^{1}(\Gamma,\mathbb{C})$ to $A^{q}(\Gamma,\mathbb{C})$ when $\Gamma$ is compact, then $F$ is real analytic on $\mathbb R^{2}.$ 
\end{theorem}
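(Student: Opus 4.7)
The plan is to follow the classical Katznelson-Rudin strategy, in three stages.

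\emph{Stage 1: Automatic continuity.} From the hypothesis that $T_F$ sends $A^1(\Gamma,\mathbb{C})$ into $A^q(\Gamma,\mathbb{C})$, one first argues that $F$ is continuous on $\mathbb{C}$: if $\Gamma$ is compact this is immediate by testing $T_F$ on constants, while in the non-compact case one uses translates of an approximate identity from $A^1(\Gamma,\mathbb{C})$. The closed graph theorem then yields the quantitative local bound
\[
\|F\circ f\|_{A^q(\Gamma,\mathbb{C})} \le M \qquad\text{whenever } \|f\|_{A^1(\Gamma,\mathbb{C})} \le \delta
\]
for some $\delta,M>0$.

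\emph{Stage 2: Localisation at a base point.} Fix $z_0\in\mathbb{C}$ and aim at real analyticity of $F$ at $z_0$. When $\Gamma$ is compact, constants belong to $A^1(\Gamma,\mathbb{C})$, and we test on $f=z_0+g$ with $\|g\|_{A^1}$ small. When $\Gamma$ is non-compact (the $q=1$ case of the theorem), constants are absent from $A^1$, so we substitute $\phi\in A^1(\Gamma,\mathbb{C})$ with $\phi\equiv z_0$ on a large compact set $K\subset G$, perturb by $g$ supported near $K$ on the group side, and record $F(\phi+g)$ only on $K$. In either setting we have reduced to probing $F$ near $z_0$ via additive perturbations $z_0+g$ with $g$ small in $A^1$.

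\emph{Stage 3: Extracting Taylor coefficients.} Write the formal expansion
\[
F(z_0+w) = \sum_{m,n\ge 0} a_{m,n}(z_0)\,\frac{w^m\,\bar w^n}{m!\,n!},\qquad a_{m,n}(z_0)=\partial_z^m\partial_{\bar z}^n F(z_0),
\]
and substitute $g=\sum_{k=1}^{N}(\epsilon_k\gamma_k+\bar\epsilon_k\bar\gamma_k)$ for a lacunary (Sidon) family of characters $\gamma_k\in\Gamma$. By lacunarity, selected Fourier coefficients of the products $g^m\bar g^n$ are supported on distinct frequencies and can therefore be read off as explicit products of the $a_{m,n}(z_0)$. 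Combining the bound from Stage 1, transported to $\ell^{q'}$-control of Fourier coefficients via the Hausdorff-Young inequality (this is where the hypothesis $q<2$ enters essentially), with Riesz-product lower estimates for those selected coefficients, yields Cauchy-type inequalities $|a_{m,n}(z_0)|\le C^{m+n}\, m!\,n!$. These are precisely the estimates characterising real analyticity of $F$ at $z_0$, and varying $z_0$ over $\mathbb{C}$ completes the proof.

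The main obstacle is the combinatorial Fourier estimate in Stage 3: constructing the lacunary test function $g$ and selecting frequencies so that, after multinomial expansion of $g^m\bar g^n$, a chosen Taylor monomial dominates, and matching the resulting lower bound against the Hausdorff-Young upper bound extracted from the $A^q$-control. A secondary technical difficulty is the non-compact case where $\Gamma$ may contain no infinite lacunary set of characters (e.g.\ $\Gamma=\mathbb{R}^d$): there the discrete lacunary system must be replaced by well-separated translates of sharply localised bumps in $A^1(\Gamma,\mathbb{C})$ that play the same role asymptotically.
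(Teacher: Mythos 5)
Your three-stage skeleton matches the classical strategy (and the proof of Theorem \ref{mbk} in this paper, which generalizes the statement; the paper itself only cites Theorem \ref{HKKR} from \cite{helson,rud}), but two steps as written would fail. First, in Stage 1 you cannot invoke the closed graph theorem: $T_F$ is nonlinear, so no open-mapping/closed-graph machinery applies. The quantitative bound $\|F\circ f\|_{A^q}\le M$ for $\|f\|_{A^1}\le\delta$ must instead be obtained by the translation-invariance argument: if $T_F$ were not locally bounded at any point of $\Gamma$, one places badly-behaved functions $f_n$ on disjoint neighborhoods, sums them into a single $f\in A^1$, and uses cutoffs (Urysohn functions in $A^1(\Gamma)$) together with $\|\phi\, F(f)\|_{A^q}\le\|\phi\|_{A^1}\|F(f)\|_{A^q}$ to reach a contradiction; a partition of unity then converts local boundedness into the uniform bound on a ball (this is the content of Lemmas \ref{lbd} and \ref{bdtobd}). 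Moreover, continuity of $F$ is not ``immediate by testing on constants'': constants only show $|F(t)|\le C$ for small $|t|$; the paper has to combine this with Theorem \ref{thm:rud 4} ($\sup_E\|\chi_E\|_{A^q}=\infty$) and a limiting argument to rule out discontinuity.

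Second, and more seriously, Stage 3 begins by writing the Taylor expansion $F(z_0+w)=\sum a_{m,n}w^m\bar w^n/(m!\,n!)$, i.e.\ it presupposes that $F$ is smooth, which is not known at that point (only continuity is available). The standard way around this — and the one used in the proof of Theorem \ref{mbk}\,\eqref{twovarq} — is to form the continuous periodic function $F_1(s)=F(r\sin s)$, which always has a Fourier series $\sum c_{mn}e^{i(ms_1+ns_2)}$, and to prove exponential decay $|c_{mn}|\le C K_q^{-|(m,n)|}$; this forces $F_1$ to extend holomorphically to a strip, giving real analyticity without ever differentiating $F$. The decay comes from $|c_{mn}|\,\|e^{i(m,n)f}\|_{A^q}\le C$ combined with the lower bound $\sup\{\|e^{if}\|_{A^q}: f \text{ real trig.\ poly.},\ \|f\|_{A^1}\le n\}\ge K_q^n$ with $K_q>1$ (Lemma \ref{sup}). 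This is also where $q<2$ enters — not through Hausdorff--Young as you claim, but because for a unimodular $g$ with spread-out spectrum Plancherel gives $\|g\|_{A^2}=\|g\|_{L^2}=1$ while $|\widehat g(x)|^q>|\widehat g(x)|^2$ forces $\|g\|_{A^q}>1$; at $q=2$ the lower bound collapses to $1$ and the whole argument dies. Your lacunary/Riesz-product idea is essentially the mechanism inside the proof of that lemma, but as organized it cannot produce Cauchy estimates for derivatives that have not been shown to exist. Finally, for non-compact $\Gamma$ the clean route is the structure theorem (restrict to an infinite compact subgroup, or to a copy of $\mathbb R$ and then to $\mathbb T$), rather than ad hoc translated bumps.
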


\subsection{Statement of the main results}
We are now ready to state  our first main result. It can be seen as a \textit{vector valued weighted generalization} of Theorem \ref{HKKR}.
\begin{theorem}\label{mbk}
    Let $1 \leq q < 2$, $G$ be a locally compact (noncompact) abelian group, $\Gamma$ be the dual of $G$, $\omega$ be an admissible  weight on $G$, and let $\cX$ be a unital commutative Banach algebra. Let $T_F$ be the composition operator associated with the function $F:I \to \cX$ for parts \eqref{vecvalq}, \eqref{vecval} or $F:I \to \mathbb{C}^2$ for parts \eqref{twovarq},\eqref{twovar}, where $I$ is a subset of $\mathbb{C}$ or $\mathbb{C}^2$, respectively. Assume that one of the following statements holds true:
    \begin{enumerate}
        \item \label{twovarq} $T_F$ maps $A^1_{\omega}(\Gamma,\mathbb{C}^2)$ to $A^q(\Gamma,\mathbb{C}^2)$,
        \item \label{vecvalq} $T_F$ maps $A^1_{\omega}(\Gamma,\mathbb{C})$ to $A^q(\Gamma,\cX)$,
        \item \label{twovar} $T_F$ acts on $A^q_{\omega}(\Gamma,\mathbb{C}^2)$,
        \item \label{vecval} $T_F$ acts on $A^q_{\omega}(\Gamma,\cX)$,
    \end{enumerate}
    where $G$ is discrete in parts \eqref{twovarq}, \eqref{vecvalq} and $\omega$ is $q-$algebra weight if $q>1$ in parts \eqref{twovar}, \eqref{vecval}.
    Then $F$ is real analytic in $I$ and $F(0)=0$ if $G$ is not discrete.
\end{theorem}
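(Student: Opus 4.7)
The plan is to reduce each of the four assertions of Theorem \ref{mbk} to the classical scalar, unweighted Helson--Kahane--Katznelson--Rudin theorem (Theorem \ref{HKKR}) through two complementary reductions: a \emph{scalar reduction} eliminating the Banach-algebra (or $\mathbb{C}^2$) target via Gelfand characters, and a \emph{weight reduction} exhibiting the HKKR test functions as elements of the weighted algebra. The first step in each case is to invoke the closed graph theorem on the relevant pair of Banach algebras, yielding an a priori continuity estimate of the form $\|F\circ f\|_{A^q}\leq \Phi(\|f\|_{A^1_\omega})$ for some continuous $\Phi$; this quantitative uniformity drives both reductions and the subsequent reassembly of $F$.

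For the scalar reduction in parts \eqref{vecvalq} and \eqref{vecval}, I would fix an arbitrary character $\chi$ in the maximal ideal space $\widehat{\cX}$. Pointwise evaluation at $\chi$ is a contractive algebra homomorphism $A^q(\Gamma,\cX)\to A^q(\Gamma,\mathbb{C})$, so the scalar composition operator $T_{\chi\circ F}=\chi\circ T_F$ inherits the hypothesis on $T_F$. Similarly in parts \eqref{twovarq} and \eqref{twovar}, coordinate projections and linear combinations of the two input components decouple the joint dependence of $F$ in its two complex variables and reduce the problem to a one-variable scalar statement.

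The weight reduction is the heart of the argument. Because $A^1_\omega\subset A^1$, the hypothesis $T_F\colon A^1_\omega\to A^q$ is formally \emph{weaker} than the unweighted $T_F\colon A^1\to A^q$ of Theorem \ref{HKKR}, so one cannot simply invoke HKKR; instead one must re-run its proof inside the weighted algebra. The HKKR argument proceeds by contrapositive: from a hypothetical non-real-analytic point of $F$ it manufactures a sequence of lacunary Fourier-type test functions $f_n$ with bounded $A^1$-norm but $\|F\circ f_n\|_{A^q}\to\infty$. The Beurling--Domar condition \eqref{BDcond} is precisely what allows one to choose the lacunary frequencies so that the weighted coefficient sum $\sum_k|c_k^{(n)}|\,\omega(\lambda_k^{(n)})$ remains bounded, placing the $f_n$ inside $A^1_\omega$ with comparable norm; a parallel construction works for $A^q_\omega$, using the $q$-algebra hypotheses (a) and (b) to control the relevant weighted $L^q$-norms. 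Combined with the continuity estimate, this yields the same contradiction as in HKKR, forcing $\chi\circ F$ to be real analytic for every $\chi$.

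The normalization $F(0)=0$ when $G$ is not discrete follows at once, because $0\in A^1_\omega(\Gamma,\mathbb{C})$, so $T_F(0)$ equals the constant function $F(0)$, and no nonzero constant belongs to $A^q(\Gamma)$ when $\Gamma$ is noncompact. Real analyticity of every $\chi\circ F$ is then promoted to $\cX$-valued real analyticity of $F$ by a Dunford-type lifting argument, using the uniform continuity of $T_F$ to control the local Taylor coefficients uniformly across characters. I expect the principal obstacle to be the weight-reduction step: engineering HKKR-type lacunary test functions inside $A^1_\omega$ or $A^q_\omega$ that still retain the delicate quantitative estimates driving the contradiction---this is exactly where admissibility of $\omega$ is indispensable. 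A secondary but nontrivial issue is the vector-valued reassembly, which must cope with the possibility that $\cX$ is not semisimple.
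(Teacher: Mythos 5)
Your plan has several genuine gaps, two of which are fatal as stated. First, the opening move --- ``invoke the closed graph theorem \ldots yielding an a priori continuity estimate'' --- is not available: $T_F(f)=F\circ f$ is a \emph{nonlinear} operator, and the closed graph theorem applies to linear maps between Banach spaces. The paper has to work for this estimate: it proves local boundedness of $T_F$ at every point of $\Gamma$ (Lemma \ref{lbd}) by a contradiction argument using disjointly supported test functions, translation invariance of $\|\cdot\|_{A^1_\omega}$ and $\|\cdot\|_{A^q}$, and Domar's weighted Urysohn lemma (Theorem \ref{Domar}); it then upgrades this to boundedness on a ball around $0$ via a partition of unity built from Theorem \ref{kb} (Lemma \ref{bdtobd}). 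This is also where the Beurling--Domar condition actually enters --- it guarantees that $A^1_\omega(\Gamma)$ contains compactly supported bump functions --- not, as you suggest, in tuning lacunary frequencies. Second, your scalar reduction via Gelfand characters cannot close: if $\cX$ is not semisimple, knowing that $\chi\circ F$ is real analytic for every $\chi\in\widehat{\cX}$ says nothing about the radical component of $F$, and no ``Dunford-type lifting'' recovers $\cX$-valued analyticity from the quotient $\cX/\mathrm{rad}(\cX)$. You flag this as a ``secondary'' issue, but it is structural. The paper sidesteps it entirely: in parts \eqref{vecvalq} and \eqref{vecval} the \emph{inputs} are scalar (identified with $\mathbb{C}1_\cX\subset\cX$), so one expands $F_1(s)=F(r\sin s)$ directly into an $\cX$-valued Fourier series and estimates the $\cX$-valued coefficients $c_{mn}$ without ever invoking characters.

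Beyond these, the core quantitative engine is left as an acknowledged obstacle rather than supplied. The paper's mechanism is: continuity of $F$ (proved via Rudin's Theorem \ref{thm:rud 4} on $\sup_E\|\chi_E\|_{A^q}=\infty$, which is where $q<2$ is used), then the coefficient bound $\|c_{mn}\|\,\|e^{i(m,n)f}\|_{A^q}\le C$ for trigonometric polynomials $f$ in the unit ball of $A^1_\omega$, and finally Lemma \ref{sup}: $\sup_{f\in S_n}\|e^{if}\|_{A^q}\ge K_q^n$ with $K_q>1$, where the weighted bound $\|f\|_{A^1_\omega}\le n$ is achieved simply by normalizing the test characters by $\omega(x_i)+\omega(-x_i)$ (no lacunary construction or BD condition needed there). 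The resulting exponential decay $\|c_{mn}\|\le CK_q^{-|(m,n)|}$ gives a convergent complexified series and hence real analyticity. Finally, your reduction targets Theorem \ref{HKKR}, which in the $A^1\to A^q$ form requires $\Gamma$ compact; parts \eqref{twovar} and \eqref{vecval} concern noncompact $\Gamma$, and the paper handles these by a separate structure-theoretic reduction (infinite compact subgroup, or a closed subgroup isomorphic to $\mathbb{R}$, hence to $\mathbb{T}$) that your proposal does not address.
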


\begin{remark}  Theorem  \ref{mbk}  deserve  several comments.
    \begin{enumerate}
    \item The range of $q$  is sharp in the sense that the same conclusion may not hold true if we take $q=2$. In fact, in this case,  the function  $F(z)=|z|$ (which is not real analytic) serves as the counter example as $A^2=L^2$. It is in this sense we may regard Theorem \ref{mbk} as the strong converse of Wiener-L\'evy theorem.
        \item  Taking $\omega \equiv 1$ and $\cX=\mathbb C$, Theorem \ref{mbk} \eqref{vecvalq}  recovers  Rudin's \cite[Theorem 5]{rud}; see Theorem \ref{HKKR}.
        
        \item  Taking $q=1$, $\omega \equiv 1$ and $\cX=\mathbb C$,  Theorem \ref{mbk} \eqref{vecval}  recovers  Helson et al. \cite[Theorems 1 and 2]{helson}; see Theorem \ref{HKKR}.
        
        \item Theorem \ref{mbk} \eqref{twovarq} and  \eqref{twovar} can be seen as a finite variable weighted analogues of Theorem \ref{HKKR}.
        \item   We have not considered the case where $G$ is compact. In fact, in  this scenario, a weight $\omega$ on $G$ is equivalent to a constant weight as weights are bounded on compact sets \cite[Lemma 1.3.3, pp. 19]{Kan}.  Hence,  Theorem \ref{mbk} follows from  Helson et al. \cite{helson}.
        \item In  parts \eqref{twovarq} and \eqref{vecvalq}, we   could  have two space indices $1$ and $q$ due to  inclusion relation on weighted $\ell^q$ spaces (see \eqref{ineqnormsdis}).   In  parts \eqref{twovar} and \eqref{vecval}, we only have one index $q$ as  if $\Gamma$ is not compact, then \eqref{ineqnormsdis} is not true.
        \item In \eqref{twovarq} and \eqref{twovar}, one can consider $\mathbb{C}^d$ $(d\in\mathbb{N})$ in place of $\mathbb{C}^2$ and get same result with appropriate modifications. Also, in them we have $F:\mathbb{C}^2\to\mathbb{C}^2$ while in Theorems \eqref{vecvalq} and \eqref{vecval}, we have $F:\C\to \cX$, where $\cX$ is a unital Banach algebra because $\mathbb{C}$ can be identified with a subalgebra of $\cX$ both having the same identity $1_\cX$ which is an essential condition in the proof which may not be true for $\mathbb{C}^d$ with $d>2$.
        \item In parts \eqref{twovarq} and \eqref{twovar}, we do not know whether we can replace $A^q(\Gamma,\mathbb{C}^2)$ by $A^q(\Gamma,\cX)$ but if we could, then Theorems \eqref{vecvalq} and \eqref{vecval} will be their special cases. But the key fact in proving these results is that $\mathbb{C}$ can be identified with a subalgebra of $\cX$ via the identification $z\in\mathbb{C}\mapsto z 1_\cX\in \cX$, where $1_\cX$ is the identity of $\cX$. So, if $\cX$ is a unital Banach algebra having $\C^2$ as its subalgebra, both of which have same identity, then we may replace $A^q(\Gamma,\mathbb{C}^2)$ by $A^q(\Gamma,\cX)$ and the more general result follows using the same techniques presented here.
    \end{enumerate}    
\end{remark}
Now we shall turn our attention to the converse of Theorem \ref{mbk}, which is weighted version of Theorem \ref{ABK} for the case of $G=\mathbb{R}^d=\Gamma$. First, we state a definition which puts further conditions on weight that is required for our result.

\begin{definition} \label{def:weight of regular growth}
A weight $\omega$ on $\mathbb{R}^d$ is of \textit{\textbf{regular growth}} if it satisfies the following for all $r\geq1$ and $x\in\mathbb{R}^d$
\begin{enumerate}
    \item[(a)] $\displaystyle \omega\left(\frac{x}{r}\right)\leq \omega(x)$, and
    \item[(b)] $\omega(rx)\leq C r^\alpha \omega(x)$ for some constant $C\geq1$ and $\alpha>0$.
\end{enumerate}
\end{definition}

Observe that the polynomial weights $\omega_s$, in particular the Japanese weight $\omega(x)=(1+|x|^2)^\frac{s}{2}$ for $s>0$, are weights of regular growth on $\mathbb{R}^d$. Note that if $\omega$ is a weight of regular growth, then condition (b) implies that $\omega(x)\leq C|x|^\alpha$ for all $|x|\geq1$, where $C=\sup_{|x|=1} \omega(x)$. Refer to \cite{reiter} for more details on weight of regular growth. We are now ready to state our next  main theorem.
\begin{theorem} \label{suff}
Let $1\leq q<\infty$, $n,d\in\N$, $\omega$ be an admissible ($q-$algebra if $q>1$) weight of regular growth on $\mathbb{R}^d$, and let $\cX$ be a unital commutative Banach algebra.
\begin{enumerate}
    \item \label{ii} If $F:I\to \cX$ $(I\subset\mathbb{C})$ is a real analytic function with $F(0)=0$,  then $T_F$ acts on $A^q_\omega(\mathbb{R}^d,\cX)$.
    \item \label{i} If $F:I\to \mathbb{C}^n$ $(I\subset\mathbb{C}^n)$ is a real analytic function with $F(0)=0$, then $T_F$ acts on $A^q_\omega(\mathbb{R}^d,\mathbb{C}^n)$.
\end{enumerate}
\end{theorem}

A variant  of Theorem \ref{suff}  is  obtained in \cite[Theorem 1.6.17]{reiter} for a weight of regular growth and $q=1$. Feichtinger, Kobayashi and Sato  \cite[Theorem 1.5]{HGWL1}  extended it for  $1\leq q<\infty$   with Japanese weight $\omega_s$ (cf. \eqref{eq:aq2} below and \cite[Theorem 5.1]{HGWL2}). It is worth noting the key difference between these results and Theorem \ref{suff}: In \cite{HGWL1, HGWL2}, the considered functions are in $f\in A^1_\omega(\mathbb{R}^d)$ whose range is contained in a compact subset of $\mathbb{R}^d$ with $\omega$ a weight of regular growth or real valued functions  with Japanese weight $f \in A^q_{\omega_s}(\mathbb{R}^d)$, and $F$ being  analytic  on a neighborhood of $f(\mathbb{R}^d)\cup\{0\}.$ While  Theorem \ref{suff} encompasses all $f \in A^q_{\omega}(\mathbb{R}^d)$ whose range lies in the domain of a real analytic function $F$ and  $\omega$ is a weight of regular growth.

\begin{remark} We do not know whether  Theorem \ref{suff} can be extended to arbitrary $\Gamma$ and not only $\mathbb{R}^d$ as the techniques presented here used the scaling property in $\mathbb{R}^d$. And it is the reason we consider the weights of regular growth which behaves ``nicely" with scaling.
\end{remark}

\subsection{Applications to time-frequency spaces} 
 \subsubsection{Time-frequency spaces}The  second aim  of this paper is to study  composition operators  on  weighted time-frequency  spaces on  $\mathbb R^d$. There  has been a great deal of interest to study dispersive PDE with  Cauchy data in  these low regularity spaces (e.g. modulation, Wiener amalgam,  Fourier amalgam or $\alpha-$modulation spaces). See \cite{Tbenyi, HG4, rsw, moT, nonom, sugimoto2007dilation, tribel}.  Surprisingly,  though composition operators (nonlinear operations)  plays a crucial role in understanding the dynamics of dispersive PDE, our  knowledge on it   is rather limited. See \cite{bhimani2016functions,SugimotoNOPJFA,Sugimoto2011, db, BhimaniITSP}. 

  To state our results, we briefly recall  these spaces.  In 1983, Feichtinger \cite{HG4} introduced a class of Banach spaces, which allow a measurement of space  and Fourier transform variables of a function or distribution $f$ on $\mathbb R^d$ simultaneously using the STFT, the so-called modulation spaces. The \textbf{\emph{short-time Fourier transform (STFT)}} of a function (tempered distribution) $f \in \mathcal{S}'(\mathbb R^d)$ with respect to a non-zero window $\phi \in \mathcal{S}(\mathbb R^d)$ (Schwartz space) is  given by 
\begin{align*} \label{stft}
V_{\phi}f(x,t)= \int_{\mathbb R^{d}} f(w) \overline{\phi(w-x)} e^{-2\pi i t\cdot w} dw, \  (x, t) \in \mathbb R^{2d}
\end{align*} whenever the integral exists. For $1\leq p, q \leq \infty$ and a weight $\omega$ on $\R^d$, the \textbf{\emph{(weighted) modulation spaces}} $M^{p,q}_\omega(\mathbb R^d)$ is defined by the norms:
\[\|f\|_{M^{p,q}_{\omega}}= \left\| \|V_\phi f(x,t)\|_{L_x^p} \omega(t)\right\|_{L_t^q}.\]
 By reversing the order of integration we get another family of spaces, so-called \textit{\textbf{(weighted) Wiener amalgam spaces}} $W_\omega^{p,q} (\mathbb R^{d}),$ which is equipped with the norm
\[\|f\|_{W^{p,q}_{\omega}}= \left\| \|V_\phi f(x,t) \omega (t)\|_{L_t^q} \right\|_{L_x^p}.\]
See also Remark \ref{eqnorm}. The first appearance of amalgam spaces dates back to the work of  Wiener \cite{NW26,NW} in his study of generalized harmonic analysis, where the  amalgam space $W(L^p, \ell^q)=W(L^p, \ell^q) (\R^d)$ is defined by the norm
\[\|f\|_{W(L^p, \ell^q)}= \left( \sum_{n \in \mathbb Z^d} \left( \int_{n+(0,1]^d} |f(x)|^p dx \right)^{q/p} \right)^{1/q}.\]
In the 1980s, Feichtinger \cite{Fei} introduced  a generalization of amalgam spaces.  This enables a vastly wider range of Banach spaces of  functions  or distributions defined on  locally compact  group to be used as a local or global component, resulting in a deep and powerful theory.  Specifically,  he used the notation $W(B,C)$ to define a space of functions  or distributions which are ``locally in  Banach space $B$" and ``globally in Banach space $C$", and called them\textit{ \textbf{Wiener amalgam  type}} spaces. In order to define these spaces precisely we briefly introduce notations.  For any given function $f$ which is locally in $B$  (that is,  $gf\in B$, $\forall g \in C_0^{\infty}(\R^d)),$ we set $f_{B}(x)= \|f g(\cdot -x)\|_{B}$, for some nonzero $g\in C_0^\infty(\rd)$. The space $W(B,C)$ is defined as the space of all functions $f$ locally in $B$ such that  $f_{B}\in C$. The space $W(B, C)$ endowed with the norm  $\|f\|_{W(B, C)}=\|f_{B}\|_{C}.$  Moreover, different choices of nonzero $g\in C^{\infty}_0(\R^d)$  generate the same space and yield equivalent norms, see \cite[Theorem 1]{Fei} and \cite[Proposition 11.3.2]{CH2002}. For an expository introduction to Wiener amalgam spaces with extensive references to the original literature, we refer to \cite{CH2002, CH2007}.

In this paper we consider the  Fourier image of  a particular Wiener amalgam spaces $W(L^p, \ell^q_\omega)$,  which is  known as the  \textit{\textbf{Fourier amalgam spaces}}  $\mathcal{F}W(L^p, \ell^q_\omega)(\R^d)=\widehat{w}_\omega^{p,q} (\mathbb R^{d}).$ Specifically, for  $1\leq p, q \leq \infty$ and a weight $\omega$ on $\Z^d,$ $\mathcal{F}W(L^p, \ell^q_\omega)-$spaces are defined by the norm 
\[\|f\|_{\widehat{w}^{p,q}_\omega}=\|f\|_{\mathcal{F}W(L^p, \ell^q_\omega)}= \bigl\|\|\chi_{n+(0,1]^d} (\xi)\mathcal{F}f(\xi)\|_{L_{\xi}^p(\R^d)} \omega(n)  \bigr\|_{\ell_n^q(\mathbb Z^d)}.\]

Since we consider the spaces above only on $\mathbb{R}^d$, we shall avoid writing it for convenience. That is, we shall simply write $M^{p,q}_\omega$ in place of $M^{p,q}_\omega(\mathbb{R}^d)$ and the same applies to other two spaces.

\subsubsection{Composition operators on time-frequency spaces}
Now we briefly review the literature. Bhimani and Ratnakumar \cite{bhimani2016functions} have completely understood composition operator on $M^{1,1}(\mathbb R^d).$ Later, Kobayashi-Sato \cite[Theorem 1.1]{ks} and Bhimani \cite{db}  generalized this result for  
 $M^{p,1}(\mathbb R^d)$  for $1<p<\infty$. These results are inspired from the classical theorems of   Wiener-L\'evy  and  Helson-Kahane-Katznelson-Rudin. We may summarize them in the following theorem.
 
\begin{theorem}[\cite{bhimani2016functions,ks,db}] \label{bhksdb}
Let $1\leq p< \infty, 1\leq q <2,$  and let 
$$(X,Y)=(M^{p,1},M^{p,q}) \ \text{or} \ (W^{p,1},W^{p,q}).$$
Then $T_F$ maps $X$ to $Y$ if and only if $F$ is real analytic on $\mathbb R^2$ and $F(0)=0$.
\end{theorem}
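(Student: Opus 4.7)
The plan is to treat the two implications separately. The forward direction, $F$ real analytic with $F(0)=0$ implies $T_F:X\to Y$, will follow the Wiener--L\'evy pattern already developed in \cite{bhimani2016functions, ks, db}: expand $F(u,v)$ in its convergent power series about the origin in $(\Re z,\Im z)$, use that $M^{p,1}(\R^d)$ is a Banach algebra under pointwise multiplication, and, since $F(0)=0$, sum $\sum c_{\alpha,\beta}f^{\alpha}\bar f^{\beta}$ in $M^{p,1}$; a localization / Wiener-lemma argument handles $f$ of large norm, and the inclusion $M^{p,1}\hookrightarrow M^{p,q}$ for $q\ge 1$ puts the result into $Y$. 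The case $(W^{p,1},W^{p,q})$ is parallel.

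For the converse the plan is to pull back the hypothesis $T_F:M^{p,1}\to M^{p,q}$ to a composition-operator statement on the Fourier algebra of $\td$, so that Theorem \ref{mbk}\eqref{vecvalq} with $G=\zd$ and trivial weight $\omega\equiv 1$ applies. Fix $\psi\in C_c^\infty(\R^d)$ with $\psi\equiv 1$ on an inner cube $Q_1=[-N+1,N-1]^d$ and $\supp\psi\subset Q=[-N,N]^d$. Given $h\in A^1(\td,\C)$, extend it periodically to $\tilde h$ on $\R^d$ and define $\Phi(h)=\psi\,\tilde h$. The local-$\F L^p$/global-$\ell^1$ characterization of $M^{p,1}$ yields a bounded embedding $\Phi:A^1(\td,\C)\to M^{p,1}(\R^d,\C)$. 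Thus $F(\psi\tilde h)=T_F(\Phi(h))\in M^{p,q}(\R^d,\C)$ by hypothesis. On $Q_1$ this function coincides with the $\zd$-periodic function $F(\tilde h)$, while on $\R^d\setminus Q$ it equals the constant $F(0)$. Since no nonzero constant lies in $M^{p,q}(\R^d)$ for $1\le p<\infty$ and $q<\infty$ (its STFT is constant in $x$ and would have infinite $L^p_x$-norm), one reads off $F(0)=0$ immediately. The periodic piece, identified via the amalgam characterization, then shows $T_F(h)\in A^q(\td,\C)$, so $T_F$ maps $A^1(\td,\C)$ into $A^q(\td,\C)$. Theorem \ref{mbk}\eqref{vecvalq} produces real analyticity of $F$ on $\C\simeq\R^2$. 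The pair $(W^{p,1},W^{p,q})$ is handled by the same scheme after swapping the local and global components of the Wiener amalgam.

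The main obstacle will be the two quantitative embeddings that drive the reduction: boundedness of $\Phi:A^1(\td)\to M^{p,1}(\R^d)$, and of the ``restriction-to-one-period'' map $M^{p,q}(\R^d)\to A^q(\td)$. Both rest on the Gabor/atomic decomposition of modulation spaces, and one must verify that the constant-at-infinity piece and the periodic-on-$Q_1$ piece decouple in the $M^{p,q}$ norm, so that the periodic part alone inherits $A^q$ regularity. Once these two embeddings are available, the appeal to Theorem \ref{mbk} is routine and the rest of the work is bookkeeping.
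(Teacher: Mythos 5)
Your proposal is correct and follows essentially the same route as the cited sources and as the paper's own weighted analogue: the sufficiency via the power-series/algebra/localization argument for $M^{p,1}$ (resp.\ $W^{p,1}$), and the necessity via periodization and cutoff to reduce to a composition operator from $A^1(\mathbb{T}^d)$ to $A^q(\mathbb{T}^d)$ (this is precisely the mechanism of Proposition \ref{promaps} together with Proposition \ref{B1}), followed by the Helson--Kahane--Katznelson--Rudin theorem, i.e.\ Theorem \ref{mbk}\eqref{vecvalq} with $\omega\equiv 1$. The only cosmetic difference is your derivation of $F(0)=0$ from the constant-at-infinity tail; the paper obtains it more directly by noting that $T_F(0)$ is the constant function $F(0)$, which lies in $M^{p,q}$ with $p<\infty$ only if it vanishes.
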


On the other hand, Sugimoto, Tomita and Wang \cite{Sugimoto2011} studied composition operator on weighted modulation spaces  $M^{p,2}_\omega=M^{p,2}_s$ with  the Japanese (polynomial) weight $\omega_s(\xi)= (1+|\xi|^2)^{s/2}$ and obtained some non-analytic results (cf. \cite{BhimaniITSP}). Later, Kato, Sugimoto and Tomita   \cite{SugimotoNOPJFA} also established non-analytic result. Specifically, they showed that if $F\in C^{\infty}(\mathbb R)$ and $F(0)=0$, then  $T_F$ acts on $M^{p,q}_s(\mathbb R) \ (1\leq p < \infty, 4/3\leq q < \infty, s>1/q'),$ (cf. \cite[Chapter 5]{kassob}). While  Reich, Reissig and Sickel  \cite{Reich2016,Sickel2016} also  studied  composition operator in  modulation spaces with quasi-analytic regularity. Biswas  and Manna \cite{MannaAffine}  established the analogue of  Theorem \ref{bhksdb} in the affine modulation spaces.  Kobayashi and Sato  \cite{Kobayashi2022, KobayashiPSDOP}  established some discrete  analogue of this in $A^1_s(\mathbb T)$ and  so in some modulation spaces.

Taking these considerations into account and keeping the fact that  increasing  interest  of weights in the analysis of PDE, we are inspired to study the weighted analogue of Theorem \ref{bhksdb}.  In fact, we could also include Fourier amalgam spaces, which was recently used in understanding the dynamics of  nonlinear Schr\"odinger equations, see \cite{forlano2020deterministic,BhiStrongIll}.  Specifically, our  second main result is the  following. 
\begin{theorem}\label{tfam}
Let $1\leq p \leq  \infty$, $1\leq q <2$, $\omega$ be admissible weight on $\mathbb{R}^d$ (on $\zd$ for Fourier amalagam space), and let $$(X,Y)=\left(M_{\omega}^{p,1},M^{p,q}\right) \quad \text{or} \quad \left(W_{\omega}^{p,1},W^{p,q}\right) \quad \text{or} \quad  \left(\mathcal{F}W(L^p, \ell^1_\omega), \mathcal{F}W(L^p, \ell^q)\right).$$ Let  $T_{F}$ be the composition operator associated to a complex function $F:I \to \mathbb{C}$, where $I \subset \mathbb C$. Then
\begin{enumerate}
\item \label{nc1}  If  $T_{F}$ maps $X$ to $Y$, then $F$ must be real analytic in $I$. Moreover, $F(0)=0$ if $p<\infty.$
\item  \label{sc}  If $p,q \in[1,\infty)$, $\omega$ is an admissible ($q-$algebra, if $q>1$) weight of regular growth and $F$ is real analytic in $I$ which takes origin to itself, then $T_{F}$  acts on $M_{\omega}^{p,q}$, $W_{\omega}^{p,q}$ and $\mathcal{F}W(L^p, \ell^q_\omega)$.
\end{enumerate}
\end{theorem}

\begin{corollary}\label{corapp} Let $1\leq p \leq\infty$, $1\leq q<2$, $\omega$ be an admissible weight of regular growth which is $q-$algebra weight for $q>1$, and let $X=M_{\omega}^{p,q}$, $W_{\omega}^{p,q}$ or $\mathcal{F}W(L^p, \ell^q_\omega).$ Then
\begin{enumerate}
    \item $T_{F}$ acts on $X$ if and only if  $F$ is real analytic on $\mathbb R^2$ and $F(0)=0.$
    \item   there exists $f\in M^{p,1}_{\omega} $ such that (power-type nonlinearity) $|f|^{\alpha}f \notin M^{p,q}$ for $\alpha \in (0, \infty)\setminus 2\mathbb N$.
\end{enumerate}
\end{corollary}
The power type $|u|^{\alpha}u$ and exponential type $e^{u|u|}-1$ nonlinearities are ubiquitous in  PDE. See for e.g. the  excellent survey article by Ruzansky-Sugimoto-Wang \cite{ruzhansky2012modulation}. In fact, it is known that the algebra property of  is importance in PDE, and it was in  this context they had raised open question \cite[Question 7.1]{ruzhansky2012modulation}: Does 
\begin{eqnarray}\label{wop}
 \||u|^{\alpha}u\|_{M^{p,1}}\leq \|u\|_{M^{p,1}}^{\alpha+1} \quad (\alpha \in (0, \infty)\setminus 2 \mathbb N)   
\end{eqnarray}
holds for all $u \in M^{p,1}$? The authors in  \cite{bhimani2016functions, ks, db}   resolved this question in the unweighted case.  We would like to point out that   Corollary  \ref{corapp}  reveals that  estimate \eqref{wop}  cannot be hold be true  even in the weighted modulation spaces. As a consequence, this shed light  to the standard approach  employed to study well-posedness theory for dispersive PDE in the realm of \textit{weighted} modulation spaces.  

\begin{remark}  Theorem \ref{tfam}  deserves several comments.
\begin{enumerate}
\item A variant of  Theorem \ref{tfam} \eqref{sc} with Japanese weight has been obtained by Feichtinger, Kobayashi and Sato in \cite[Theorem 5.1]{HGWL2}.
\item The modulation  and Wiener amalgam spaces can be defined on general locally compact abelian group, refer to \cite{feichtinger1983modulation}. The analogue of Theorem \ref{tfam} should hold true for more general locally compact abelian  group  with appropriate modifications.  However, here for the sake of clarity and keeping the applications in PDE in mind, we restrict ourself to the Euclidean space.
\item Theorem  \ref{tfam} can also be shown for $\C^d$ valued and Banach algebra valued distributions using proper modifications and using Theorem \ref{mbk} \eqref{twovarq} and Theorem \ref{mbk} \eqref{vecvalq} as per the case.

%\item  As in Theorem \ref{mbk}, the conditions on range of $q\in[1, 2)$ in Theorem \ref{tfam} \eqref{nc1} is also sharp in the sense that if we take $q=2$ then the same conclusion may not hold. For e.g., $W^{2,2}(\mathbb R^d)= L^2(\mathbb R^d),$ if $T_F: W^{p,1}(\mathbb R^d) \to W^{p,2}(\mathbb R^d),$ then $F$ may not be real analytic on $\mathbb R^2$.
  
\end{enumerate}
\end{remark}

\subsection{Proof techniques and novelties}
The proof technique for Theorem \ref{mbk} is inspired mainly by the work of Katznelson \cite{helson} and Rudin \cite{rud}.  Here, first we show the result for discrete $G$ and then using group structure theory extend it to locally compact $G$.  To  establish  the proof of Theorem \ref{mbk},  the following key steps are in order:
\begin{itemize}
    \item[--] $T_F$ is locally bounded at each point of $\Gamma$. See Definition \ref{def:lbd} and Lemma \ref{lbd}. In fact, at this stage  we  shall invoke   Beurling-Domar condition effectively. See Remark  \ref{irw}. 
    \item[--] $T_F$ maps a certain neighborhood of 0 in $A^1_\omega(\Gamma,\cX)$ to a bounded set in $A^q(\Gamma,\cX)$. See Lemma \ref{bdtobd}. This step involves the idea of   partition of unity together with the aid of generalized  Wiener's  lemma due to  Bochner and Phillips (Theorem  \ref{kb}). As a consequence, we shall get  the  continuity of $F$.
    \item[--]  To establish the  real analyticity  of $F$, we shall need technical Lemma \ref{sup}. The restriction on  $q$ comes due to this step, see Remark \ref{whyq<2}.  The idea is to get the sufficient decay of the Fourier coefficient of $F_1(s)= F(r\sin s)$, see \eqref{def:F_1}, \eqref{fsf} and \eqref{fsfdecay}.  This will allow us to  extend the $F_1$ holomorphically  on the horizontal strip (containing  the  real line), which will eventually gives the real analycity of $F$.
\end{itemize}
Now we shall briefly  comment on Lemma \ref{sup}, which   gives the crucial  lower bound of the norm $\|e^{inf}\|_{A^q}$ for trigonometric polynomial $f$ form a sphere in $A^q_{\omega}$. To this end, we have to improve the existing similar lower bound of Katznelson and  Rudin as we are working with the smaller  weighted space $A^q_{\omega}$. The difficulty rises due to the weight, for instance,  the norm in $L^q_\omega(G,\cX)$ is not translation invariant. We  resolve this by choosing appropriate function with the aid of adjusted weight. We note that the  technique of Katznelson fails in our setup, however, we  could implement  some of the key ideas developed by Rudin \cite{rud}. As mentioned earlier, the one of the key fact was $\C$ can be seen as subalgebra of $\cX$ and it was used for this Lemma \ref{sup}. The other key idea in our approach is to use the facts that $A^q_\omega(\Gamma,\mathbb{C})\subset A^q_\omega(\Gamma,\cX)$ and all the weighted spaces are contained in $A^1(\Gamma,\cX)$.

The technique used to prove Theorem \ref{tfam} \eqref{nc1} traces back to the work in \cite{bhimani2016functions, ks, db}. The key idea is to translate the problem from the Euclidean space to the torus (see Proposition \ref{promaps}). To this end, we identify time-frequency spaces on torus with Fourier algebra; see Appendix \ref{apd} and Proposition \ref{B1}.

For the sufficiency parts Theorems \ref{suff} and \ref{tfam} \eqref{sc}, the main aspect is to use the algebra structure of the spaces in consideration along with the classical result of Domar (Theorem \ref{Domar} and Remark \ref{Domarvector}). We shall combine the ideas presented in \cite{rud, reiter} to achieve our results. As stated earlier, these results are obtained only for $\mathbb{R}^d$ as we shall make use of the scaling argument that cannot be employed for arbitrary $G$. Since scaling argument is used, we also restrict ourselves to a smaller class of weights, namely weights of regular growth. 
\subsection{Further remarks}\label{rem:openque}
The proposed weights in our main results are worthy of multiple comments and naturally prompt interesting open questions.

\begin{remark} \label{rem:GRS}
    In recent literature, a weight $\omega$ is admissible if it satisfies the \emph{Gelfand-Raikov-Shilov (GRS) condition} $$\lim_{n\to\infty} \omega(nx)^\frac{1}{n}=1 \quad (x\in G).$$ Note that all weights which satisfy BD-condition also satisfies GRS-condition. But the converse is not true. For example, if $\displaystyle \omega(x)=e^\frac{|x|}{\log(e+|x|)}$ for $x\in\mathbb{R}$, then it satisfies GRS-condition but not BD-condition. 
\end{remark}

\begin{remark}
     The weighted analogues of Wiener's theorem for $G=\mathbb{Z}^d$ has been established in \cite{Bhatt,BhDeDa,PAD} and the  references therein. While the vector valued analogues in \cite{Bo,kb,kb2}  for admissible weights with GRS-condition and not BD-condition. But, here we consider admissible weights to satisfy BD-condition because Domar \cite{domar} showed that the algebra $A^1_\omega(\Gamma)$ contains functions with compact support if and only if $\omega$ satisfies BD-condition, and this fact is required in our proofs. We do not know whether we can replace BD-condition with GRS-condition here.
\end{remark}

\begin{remark}
  Wiener and L\'evy theorems for $G=\mathbb{Z}^d$ has been generalized for larger class of weights, which satisfy the GRS-condition, see \cite{PAD,Kim,Bhatt,BhDeDa,ki,kb,kb2} and references there in. It shall be noted that then even Wiener's theorem may not be true for $A^1_\omega(\mathbb{T})$ if $\omega$ does not satisfy GRS-condition. For example, consider the weight $\omega(n)=e^{|n|}$ $(n\in\mathbb{Z})$ which is not GRS as $\lim_{n\to\infty} \omega(n)^\frac{1}{n}=e$. Then the function $f(z)=2-z$ is in $A^1_{\omega}(\mathbb{T})$ and $f(z)\neq 0$ for all $z\in\mathbb{T}$ but $\frac{1}{f}$ is not in $A^1_\omega(\mathbb{T})$, or equivalently, $T_F$ does not acts on $A^1_{\omega}(\mathbb{T})$ for the analytic function $F(z)=\frac{1}{z}$.   
\end{remark}

\begin{remark} The spaces $A^q(\Gamma,\cX)$, $M^{p,q}_\omega(\R^d)$, $W^{p,q}_\omega(\R^d)$ and $\widehat{w}^{p,q}_\omega(\R^d)$ are not algebra in general if $q>1$ but becomes an algebra when we consider $\omega$ to be a $q-$algebra weight (see Proposition \ref{proalgebra}). So, an analogue of L\'evy's theorem follows for them for analytic function, that is, if $F:\mathbb C \to \mathbb C$ is real entire function\footnote{Notice  $F$ is real analytic everywhere on $\mathbb R^2$, does not imply that $F$ is real
entire. See \cite[Remark 3.9]{bhimani2016functions}.}, then $T_F$ acts on $M_\omega^{p,q}$, $W_\omega^{p,q}$ and $\widehat{w}_\omega^{p,q}$ whenever $\omega$ is an admissible $q-$algebra weight. See \cite[Theorem 3.9]{bhimani2016functions}. %These spaces with Japanese weight $\langle \cdot \rangle^s$, which is in fact a weight of regular growth, has been studied extensively. The reader may refer to \cite{Tbenyi, HG4, rsw, moT, nonom, sugimoto2007dilation, tribel}.
\end{remark}

\begin{remark}\label{eqnorm}  
    The definitions of modulation and amalgam spaces are independent of the choice of the window $\phi$, in the sense that different window functions yield equivalent modulation space norms (cf. \cite[Proposition 11.3.2 (c), p.233]{GroWeight2007}) and equivalent Wiener amalgam space norms.  Note that 
$W^{p,q}_{\omega}=W(\mathcal{F}L^q_\omega,L^p)$ and $M^{p,q}_{\omega}=W(\mathcal{F}L^p,L^q_\omega)$.   There is also an equivalent definition of modulation spaces  via frequency-uniform decomposition operators; which is quite similar in the spirit of Besov spaces. See \cite[Proposition 2.1]{wang2007global}, \cite{feichtinger1983modulation}.
\end{remark}

The organization of the article is as follows: In Section \ref{sec:pre}, basic definitions and known results required are provided. Section \ref{sec:prilem} is dedicated to lemmas required to prove Theorem \ref{mbk}. In Section \ref{sec:nec} and Section \ref{sec:suff}, we derive the necessary (Theorem \ref{mbk} and Theorem \ref{tfam} \eqref{nc1}) and sufficient conditions (Theorem \ref{suff} and Theorem \ref{tfam} \eqref{sc}), respectively.  

\section{Preliminaries} \label{sec:pre}
\noindent
\textbf{Notations}. The notation $A \lesssim B $ means $A \leq cB$ for a some constant $c > 0 $, whereas $ A \asymp B $ means $c^{-1}A\leq B\leq cA $ for some $c\geq 1$. The symbol $A_{1}\hookrightarrow A_{2}$ denotes the continuous embedding  of the topological linear space $A_{1}$ into $A_{2}$. For $1<q<\infty$, $q'$ will be its conjugate index, that is, $\frac{1}{q}+\frac{1}{q'}=1$. 

First, we give brief details about the Banach algebras. A normed linear space $(\cX,\|\cdot\|)$ over $\mathbb{C}$ is a \textit{\textbf{(complex) normed algebra}} if $\cX$ is an algebra and the norm on $\cX$ is submultiplicative, that is, $\|xy\|\leq \|x\|\|y\|$ for all $x,y\in\cX$. We say a normed algebra $\cX$ is
\begin{itemize}
    \item \textbf{\textit{unital}} if there is an element $1_\cX\in\cX$ such that $1_\cX x=x=x1_\cX$ for all $x\in\cX$. The element $1_\cX$ is called the \textbf{\textit{unit or identity}} of $\cX$.
    \item \textbf{\textit{commutative}} if $xy=yx$ for all $x,y\in\cX$.
    \item \textbf{\textit{Banach}} if it is complete in the topology induced by the norm $\|\cdot\|$.
\end{itemize}
All the algebras considered here are complex algebras except when stated otherwise. Throughout the paper, if not mentioned, $\cX$ is a unital commutative complex Banach algebra with unit element denoted by $1_\cX$, for simplicity, and the norm on $\cX$ will be denoted by $\|\cdot\|$. 

A subset $\cX_1\subset\cX$ is a \textbf{\textit{subalgebra}} of $\cX$ if $\cX_1$ is itself an algebra with same operations as of $\cX$. 
\begin{remark}\label{CsubalgebraX}
Note that if $\cX_1$ is unital subalgebra of $\cX$, then it is not necessary that both have same unit. Take for example $\mathbb{C}\times\{0\}$ as a subalgebra of $\mathbb{C}^2$ . Then unit element of $\mathbb{C}^2$ is $1_\cX=(1,1)$ but that of $\mathbb{C}\times\{0\}$ is $1_{\cX_1}=(1,0)$. But we can always identify $\mathbb{C}$ as unital subalgebra of given unital complex algebra $\cX$ both having the same unit by the map $\mathbb{C}\ni z\mapsto 1_\cX z\in\cX$.
\end{remark}

\begin{lemma} \label{lem:weighted Fourier algebras inclusions}
Let $1\leq q<\infty$, $\omega_1$ and $\omega_2$ be ($q-$algebra, if $q>1$) weights on $G$ such that $\omega_2\leq K \omega_1$ for some $K>0$, and let $\cX_1$ be a subalgebra of $\cX$.
    \begin{enumerate}
        \item $A^q_{\omega_1}(\Gamma,\cX_1) \hookrightarrow A^q_{\omega_2}(\Gamma,\cX) \hookrightarrow A^1(\Gamma,\cX)$  with the norm inequality
        \begin{align} \label{ineqnorms}
        \|f\|_{A^1(\Gamma,\cX)} \lesssim  \|f\|_{A^q_{\omega_2}(\Gamma,\cX_1)} \leq K \|f\|_{A^q_{\omega_1}(\Gamma,\cX)}
        \end{align}
        \item  If $1\leq p < q <\infty$, $G$ is a discrete abelian group, then we have $A^p_{\omega_1}(\Gamma,\cX) \hookrightarrow A^q_{\omega_1}(\Gamma,\cX) \hookrightarrow A^q_{\omega_2}(\Gamma,\cX)$ with
        \begin{align} \label{ineqnormsdis} 
        \|f\|_{A^q_{\omega_2}} \leq  K \|f\|_{A^q_{\omega_1}} \leq  K \|f\|_{A^p_{\omega_1}}.
        \end{align}
    \end{enumerate}
\end{lemma}

If $\Gamma$ is compact, the Fourier transform is invertible in the sense that given $f\in A^q_\omega(\Gamma,\cX)$, its corresponding function $\widehat{f}\in\ell^q_\omega(G,\cX)$ is given by 
\begin{equation} \label{invFT}
    \widehat{f}(x)=\int_\Gamma f(\gamma) \gamma(x) d\gamma \quad (x\in G),
\end{equation}
where $d\gamma$ is the normalized Haar measure on $\Gamma$ (cf. Remark \ref{notation}). This also allows us to take $\omega\equiv1$. Now, if $f\in A^1_\omega(\Gamma,\cX_1)$ and $g\in A^q(\Gamma,\cX)$, then by H\"olders inequality, we have
\begin{equation} \label{Holder relation}
    \|fg\|_{A^q}\leq \|f\|_{A^1} \|g\|_{A^q} \leq \|f\|_{A^1_\omega} \|g\|_{A^q}.
\end{equation}

Lastly, we state some theorems that will be used here.

\begin{theorem}[Urysohn's lemma] \label{ury} \cite[p. 131]{fol}
Let $X$ be a normed space, $V$ be an open subset of $X$, and let $K\subset V$ be a compact set. Then there exists a continuous function $f:X\to[0,1]$ such that $\supp(f)\subset V$ and $f(x)=1$ for all $x\in K$.
\end{theorem}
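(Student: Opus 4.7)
The plan is to give a direct constructive proof using distance functions, which works because every normed space $X$ is a metric space, in a particularly explicit way. First I would reduce to the case $V \neq X$ (else $f \equiv 1$ works) and set $\eta := d(K, V^c) = \inf\{\|x-y\| : x \in K, \, y \in V^c\}$. The key preliminary fact is that $\eta > 0$: this follows from compactness of $K$ together with closedness of $V^c$ and the disjointness $K \cap V^c = \emptyset$, via the standard argument that a minimizing pair of sequences would have, by compactness, a convergent subsequence in $K$ whose limit would be forced to lie in $V^c$.

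Next, I would carve out an open buffer $W := \{x \in X : d(x, K) < \eta/2\}$, so that $K \subset W$ and $\overline{W} \subset \{x : d(x,K) \leq \eta/2\} \subset V$, the last inclusion holding because $d(x, K) \leq \eta/2 < \eta$ forbids $x \in V^c$. The desired function is then the explicit Urysohn quotient
\[
f(x) \;:=\; \frac{d(x,\, X \setminus W)}{d(x,\, X \setminus W) + d(x, K)}.
\]
Well-definedness reduces to checking that the denominator is everywhere positive: on $W$ the first term is positive since $W$ is open, and off $W$ the first term vanishes but the second is positive because $K$ (being compact, hence closed) is contained in $W$, so $x \notin W$ forces $x \notin K$ and thus $d(x,K) > 0$.

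The remaining verifications are routine and I would only mention them briefly. Continuity of $f$ is inherited from continuity of $y \mapsto d(y, A)$ for any subset $A \subset X$; the range lies in $[0,1]$ by inspection of the quotient; $f \equiv 1$ on $K$ since $d(x,K) = 0$ there; and $f$ vanishes on $X \setminus W$, whence $\supp(f) \subset \overline{W} \subset V$.

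I do not anticipate a genuine obstacle here: in a normed (and hence metric) space Urysohn's lemma reduces to distance-function bookkeeping, side-stepping the more delicate inductive nested-open-set construction needed in a general normal topological space. The one place where compactness of $K$ enters essentially is the positivity $d(K, V^c) > 0$; without it two disjoint closed sets in a metric space can have zero distance and the buffer construction collapses, so this is the only step that would deserve a careful write-up in the full proof.
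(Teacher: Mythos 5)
Your proof is correct. Note that the paper does not actually prove this statement---it is quoted as a known preliminary with a citation to Folland, where the argument is the general normal-space (or locally compact Hausdorff) version of Urysohn's lemma built from a dyadically indexed nested family of open sets. Your route is genuinely different and, in this setting, strictly more elementary: since a normed space is metric, the explicit quotient $f(x)=d(x,X\setminus W)/\bigl(d(x,X\setminus W)+d(x,K)\bigr)$ with the buffer $W=\{x: d(x,K)<\eta/2\}$ does all the work, and you correctly isolate the single nontrivial input, namely $\eta=d(K,V^{c})>0$, which is exactly where compactness of $K$ is indispensable (two disjoint closed sets in a metric space can have zero distance). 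All the verifications you list go through: the denominator never vanishes, $f$ is continuous as a quotient of Lipschitz functions, $f\equiv 1$ on $K$, and $\{f\neq 0\}\subset W$ gives $\supp(f)\subset\overline{W}\subset V$. Two cosmetic points you may wish to add in a full write-up: the degenerate cases $K=\emptyset$ (take $f\equiv 0$) and $V=X$ (take $f\equiv 1$, as you note) should be dispatched first so that $\eta$ and $X\setminus W$ are well defined; and it is worth observing that, unlike Folland's locally compact version, your $f$ does not have compact support---but the statement only asks for $\supp(f)\subset V$, which is what the paper uses.
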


\begin{theorem}[$C^\infty-$Urysohn's lemma] \label{Cury} \cite[p. 245]{fol}
Let $K\subset \mathbb{R}^d$ be a compact set, and let $V$ be an open set containing $K$. Then there exists a function $f\in C_c^\infty(\mathbb{R}^d)$, set of infinitely differentiable functions having compact support, such that $0\leq f\leq1$, $\supp(f)\subset V$ and $f(x)=1$ for all $x\in K$.
\end{theorem}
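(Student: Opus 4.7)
The plan is to prove the $C^\infty$-Urysohn lemma by the standard mollification trick: sandwich $K$ between two slightly larger open sets whose closures still lie in $V$, and mollify the characteristic function of the inner one.

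First, I would exploit the compactness of $K$ and openness of $V$ to extract a positive safety distance. If $V = \mathbb{R}^d$ take any $\delta > 0$; otherwise let $\delta = \tfrac{1}{3}d(K, V^{c}) > 0$, which is positive since $K$ is compact, $V^{c}$ is closed, and the two are disjoint. Set
\[
K_{r} = \{x \in \mathbb{R}^{d} : d(x,K) \leq r\}, \qquad r > 0,
\]
so that $K \subset K_{\delta} \subset K_{2\delta} \subset V$, with $K_{2\delta}$ compact.

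Next, I would fix a standard mollifier: pick $\phi \in C_{c}^{\infty}(\mathbb{R}^{d})$ with $\phi \geq 0$, $\supp(\phi) \subset B(0, \delta/2)$, and $\int_{\mathbb{R}^{d}} \phi(y)\, dy = 1$ (e.g. a rescaled bump constructed from $e^{-1/(1-|y|^{2})}$). Then define
\[
f(x) = (\chi_{K_{\delta}} * \phi)(x) = \int_{\mathbb{R}^{d}} \chi_{K_{\delta}}(x - y)\phi(y)\, dy.
\]
Standard properties of convolution with a compactly supported $C^{\infty}$ function immediately give $f \in C^{\infty}(\mathbb{R}^{d})$, and since $0 \leq \chi_{K_{\delta}} \leq 1$ with $\phi \geq 0$ and $\int \phi = 1$, we get $0 \leq f \leq 1$.

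The remaining step is to check the boundary behaviour, which is where one must be careful with the radii. For $x \in K$ and any $y \in B(0, \delta/2)$ we have $d(x - y, K) \leq |y| < \delta$, so $\chi_{K_{\delta}}(x - y) = 1$ and hence $f(x) = \int \phi = 1$. Conversely, if $d(x, K) > 3\delta/2$, then for every $y \in \supp(\phi)$ we have $d(x - y, K) > \delta$, so $\chi_{K_{\delta}}(x - y) = 0$ and $f(x) = 0$; therefore $\supp(f) \subset K_{3\delta/2} \subset K_{2\delta} \subset V$. The only genuine obstacle is bookkeeping with the three radii $\delta/2, \delta, 3\delta/2$ to make sure the chain $K \subset K_{\delta}^{\circ}$ on one end and $\supp(f) \subset V$ on the other both close up; choosing $\supp(\phi) \subset B(0, \delta/2)$ as above handles both simultaneously, completing the proof.
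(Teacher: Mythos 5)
Your argument is correct: the paper does not prove this lemma but only cites it from Folland, and your mollification of $\chi_{K_\delta}$ with a bump supported in $B(0,\delta/2)$ is precisely the standard proof given there, with the radii $\delta/2$, $\delta$, $3\delta/2$, $2\delta$ chosen consistently so that $f\equiv 1$ on $K$ and $\supp(f)\subset K_{2\delta}\subset V$. No gaps.
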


Next, we state a result by Domar which can be seen a version of Urysohn's lemma for the weighted Fourier algebra. 

\begin{theorem} [Domar] \label{Domar} \cite[Lemma 1.24 with Theorem 2.11]{domar}
Let $G$ be a locally compact abelian group, $\Gamma$ be its dual, and let $\omega$ be a weight on $G$ satisfying Beurling-Domar condition \eqref{BDcond}. If $U$ and $K$ are open and compact subset of $\Gamma$, respectively, with $K\subset U$, then there is $f\in A^1_\omega(\Gamma,\mathbb{C})$ such that $0\leq f \leq 1$, $\supp(f)\subset V$ and $f(\gamma)=1$ for all $\gamma\in K$.
\end{theorem}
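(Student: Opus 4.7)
The plan is to reduce Domar's lemma to a fundamental construction of compactly supported functions in $A^1_\omega(\Gamma)$ provided by the BD condition, and then promote this local existence statement to the global Urysohn-type plateau property by topological and algebraic manipulations.

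The first and deepest step is Domar's existence theorem itself: under \eqref{BDcond}, for every open neighborhood $W$ of any point $\gamma_0 \in \Gamma$ there exists a nonzero, nonnegative $\phi \in A^1_\omega(\Gamma,\mathbb{C})$ with $\supp(\phi) \subset W$. I would prove this by reducing, via the Pontryagin structure theorem, to the case $G = \mathbb{R}$. Here one constructs an entire function of exponential type (a Riesz-type product) whose $L^1_\omega(\mathbb{R})$-norm is majorized by an infinite product whose logarithm is comparable to $\sum_n \log\omega(nx)/n^2$, so that convergence is guaranteed precisely by \eqref{BDcond}. The Fourier transform of this function is supported in an interval of arbitrarily small length. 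Nonnegativity is then arranged by replacing $\phi$ with $|\phi|^2$, which remains in $A^1_\omega(\Gamma,\mathbb{C})$ because this algebra is closed under pointwise multiplication and complex conjugation.

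The second step is to assemble these bumps into a single function that is strictly positive on $K$ and supported in $U$. Using local compactness of $\Gamma$, I would sandwich $K$ between two open precompact sets $V_1, V_2$ with $K \subset V_1 \subset \overline{V_1} \subset V_2 \subset \overline{V_2} \subset U$. Cover the compact set $K$ by finitely many small open sets $W_1, \ldots, W_N$ with $\overline{W_i} \subset V_1$, and for each $i$ invoke Step 1 (after translation by $\gamma_i$, which multiplies the $A^1_\omega$-norm by at most $\omega(\gamma_i) < \infty$) to obtain nonnegative $\phi_i \in A^1_\omega(\Gamma,\mathbb{C})$ supported in $V_2$ with $\phi_i > 0$ on a neighborhood of $\gamma_i$. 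After suitable placement, the finite sum $g = \sum_i \phi_i$ satisfies $g \in A^1_\omega(\Gamma,\mathbb{C})$, $\supp(g) \subset V_2 \subset U$, and $g \ge c > 0$ on $K$.

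The remaining step is the main obstacle: one must convert $g$ into a function taking the exact value $1$ on $K$ while remaining in $[0,1]$ and in $A^1_\omega(\Gamma,\mathbb{C})$. A naive smooth cutoff composition $F \circ g$ will not work, precisely because of the Wiener--L\'evy converse phenomenon (Theorem \ref{HKKR}) that motivates this paper: only real analytic $F$ are guaranteed to preserve $A^1_\omega(\Gamma,\mathbb{C})$, and no real analytic $F$ can simultaneously satisfy $F(0) = 0$ and $F \equiv 1$ on an interval $[c, \|g\|_\infty]$. Domar's resolution is to iterate the bump construction: refine the cover of $K$ at a geometric sequence of scales and sum up carefully renormalized bumps, applying the BD series $\sum \log\omega(nx)/n^2 < \infty$ once more to keep the aggregated $A^1_\omega$-norm finite; the limiting object is the desired plateau. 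Equivalently, one may invoke the abstract principle that BD makes $A^1_\omega(\Gamma,\mathbb{C})$ a \emph{regular} commutative Banach algebra of functions on $\Gamma$, and then the Urysohn property is a standard consequence of regular-algebra machinery (Shilov-type arguments), applied to the compact set $K$ and its complement of $U$.
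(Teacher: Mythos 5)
The paper does not prove this statement at all: it is imported verbatim as a citation to Domar (\cite[Lemma 1.24 with Theorem 2.11]{domar}), so there is no in-paper argument to compare against. Judged on its own, your sketch has the right overall shape --- the genuine content is your Step 1 (the BD condition \eqref{BDcond} forces $A^1_\omega(\Gamma,\mathbb{C})$ to contain nonzero nonnegative elements with arbitrarily small compact support), and everything after that is soft. Three points deserve correction or sharpening. First, translation in $\Gamma$ does \emph{not} cost a factor $\omega(\gamma_i)$: it corresponds to multiplying $\widehat{f}$ by a unimodular character of $G$, so $\|\cdot\|_{A^1_\omega}$ is exactly translation invariant (the paper itself verifies this inside Lemma \ref{lbd}); moreover $\omega$ lives on $G$, so $\omega(\gamma_i)$ is not even defined. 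Second, passing from $\phi$ to $|\phi|^2$ uses closure of $A^1_\omega(\Gamma,\mathbb{C})$ under conjugation, which on the group side reads $\int |\widehat\phi(-x)|\,\omega(x)\,dx<\infty$ and hence requires $\omega(-x)\lesssim\omega(x)$; this is harmless for all the examples in the paper but should be stated, or avoided by building $\phi$ real and even from the start.

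Third, your final step is where the argument is thinnest. The regular-algebra/Shilov machinery does produce $f$ with $f\equiv 1$ on $K$ and $\supp(f)$ in the prescribed open set, but it does not by itself deliver the pointwise constraint $0\le f\le 1$, and your ``iterate the bumps at geometric scales'' description is too vague to check. A cleaner closing move, once Step 1 is in hand, is: take $g\in A^1_\omega(\Gamma,\mathbb{C})$ with $g\ge 0$, $\supp(g)\subset W$ for a small symmetric neighborhood $W$ of the identity of $\Gamma$, normalized so that $\int_\Gamma g\,d\gamma=1$, and set $f=g\ast_\Gamma \chi_{KW}$. Then $0\le f\le 1$, $f\equiv 1$ on $K$, $\supp(f)\subset KW^2\subset U$ for $W$ small, and $f\in A^1_\omega(\Gamma,\mathbb{C})$ because its transform is $\widehat g\cdot\widehat{\chi_{KW}}$ with $\widehat{\chi_{KW}}\in L^\infty(G)$, so $\|\widehat f\|_{L^1_\omega}\le \|\chi_{KW}\|_{L^1(\Gamma)}\|\widehat g\|_{L^1_\omega}$. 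This makes the only nontrivial input the small-support bump of Step 1, which is exactly what BD buys. (Separately, note the statement as printed has a typo: $\supp(f)\subset V$ should read $\supp(f)\subset U$.)
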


We state a remark that will be used several times in this paper. 
\begin{remark} \label{Domarvector} 
We can easily extend the above result of Domar for vector valued Fourier algebra. Let $\cX$ be a unital Banach algebra. Then with all details as above in Theorem \ref{Domar} just consider the function $\widetilde{f}=f1_\cX$. This gives $\widetilde f\in A^q_\omega(\Gamma,\cX)$ such that $0\leq \|\widetilde f(\gamma)\| \leq 1$ $(\gamma\in\Gamma)$, $\supp(\widetilde f)\subset V$ and $\widetilde f(\gamma)=1_\cX$ for all $\gamma\in K$. 
\end{remark}

The vector valued $p-$power analogues of Wiener's theorem are obtained in \cite{kb}. Using it techniques along with the classical result of Bochner and Phillips \cite[Theorem 3 and 4]{Bo}, we get the following result.

\begin{theorem}[Wiener-Bochner-Phillips] \label{kb}
Let $1\leq q<\infty$, $G$ be a discrete abelian group, $\Gamma$ be the (compact) dual group of $G$, and let $\omega$ be an admissible $q-$algebra weight on $G$. Let $\cX$ be a unital Banach algebra, and let $f \in A^q_\omega(\Gamma,\cX)$. If $f(z)$ is invertible for all $z \in \Gamma$, $\frac{1}{f}\in A^q_\omega(\Gamma,\cX)$.
\end{theorem}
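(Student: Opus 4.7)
The plan is to invoke Gelfand's theorem for the commutative unital Banach algebra $A^q_\omega(\Gamma,\cX)$. Via the Fourier transform, this algebra is isomorphic to $L^q_\omega(G,\cX)$ under convolution, and $L^q_\omega(G,\cX)$ is a commutative unital Banach algebra with unit $1_\cX \delta_0$ by the $q$-algebra property of $\omega$. I will identify its maximal ideal space with $\Gamma \times M(\cX)$, where $M(\cX)$ denotes the character space of $\cX$. Once this is done, the hypothesis that $f(z) \in \cX$ is invertible for every $z \in \Gamma$, combined with Gelfand's theorem for $\cX$, gives $\chi(f(z)) \neq 0$ for every $\chi \in M(\cX)$ and every $z \in \Gamma$; hence every character of $A^q_\omega(\Gamma, \cX)$ is non-zero on $f$, and a second application of Gelfand's theorem yields the inverse of $f$ in $A^q_\omega(\Gamma, \cX)$.

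The core step is the character identification. Let $\phi$ be a character of $L^q_\omega(G, \cX)$, and set $\chi(a) := \phi(a \delta_0)$ for $a \in \cX$ and $\xi(x) := \phi(1_\cX \delta_x)$ for $x \in G$. The convolution identity $(a \delta_x) \ast (b \delta_y) = (ab) \delta_{x+y}$ together with multiplicativity of $\phi$ show that $\chi \in M(\cX)$, that $\xi(x+y) = \xi(x)\xi(y)$ with $\xi(0) = 1$, and that $\phi(a \delta_x) = \chi(a)\xi(x)$ for all $a \in \cX$, $x \in G$. The bound $|\xi(nx)| = |\phi(1_\cX \delta_{nx})| \leq \|1_\cX\|\,\omega(nx)$, combined with the BD admissibility of $\omega$ (which implies the GRS condition $\omega(nx)^{1/n} \to 1$), forces $|\xi(x)| \leq 1$; applying the same estimate to $-x$ gives $|\xi(x)| = 1$. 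Hence $\xi$ is a continuous unitary character of $G$, i.e.\ $\xi(x) = z(-x)$ for a unique $z \in \Gamma$.

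For a finitely supported $h = \sum_{x \in F} a_x \delta_x \in L^q_\omega(G, \cX)$, linearity and $\phi(a \delta_x) = \chi(a) \xi(x)$ give
\[
\phi(h) \;=\; \sum_{x \in F} \chi(a_x)\, z(-x) \;=\; \chi\!\left(\sum_{x \in F} a_x\, z(-x)\right) \;=\; \chi\bigl(\widehat{h}(z)\bigr).
\]
Since the finitely supported functions are dense in $L^q_\omega(G, \cX)$ and the map $h \mapsto \chi(\widehat{h}(z))$ is bounded on $L^q_\omega(G,\cX)$ (by H\"older, using $\sum_x \omega(x)^{-q'} < \infty$ from the $q$-algebra condition when $q > 1$, and trivially using $\omega \geq 1$ when $q = 1$), the identity extends to all $h$. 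Transferring back via the Fourier transform, every character of $A^q_\omega(\Gamma, \cX)$ has the form $f \mapsto \chi(f(z))$ for some $(z, \chi) \in \Gamma \times M(\cX)$, completing the character identification.

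The main obstacle is this character identification, especially the step that $\xi$ must be unitary; it depends critically on the BD (hence GRS) condition on $\omega$, without which the spectral radius of $1_\cX \delta_x$ in $L^q_\omega(G, \cX)$ could exceed $1$, allowing non-unitary characters $\xi$ and producing a maximal ideal space strictly larger than $\Gamma \times M(\cX)$.
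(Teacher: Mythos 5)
Your argument is correct and follows exactly the route the paper indicates: the paper offers no details beyond the remark that the result ``follows from the Gelfand theory of Banach algebras,'' and your character identification of the maximal ideal space of $L^q_\omega(G,\cX)$ with $\Gamma\times M(\cX)$ (using submultiplicativity, the GRS consequence of the BD condition to force $|\xi|=1$, and condition (b) for the density/boundedness step) is precisely the standard way to carry that out. No gaps.
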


The following theorem is due to Rudin and is useful in proving the continuity of $F$ in Theorem \ref{mbk}.

\begin{theorem}\cite[Theorem 4]{rud} \label{thm:rud 4}
    Let $1\leq q <2$, $G$ be a discrete abelian group with compact dual $\Gamma$, and let $\chi_{_E}$ denote the characteristic function of $E\subset \Gamma$. If $S$ is the set of all closed subsets of $\Gamma$, then $\sup_{E\in S} \|\chi_{_E}\|_{A^q}=\infty$.
\end{theorem}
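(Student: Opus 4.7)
The plan is to argue by contradiction: assume $C:=\sup_{E\in S}\|\chi_E\|_{A^q}<\infty$ (note that $G$ must be infinite for the conclusion to hold, so I assume this). The strategy is to upgrade the hypothesis to a continuous inclusion $C(\Gamma)\hookrightarrow A^q(\Gamma)$, dualise it to a uniform bound on the $L^1$-norm of certain trigonometric polynomials, and then defeat that bound using a Rademacher average of characters.

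Step 1 (layer cake). For $f\in C(\Gamma)$ with $0\le f\le 1$, the pointwise identity $f(\gamma)=\int_0^1 \chi_{\{f\ge t\}}(\gamma)\,dt$ combined with Fubini gives $\widehat f(x)=\int_0^1 \widehat{\chi_{\{f\ge t\}}}(x)\,dt$ for every $x\in G$. Each superlevel set $\{f\ge t\}$ is closed by continuity of $f$, so Minkowski's integral inequality in $\ell^q(G)$ yields
\[
\|f\|_{A^q}=\|\widehat f\|_{\ell^q(G)}\le \int_0^1 \|\chi_{\{f\ge t\}}\|_{A^q}\,dt\le C.
\]
Splitting a general $f\in C(\Gamma)$ with $\|f\|_\infty\le 1$ into positive/negative and real/imaginary parts yields the continuous inclusion $C(\Gamma)\hookrightarrow A^q(\Gamma)$ of operator norm at most $4C$.

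Step 2 (duality). Since the Fourier transform is an isometric isomorphism $A^q(\Gamma)\cong \ell^q(G)$, we have $A^q(\Gamma)^*\cong \ell^{q'}(G)$, while $C(\Gamma)^*=M(\Gamma)$ by Riesz. The transpose of the inclusion from Step 1 is a bounded map $\ell^{q'}(G)\to M(\Gamma)$ of norm $\le 4C$; a direct computation identifies it, on a finitely supported $h\in\ell^{q'}(G)$, with the absolutely continuous measure of density $P_h(\gamma):=\sum_x h(x)\gamma(x)$ with respect to Haar measure. Since this measure has total variation $\|P_h\|_{L^1(\Gamma)}$, we obtain
\[
\|P_h\|_{L^1(\Gamma)}\;\le\;4C\,\|h\|_{\ell^{q'}(G)}\qquad\text{for every finitely supported }h\in\ell^{q'}(G).
\]

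Step 3 (Rademacher + Khintchine, and the main obstacle). Pick $N$ distinct $x_1,\dots,x_N\in G$ (possible since $G$ is infinite), signs $\epsilon_j\in\{-1,+1\}$, and set $h_\epsilon:=\sum_{j=1}^N \epsilon_j\delta_{x_j}$; then $\|h_\epsilon\|_{\ell^{q'}}=N^{1/q'}$ and the terms of $P_{h_\epsilon}(\gamma)=\sum_j\epsilon_j\gamma(x_j)$ are unimodular. Khintchine's inequality for complex scalars gives $\mathbb{E}_\epsilon|P_{h_\epsilon}(\gamma)|\ge c\sqrt{N}$ for an absolute $c>0$, so Fubini yields $\mathbb{E}_\epsilon\|P_{h_\epsilon}\|_{L^1(\Gamma)}\ge c\sqrt{N}$; hence some choice of signs satisfies $c\sqrt{N}\le 4C\,N^{1/q'}$, i.e.\ $N^{1/q-1/2}\le 4C/c$, which fails for large $N$ since $q<2$. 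The main obstacle I foresee is Step 2: one must unwind the Fourier isomorphism carefully and verify that the functional induced on $C(\Gamma)$ by $h\in \ell^{q'}(G)$ is represented by integration against $P_h$ with $C(\Gamma)$-dual norm \emph{exactly} $\|P_h\|_{L^1(\Gamma)}$. Once this is in hand, the layer-cake step and the probabilistic bound via Khintchine are routine.
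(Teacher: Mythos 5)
Your proof is correct: the layer-cake representation $f=\int_0^1\chi_{\{f\ge t\}}\,dt$ does upgrade a uniform bound on closed sets to the embedding $C(\Gamma)\hookrightarrow A^q(\Gamma)$, the duality step correctly identifies the transpose with $h\mapsto P_h\,d\gamma$ of total variation $\|P_h\|_{L^1(\Gamma)}$, and the Khintchine estimate $c\sqrt{N}\le 4C\,N^{1/q'}$ is indeed violated for large $N$ precisely because $q<2$ (the remaining measurability point for Minkowski's integral inequality is routine, via monotonicity of $t\mapsto\chi_{\{f\ge t\}}$). Note that the paper does not prove this statement but imports it as Theorem 4 of Rudin's 1962 paper; your argument is essentially the classical one underlying that result (reduce to the nonexistence of a bounded inclusion $C(\Gamma)\subset A^q(\Gamma)$ for $q<2$ and defeat it with random trigonometric polynomials), so there is no substantive divergence to report.
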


The following result is due to Feichtinger and we state it in the form required.

\begin{theorem} \cite[Corollary 2]{Fei} \label{coralgebra}
Let $1\leq q<\infty$, $G$ be a locally compact abelian group, and let $\omega$ be a weight on $G$ which satisfies $\omega^{-q'}\ast\omega^{-q'}\leq \omega^{-q'}$ for $q>1$. Then $W(L^p(G),L^q_\omega(G))$ is a Banach ideal in $W(L^1(G),L^q_\omega(G))$. In particular, $W(L^p(G),L^q_\omega(G))$ is a Banach algebra with convolution for all $1\leq p \leq \infty$.
\end{theorem}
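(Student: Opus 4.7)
The plan is to invoke Feichtinger's general convolution relation for Wiener amalgam spaces: if $B_1,B_2,B$ are Banach function spaces on $G$ with $B_1\ast B_2\hookrightarrow B$ and $C_1,C_2,C$ are solid translation-invariant Banach spaces on $G$ with $C_1\ast C_2\hookrightarrow C$ (both with continuous inclusion), then
\[
W(B_1,C_1)\ast W(B_2,C_2)\hookrightarrow W(B,C).
\]
I would apply this with $B_1=L^1(G)$, $B_2=B=L^p(G)$, and $C_1=C_2=C=L^q_\omega(G)$. The statement then reads $W(L^1,L^q_\omega)\ast W(L^p,L^q_\omega)\hookrightarrow W(L^p,L^q_\omega)$, which is precisely the ideal property.

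For the local ingredient $L^1\ast L^p\hookrightarrow L^p$ with norm one, I would quote Young's inequality. For the global ingredient $L^q_\omega\ast L^q_\omega\hookrightarrow L^q_\omega$, I would use the $q$-algebra weight hypothesis: writing $\omega(x)|f\ast g(x)|$ under the integral and factoring the integrand as $(|f(y)|\omega(y))\cdot(|g(x-y)|\omega(x-y))\cdot\omega(x)\omega(y)^{-1}\omega(x-y)^{-1}$, submultiplicativity makes the last factor at most $\omega(y)^{-1}\omega(x-y)^{-1}\cdot\omega(y)\omega(x-y)=1$ only pointwise, so that crude estimate is not sharp enough. Instead, the correct route is to split $\omega^{-q'}(y)^{1/q'}\omega^{-q'}(x-y)^{1/q'}$ via H\"older in $y$ and then use condition (a) in the definition of $q$-algebra weight, namely $\omega^{-q'}\ast\omega^{-q'}\leq\omega^{-q'}$, together with Minkowski's integral inequality to conclude $\|f\ast g\|_{L^q_\omega}\leq\|f\|_{L^q_\omega}\|g\|_{L^q_\omega}$.

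To derive the ``in particular'' clause, I note that on any relatively compact set H\"older gives $L^p\hookrightarrow L^1$ locally, hence $W(L^p,L^q_\omega)\hookrightarrow W(L^1,L^q_\omega)$ as a continuous embedding (this uses that the amalgam norm is obtained with a compactly supported window $g\in C_c^\infty(\R^d)$, so $\|fg(\cdot-x)\|_{L^1}\lesssim\|fg(\cdot-x)\|_{L^p}$ with a constant depending only on $\supp g$). Thus for $f_1,f_2\in W(L^p,L^q_\omega)$ we view $f_1$ as sitting in $W(L^1,L^q_\omega)$ and apply the ideal property to obtain $f_1\ast f_2\in W(L^p,L^q_\omega)$ with $\|f_1\ast f_2\|_{W(L^p,L^q_\omega)}\lesssim\|f_1\|_{W(L^p,L^q_\omega)}\|f_2\|_{W(L^p,L^q_\omega)}$, establishing the Banach algebra structure. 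The case $p=\infty$ is covered by the same argument since $L^\infty\hookrightarrow L^1_{\mathrm{loc}}$ on bounded sets.

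The main obstacle, and the heart of the matter, is the global convolution estimate $L^q_\omega\ast L^q_\omega\hookrightarrow L^q_\omega$: this is where the precise formulation of ``$q$-algebra weight'' (condition (a)) is indispensable, and one has to be careful because for $q>1$ mere submultiplicativity of $\omega$ is insufficient. The transfer of the local Young inequality to the amalgam setting via the BUPU machinery underlying Feichtinger's convolution theorem is essentially bookkeeping once the two ingredients above are in place.
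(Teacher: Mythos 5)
Your argument is correct, but note that the paper itself offers no proof of this statement: it is quoted verbatim as \cite[Corollary 2]{Fei}, so there is nothing internal to compare against. What you have written is essentially the derivation of Feichtinger's Corollary 2 from his general convolution theorem for amalgams (the ``Convolutions'' property $W(B_1,C_1)\ast W(B_2,C_2)\subset W(B_3,C_3)$ that the paper also records), specialized to $B_1=L^1$, $B_2=B_3=L^p$, $C_1=C_2=C_3=L^q_\omega$, and your two ingredient estimates are the right ones. Three small points. First, in the global estimate $L^q_\omega\ast L^q_\omega\hookrightarrow L^q_\omega$ you do not need Minkowski's integral inequality: after H\"older in $y$ with exponents $q,q'$ and condition (a) in the form $\bigl(\omega^{-q'}\ast\omega^{-q'}(x)\bigr)^{1/q'}\leq\omega(x)^{-1}$, one raises to the $q$-th power and applies Fubini, which immediately gives $\|f\ast g\|_{L^q_\omega}\leq\|f\|_{L^q_\omega}\|g\|_{L^q_\omega}$. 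Second, the case $q=1$ is not covered by condition (a) (the paper only defines $q$-algebra weights for $q>1$); there submultiplicativity of $\omega$ alone yields $L^1_\omega\ast L^1_\omega\hookrightarrow L^1_\omega$, so you should treat it separately, as you implicitly do for the local Young inequality. Third, the theorem is stated for a general locally compact abelian $G$, so the window realizing the amalgam norm should be taken in $C_c(G)$ (or one should work with a bounded uniform partition of unity) rather than $C_c^\infty(\R^d)$; the local embedding $L^p(K)\hookrightarrow L^1(K)$ on the compact support of the window, with constant $|K|^{1/p'}$, still gives $W(L^p,L^q_\omega)\hookrightarrow W(L^1,L^q_\omega)$ and hence the Banach algebra property from the ideal property, exactly as you argue.
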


%Recalling $W(B, C)$ the space of functions or distributions which are “locally in B” and “globally in C”, we get following properties which follow immediately from \cite{HG4};
%\emph{Inclusions}. If $B_1 \hookrightarrow B_2$ and $C_1 \hookrightarrow C_2$, then $W(B_1, C_1) \hookrightarrow W(B_2, C_2)$.
%\emph{Pointwise multiplications}. If $B_1 \cdot B_2 \subset B_3$ and $C_1 \cdot C_2 \subset C_3$, then $W(B_1,C_1) \cdot W(B_2,C_2) \subset W(B_3, C_3)$.
%\emph{Convolutions}. If $B_1 \ast B_2 \subset B_3$ and $C_1 \ast C_2 \subset C_3$, then $W(B_1,C_1) \ast W(B_2,C_2) \subset W(B_3,C_3)$.
%Let $1\leq p \leq \infty$, $1\leq q <\infty$, and let $\omega$ be an admissible ($q-$algebra, if $q>1$) weight on $\mathbb{R}^d$. Then the following embedding follows by the H\"older's inequality, \begin{align}\label{emb} M^{p,q}_\omega \hookrightarrow M^{\infty,1} \hookrightarrow L^\infty, \quad  W^{p,q}_\omega \hookrightarrow W^{\infty,1} \hookrightarrow L^\infty. \end{align}

\begin{proposition}[Algebra property]\label{proalgebra}
Let $p,p_{i} \in [1, \infty]$, $q,q_{i} \in [1, \infty]$, $(i=0,1,2)$  satisfy $\frac{1}{p_1}+ \frac{1}{p_2}= \frac{1}{p_0}$ and $\frac{1}{q_1}+\frac{1}{q_2}=1+\frac{1}{q_0}$, and let $\omega$ be a weight on $\mathbb{R}^d$. Then
\begin{enumerate}
    \item \label{ams} $M^{p_1, q_1}_\omega \cdot M^{p_{2}, q_{2}}_\omega \hookrightarrow M^{p_0, q_0}_\omega$ with norm inequality $$\|f g\|_{M^{p_0, q_0}_\omega}\lesssim \|f\|_{M^{p_1, q_1}_\omega} \|g\|_{M^{p_2,q_2}_\omega}.$$

    \item \label{aws} $W^{p_1, q_1}_\omega \cdot W^{p_{2}, q_{2}}_\omega \hookrightarrow W^{p_0, q_0}_\omega$ with  norm inequality $$\|f g\|_{W^{p_0, q_0}_\omega}\lesssim \|f\|_{W^{p_1, q_1}_\omega} \|g\|_{W^{p_2,q_2}_\omega}.$$
    
    \item \label{algebra} If $1\leq q<\infty$ and $\omega$ satisfies $\omega^{-q'}\ast\omega^{-q'}\leq \omega^{-q'}$ for $q>1$, then $M_\omega^{p,q}$, $W_\omega^{p,q}$ and $\widehat{w}_\omega^{p,q}$ are Banach algebras with pointwise operations.
    
    \item \label{mwaminc} $W^{p,q_{1}}_\omega \hookrightarrow L^{p}_\omega \hookrightarrow W^{p,q_{2}}_\omega$ and $M^{p,q_{1}}_\omega \hookrightarrow L^{p}_\omega \hookrightarrow M^{p,q_{2}}_\omega$ holds for $q_{1}\leq \text{min} \{p, p'\}$ and $q_{2}\geq \text{max} \{p, p'\}$ with $\frac{1}{p}+\frac{1}{p'}=1.$
    
    \item \label{faminc} $\widehat{w}_\omega^{p,q}$ is an $A^1_\omega-$module, that is, $\|fg\|_{\widehat{w}_\omega^{p,q}} \leq \|f\|_{A^1_\omega} \|g\|_{\widehat{w}_\omega^{p,q}}$.
\end{enumerate}
\end{proposition}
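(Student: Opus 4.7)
The technical core for parts (1) and (2) is the STFT product–convolution identity
\begin{equation*}
V_\phi(fg)(x,t)=\bigl(V_{\phi_1}f(x,\cdot)\ast V_{\phi_2}g(x,\cdot)\bigr)(t),
\end{equation*}
valid whenever the windows satisfy $\phi=\phi_1\phi_2$; it converts pointwise products on the time side into convolution of STFTs in the frequency variable. Since different windows produce equivalent modulation and amalgam norms, I may freely choose convenient $\phi,\phi_1,\phi_2$. For (1), I apply Minkowski's integral inequality together with H\"older in the position variable (yielding $\tfrac{1}{p_1}+\tfrac{1}{p_2}=\tfrac{1}{p_0}$) and Young's convolution in the frequency variable (yielding $\tfrac{1}{q_1}+\tfrac{1}{q_2}=1+\tfrac{1}{q_0}$), the weight being transported through the Young step via submultiplicativity $\omega(t)\le\omega(s)\omega(t-s)$. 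Part (2) is essentially identical but with the two norms arranged in opposite order, so that Young is applied pointwise in $x$ first and H\"older second.

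For (3), when $1<q<\infty$ and $\omega$ is a $q$-algebra weight, the defining conditions $\omega^{-q'}\ast\omega^{-q'}\le \omega^{-q'}$ and $\omega^{-q'}\in L^1$ make $L^q_\omega(\rd)$ (respectively $\ell^q_\omega(\zd)$) a convolution Banach algebra and additionally yield the embedding $M^{p,q}_\omega\hookrightarrow L^\infty$ via the STFT inversion formula combined with H\"older in the frequency variable. Rerunning the scheme of (1)--(2) with $p_1=p_2=p_0=p$ and $q_1=q_2=q_0=q$, using this convolution algebra inequality in place of the generic Young bound, produces the algebra property for $M^{p,q}_\omega$ and $W^{p,q}_\omega$. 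For $\widehat{w}^{p,q}_\omega$ the claim is equivalent under the Fourier transform to the convolution algebra property of $W(L^p,\ell^q_\omega)$, which is precisely Theorem~\ref{coralgebra}. Part (4) follows from the endpoint Hausdorff--Young bound in the frequency variable of the STFT at $q=\min\{p,p'\}$ and $q=\max\{p,p'\}$, together with the monotonicity of $L^q$-norms in the frequency slot.

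Finally, (5) reduces under Fourier transform to the convolution embedding $L^1_\omega\ast W(L^p,\ell^q_\omega)\hookrightarrow W(L^p,\ell^q_\omega)$, a direct application of the amalgam convolution rule stated right after Theorem~\ref{coralgebra}, used with $L^1\ast L^p\subset L^p$ for the local components and $\ell^1_\omega\ast\ell^q_\omega\subset \ell^q_\omega$ (by submultiplicativity of $\omega$) for the global ones. The principal obstacle throughout is the careful bookkeeping of the weight at the Young-type step: submultiplicativity of $\omega$ is sufficient for the general product estimates (1)--(2), whereas the stronger $q$-algebra condition is essential for the self-product estimates in (3). Once these weighted convolution bounds are secured, all five assertions follow from essentially one unified computational scheme.
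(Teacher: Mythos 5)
Your proposal is essentially sound and, for parts \eqref{ams}, \eqref{aws}, \eqref{mwaminc} and \eqref{faminc}, it follows the same route the paper takes --- the paper simply defers to the references (\cite{bao}, \cite{ambenyi}, \cite{E1}, \cite{moT}) and to Young's inequality, and the arguments in those references are precisely the STFT product--convolution identity $V_{\phi_1\phi_2}(fg)(x,\cdot)=V_{\phi_1}f(x,\cdot)\ast V_{\phi_2}g(x,\cdot)$ followed by H\"older in $x$ and weighted Young in $t$ (with the weight carried by submultiplicativity), respectively the amalgam convolution rule $W(L^1,\ell^1_\omega)\ast W(L^p,\ell^q_\omega)\subset W(L^p,\ell^q_\omega)$. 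So you have in effect written out the proof the paper only cites. For part \eqref{algebra} your route genuinely differs from the paper's: you rerun the direct STFT computation using the Beurling-algebra property $L^q_\omega\ast L^q_\omega\subset L^q_\omega$ of the $q$-algebra weight, whereas the paper deduces the algebra property from Feichtinger's convolution relations for Wiener amalgam type spaces, $W(\mathcal{F}L^p,L^q_\omega)\ast W(\mathcal{F}L^p,L^q_\omega)\subset W(\mathcal{F}L^p,L^q_\omega)$, together with Theorem \ref{coralgebra}, and then transfers the result by the Fourier transform via $M^{p,q}_\omega=\mathcal{F}W^{p,q}_\omega$. Both approaches are legitimate; yours is more self-contained, the paper's leans on the abstract amalgam machinery.

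There is one concrete slip you should repair in part \eqref{algebra}: you propose to ``rerun the scheme of (1)--(2) with $p_1=p_2=p_0=p$'', but the H\"older step in that scheme requires $\frac{1}{p_1}+\frac{1}{p_2}=\frac{1}{p_0}$, which with $p_1=p_2=p_0=p$ forces $p=\infty$. The correct choice is $p_1=p_0=p$, $p_2=\infty$ (and $q_1=q_2=q_0=q$, where the weighted Young step is replaced by the $q$-algebra convolution inequality), combined with the embedding $M^{p,q}_\omega\hookrightarrow M^{\infty,q}_\omega$; this yields $M^{p,q}_\omega\cdot M^{\infty,q}_\omega\hookrightarrow M^{p,q}_\omega$ and hence the algebra property. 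The same adjustment is needed for $W^{p,q}_\omega$. Your treatment of $\widehat{w}^{p,q}_\omega$ via Theorem \ref{coralgebra} is exactly the paper's, and your sketch of \eqref{mwaminc} (endpoint Hausdorff--Young plus monotonicity in the frequency index) is the standard argument behind the cited results, so no further objection there.
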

\begin{proof}
\eqref{ams}, \eqref{aws} and \eqref{mwaminc}: These results for $\omega\equiv1$ are obtained in \cite{baoxiang2006isometric}, \cite[Corollary 2.7]{Tbenyi2009}, \cite[Lemma 2.2]{E1} and \cite{moT}.  
Following same techniques and using the fact $\|f\|_{L^q_\omega}=\|f\omega\|_{L^q}$, we get the desired result.

\eqref{algebra}: Applying Fourier transform on the following inclusion which follows from \cite[Theorem 3]{Fei} $$W(\mathcal{F}L^p,L^q_\omega) \ast W(\mathcal{F}L^p,L^q_\omega) \subset W(L^1,L^q_\omega) \ast W(\mathcal{F}L^p,L^p_\omega) \subset W(\mathcal{F}L^p,L^q_\omega)$$ and using Theorem \ref{coralgebra} it follows that $M^{p,q}_\omega$ is a Banach algebra. Also, $M^{p,q}_\omega=\mathcal{F}W^{p,q}_\omega$ gives desired result for $W^{p,q}_\omega$. Since $\widehat{w}^{p,q}_\omega=\mathcal{F}W(L^p,\ell^q_\omega)$, by Theorem \ref{coralgebra} it follows that $\widehat{w}^{p,q}_\omega$ Banach algebra.

\eqref{faminc}: Follows from Young's inequality.

\end{proof}

\section{The principal lemmas} \label{sec:prilem}
In this section, we shall provide some lemmas that will be used to prove our results.
Throughout this section, we shall assume that  $1 \leq q < 2$, $G$ is a discrete abelian group, $\Gamma$ is the dual of $G$, and $\omega$ is an admissible weight on $G$. 

\begin{definition} \label{def:lbd}
Let $S$ be a set, $\cX$ and $\mathcal Y$ be Banach algebras, and let $\mathcal{A}$ and $\mathcal{B}$ be two function algebras consisting of functions from $S$ to $\cX$ and $\mathcal Y$ respectively. Let  $F:E\subset \cX \to \mathcal Y$ be a function, and  $$T_F(f)=F(f)=F\circ f \quad \text{for} \ f\in \mathcal{A}.$$ The map $T_F$ is \textbf{locally bounded} at $s\in S$ if there are $L>0$, $\eta>0$ and a neighborhood $V_s$ of $s$ such that $$\|T_F(f)\|_{\mathcal{B}}<L$$ for all $f\in \mathcal{A}$ whose range is contained in $E$, support lies in $V_s$ and $\|f\|_{\mathcal{A}} < \eta$. 
\end{definition}

\begin{remark}\label{ter}  For $\gamma' \in\Gamma$, let $L_{\gamma'}$ be the left translation of $\Gamma$ given by $(L_{\gamma'}f)(\gamma) = f(\gamma'^{-1} \gamma)$ for $\gamma \in \Gamma,$  $f\in A^1_{\omega}(\Gamma,\cX_1)$.
 Note that $\|\cdot\|_{A^1_\omega}$ is a left translation invariant. In fact, for  $f\in A^1_{\omega}(\Gamma,\cX_1)$, we have \begin{align*}
    \|L_{\gamma'}(f)\|_{A^1_\omega}=\|\widehat{L_{\gamma'}(f)}\|_{\ell^1_\omega}&=\sum_{x\in G} \|\widehat{L_{\gamma'}(f)}(x)\|\omega(x) \\
    &=\sum_{x\in G} \left\|\int_\Gamma f(\gamma'^{-1} \gamma) \gamma(x) d\gamma \right\| \omega(x) \\
    &=\sum_{x\in G} \left\|\int_\Gamma f(\gamma'^{-1} \gamma) (\gamma'^{-1}\gamma)(x)  \gamma'(x) d\gamma \right\| \omega(x) \\
    &=\sum_{x\in G} |\gamma'(x)| \left\|\int_\Gamma f(\gamma'^{-1} \gamma) (\gamma'^{-1}\gamma)(x) d\gamma \right\| \omega(x) \\
    &=\sum_{x\in G} \|\widehat{f}(x)\|\omega(x) \\
    &= \|f\|_{A^1_\omega}.
\end{align*}
 Similar results follow for all right translations $R_{\gamma'}$, for $\gamma'\in\Gamma$, given by $R_{\gamma'}(f)(\gamma)=f(\gamma \gamma')$ for $\gamma \in \Gamma$ and $f\in A^1_{\omega}(\Gamma,\cX_1)$.
\end{remark}

Now, we state a lemma that states that $T_F$ is locally bounded. It is achieved through the contradiction argument using the Domar's version of Urysohn's lemma for weighted Fourier algebra Theorem \ref{Domar} and the fact that the norms $\|\cdot\|_{A^1_\omega}$ and $\|\cdot\|_{A^q}$ are translation invariant for $\Gamma$. It should be duly noted that the (weighted) norm $\|\cdot\|_{L^1_\omega}$ is not translation invariant for translations of $G$.

\begin{lemma} \label{lbd}
Let $\cX_1$ be a unital subalgebra of a unital Banach algebra $\cX$ both having the same unit $1_\cX$, $E \subset \cX_1$ be such that it contains the unit ball of $\cX_1$, and $F : E \to \cX$ be a function such that $T_F$ maps $A^1_{\omega}(\Gamma,\cX_1)$ to $A^q(\Gamma,\cX)$. Then $T_F$ is locally bounded at each $\gamma \in \Gamma$.
\end{lemma}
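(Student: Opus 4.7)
The plan is to argue by contradiction: suppose $T_F$ fails to be locally bounded at some $\gamma_0 \in \Gamma$. Negating Definition \ref{def:lbd}, for every open neighborhood $V$ of $\gamma_0$ and every $\eta, L > 0$ one may produce an $f \in A^1_\omega(\Gamma, \cX_1)$ with range in $E$, $\supp(f) \subset V$, $\|f\|_{A^1_\omega} < \eta$, but $\|T_F(f)\|_{A^q} > L$. The strategy is to piece countably many such ``bad'' functions, spread over distinct points of $\Gamma$, into a single $f \in A^1_\omega(\Gamma, \cX_1)$ whose image $T_F(f)$ cannot lie in $A^q(\Gamma, \cX)$---a contradiction.

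I would carry out an inductive construction. Since $G$ is discrete and noncompact, $\Gamma$ is an infinite compact abelian Hausdorff group, so I may pick distinct $\gamma_n \in \Gamma$. At step $n$, having chosen $\gamma_n$, I select nested open neighborhoods $V_n \subset W_n$ of $\gamma_0$ small enough that the translates $\overline{W_n + \gamma_n - \gamma_0}$ are pairwise disjoint from each $\overline{W_m + \gamma_m - \gamma_0}$ with $m < n$ (finitely many conditions, achievable by Hausdorffness). Domar's theorem (Theorem \ref{Domar}) then supplies $u_n \in A^1_\omega(\Gamma, \mathbb{C})$ with $u_n \equiv 1$ on $\overline{V_n + \gamma_n - \gamma_0}$ and $\supp(u_n) \subset W_n + \gamma_n - \gamma_0$. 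Set $C_n := \|u_n\|_{A^1_\omega}$ and invoke the contradiction hypothesis to choose $f_n$ with $\supp(f_n) \subset V_n$, $\|f_n\|_{A^1_\omega} < 2^{-n}$, and $\|T_F(f_n)\|_{A^q} > n C_n$. Translating, define $\tilde f_n(\gamma) := f_n(\gamma - \gamma_n + \gamma_0)$; the Fourier-side identity $\widehat{\tilde f_n}(x) = (\gamma_0 - \gamma_n)(-x)\widehat{f_n}(x)$ and unimodularity of characters give $\|\tilde f_n\|_{A^1_\omega} = \|f_n\|_{A^1_\omega}$ and $\|T_F(\tilde f_n)\|_{A^q} = \|T_F(f_n)\|_{A^q}$.

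Next set $f := \sum_n \tilde f_n$; absolute $A^1_\omega$-convergence follows from $\sum 2^{-n} < \infty$, and disjointness of supports together with $0 \in E$ (since $E$ contains the unit ball of $\cX_1$) forces the range of $f$ to lie in $E$. Hence $T_F(f) \in A^q(\Gamma, \cX)$ by hypothesis, with $M := \|T_F(f)\|_{A^q} < \infty$. On $\supp(u_n)$ every $\tilde f_m$ with $m \neq n$ vanishes by the disjointness arrangement, giving the pointwise identity $u_n T_F(f) = u_n T_F(\tilde f_n)$. Since $u_n \equiv 1$ on $\supp(\tilde f_n)$ while $F(\tilde f_n) - F(0)$ vanishes outside that support,
\begin{equation*}
u_n T_F(\tilde f_n) \;=\; \bigl(T_F(\tilde f_n) - F(0) \cdot 1\bigr) + u_n F(0) \cdot 1.
\end{equation*}
Combining the module inequality \eqref{Holder relation} (yielding $\|u_n T_F(f)\|_{A^q} \leq C_n M$) with the discrete-$G$ embedding $\|u_n\|_{A^q} \leq \|u_n\|_{A^1_\omega}$ and the triangle inequality, one obtains
\begin{equation*}
\|T_F(f_n)\|_{A^q} \;\leq\; \|T_F(\tilde f_n) - F(0)\cdot 1\|_{A^q} + \|F(0)\| \;\leq\; C_n(M + \|F(0)\|) + \|F(0)\|,
\end{equation*}
so the ratio $\|T_F(f_n)\|_{A^q}/C_n$ stays bounded, contradicting $\|T_F(f_n)\|_{A^q} > n C_n$ for $n$ large.

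The principal obstacle is that the compact group $\Gamma$ does not admit infinitely many pairwise disjoint translates of a single fixed open set, so a cutoff of uniformly bounded $A^1_\omega$-norm cannot work; the neighborhoods $W_n$ must shrink with $n$, and $C_n$ may blow up. The construction circumvents this by fixing $u_n$ (hence $C_n$) \emph{before} selecting $f_n$: the contradiction hypothesis still gives $\|T_F(f_n)\|_{A^q}$ arbitrarily large, in particular $> n C_n$, and the finite bound $M$ on $T_F(f)$ then caps the ratio to deliver the contradiction.
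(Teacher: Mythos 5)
Your proof is correct and is essentially the paper's argument: a contradiction is reached by producing Domar cutoffs $u_n$ and disjointly supported bad functions with $\|T_F(f_n)\|_{A^q}>n\|u_n\|_{A^1_\omega}$, summing them into a single $f\in A^1_\omega(\Gamma,\cX_1)$, and localizing $T_F(f)$ back with the cutoffs via \eqref{Holder relation}. The only organizational difference is that the paper first upgrades failure at one point to failure everywhere (via translation invariance and commutation of $T_F$ with translations) and then spreads the bad functions over disjoint open sets, whereas you keep them concentrated at $\gamma_0$ and translate them apart afterwards — the same translation-invariance input used in a different order — and you carry $F(0)$ explicitly instead of normalizing $F(0)=0$ at the outset.
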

\begin{proof}
    We may assume that $F(0)=0$ as if not, then replace $F$ by $F-F(0)$. Note that $L_{\gamma'}$ and $T_F$ commutes on $ A^1_{\omega}(\Gamma,\cX_1)$. In fact, we have  $$(T_F \circ L_{\gamma'})(f)(\gamma)=F\circ f(\gamma'^{-1}\gamma)=(L_{\gamma'} \circ T_F )(f)(\gamma), \quad  \forall \  \gamma \in \Gamma, f\in A^1_{\omega}(\Gamma,\cX_1). $$ Thus, $T_F$ commutes with all translations of $\Gamma$ and $\|\cdot\|_{A^1_\omega}$ and  $\|\cdot\|_{A^q}$ are   translation invariant (see Remark \ref{ter}).

Now, if $T_F$ is locally bounded at some $\gamma\in\Gamma$, then there are $L>0$, $\eta>0$ and a neighborhood $V$ of $\gamma$ such that $\|T_F(f)\|_{A^q}<L$ for all $f\in A^1_{\omega}(\Gamma,\cX_1)$ whose range is contained in $E$, support lies in $V$ and $\|f\|_{A^1_{\omega}} < \eta$. Let $\gamma'\in\Gamma$ with $\gamma\neq\gamma'$, and let $V'$ be a translate of $V$ such that $\gamma'\in V'$. Then the facts that $T_F$ commutes with all translations of $\Gamma$ and the norms $\|\cdot\|_{A^1_{\omega}}$ and $\|\cdot\|_{A^q}$ are translation invariant implies that $\|T_F(g)\|_{A^q}<L$ for all $g\in A^1_{\omega}(\Gamma,\cX_1)$ whose range is contained in $E$, support lies in $V'$ and $\|g\|_{A^1_{\omega}} < \eta$, that is, $T_F$ is locally bounded at $\gamma' \in \Gamma$. Since $\gamma'$ was chosen arbitrarily, it means that $T_F$ is locally bounded at each $\gamma\in\Gamma$. Thus, $T_F$ can be either locally bounded at every point of $\Gamma$ or at no point of $\Gamma$.

For each $n\in\mathbb{N}$, choose disjoint open sets $V_n$ in $\Gamma$ such that there are nonempty sets $W_n$ with $\overline{W_n}\subset V_n$. By Theorem \ref{Domar} and Remark \ref{Domarvector}, there are $\phi_n\in A^1_{\omega}(\Gamma,\cX)$ with $0\leq\|\phi_n(\gamma)\|\leq1$ $(\gamma\in\Gamma)$, $\phi_n(\gamma)=1_\cX$ for all $\gamma\in W_n$ and $\phi_n$ vanishes outside $V_n$.  
 Suppose that $T_F$ is not locally bounded at any $\gamma \in \Gamma$.
  Then, for each $n\in\mathbb{N}$, there are $f_n \in A^1_{\omega}(\Gamma,E)$ such that 
\begin{enumerate}
	\item $f_n$ vanishes outside $W_n$,
	\item $\|f_n\|_{A^1_{\omega}}<\frac{1}{n^2}$, and
	\item $\|F(f_n)\|_{A^q}>n \|\phi_n\|_{A^1_{\omega}}$.
\end{enumerate}
Let $f=\sum_{n\in\mathbb{N}} f_n$. Then $f\in A^1_{\omega}(\Gamma,E)$ as for each $\gamma\in \Gamma$, there is $n\in\mathbb{N}$ such that $\gamma\in W_n$ and $f(\gamma)=f_n(\gamma)\in E$. Since $\phi_n(\gamma)=1_\cX$ $(\gamma\in W_n)$, $\phi_n$ vanishes outside $V_n$, $f_n$ vanishes outside $W_n$ and $F(0)=0$, it follows that $$\phi_n(\gamma) F(f(\gamma))= \phi_n(\gamma) F(f_n(\gamma))= F(f_n(\gamma)) \quad (n\in\mathbb{N}, \gamma \in \Gamma).$$ This along with \eqref{Holder relation} gives \begin{align*} \|\phi_n\|_{A^1_\omega} \|F(f)\|_{A^q} \geq \|\phi_n F(f)\|_{A^q} = \|F(f_n)\|_{A^q} > n \|\phi_n\|_{A^1_{\omega}}. \end{align*} Thus $\|F(f)\|_{A^q}>n$ for all $n\in\mathbb N$ which is not possible as $T_F$ maps from $A^1_{\omega}(\Gamma,\cX_1)$ to $A^q(\Gamma,\cX)$. Hence, $T_F$ is locally bounded at each $\gamma\in\Gamma$.
\end{proof}
Our next lemma tells us that $T_{F}$ maps certain neighborhood  $V$  to a bounded subset. The choice of  $V$ and boundedness essentially comes from previous lemma with the aid of cleverly forming  a partition of unity  and invoking the result of Bochner and Phillips (Theorem \ref{kb}).

\begin{lemma} \label{bdtobd}
Let $\cX_1$ be a unital subalgebra of a unital Banach algebra $\cX$ both having the same unit $1_\cX$, $E \subset \cX_1$ be such that it contains the unit ball of $\cX_1$, and let $F : E \to \cX$ be a function such that $T_F$ maps $A^1_\omega(\Gamma,\cX_1)$ to $A^q(\Gamma,\cX)$. Then $T_F$ maps a certain neighborhood of $0$ in $A^1_\omega(\Gamma,E)$ to a bounded subset of $A^q(\Gamma,\cX)$.
\end{lemma}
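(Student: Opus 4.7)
The plan is to combine the pointwise local boundedness from Lemma \ref{lbd} with a partition of unity inside $A^1_\omega(\Gamma,\mathbb{C})$, exploiting compactness of $\Gamma$ (since $G$ is discrete) and the Wiener-type inversion result Theorem \ref{kb}.

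As in the proof of Lemma \ref{lbd}, I would first reduce to the case $F(0)=0$ by replacing $F$ with $F-F(0)$ (the constant $F(0)$ only shifts the final bound). The translation-invariance argument built into that proof actually yields uniform local-boundedness constants: there exist $L,\eta>0$ and an open set $V_0\subset\Gamma$ such that $\|T_F(h)\|_{A^q}<L$ for every $h\in A^1_\omega(\Gamma,\cX_1)$ whose range lies in $E$, whose support lies in some translate of $V_0$, and with $\|h\|_{A^1_\omega}<\eta$. Since $G$ is discrete, $\Gamma$ is compact, so I can extract a finite cover $V_1,\ldots,V_N$ of $\Gamma$ by such translates.

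Next I would construct a partition of unity subordinate to $\{V_j\}$ inside $A^1_\omega(\Gamma,\mathbb{C})$. Refine to open sets $U_j\subset\overline{U_j}\subset W_j\subset\overline{W_j}\subset V_j$ with $\{U_j\}$ still covering $\Gamma$. By Domar's Theorem \ref{Domar}, pick $\psi_j\in A^1_\omega(\Gamma,\mathbb{C})$ with $0\le\psi_j\le 1$, $\psi_j\equiv 1$ on $W_j$, $\supp\psi_j\subset V_j$, and similarly $\chi_j\in A^1_\omega(\Gamma,\mathbb{C})$ with $\chi_j\equiv 1$ on $\overline{U_j}$ and $\supp\chi_j\subset W_j$. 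The sum $\Phi:=\sum_k\chi_k$ is bounded below by $1$ on $\Gamma$, hence nowhere vanishing, so Theorem \ref{kb} (the Wiener/Bochner-Phillips theorem for $A^1_\omega$) supplies $1/\Phi\in A^1_\omega(\Gamma,\mathbb{C})$. Then $\widetilde\psi_j:=\chi_j/\Phi$ lies in $A^1_\omega(\Gamma,\mathbb{C})$ with $\sum_j\widetilde\psi_j\equiv 1$ and $\supp\widetilde\psi_j\subset W_j$.

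Now take $f\in A^1_\omega(\Gamma,E)$ with $\|f\|_{A^1_\omega}<\delta$, where $\delta<\min\bigl(1,\eta/\max_j\|\psi_j\|_{A^1_\omega}\bigr)$. The embedding $\|f\|_\infty\le\|f\|_{A^1_\omega}$ (immediate from inverting the Fourier series) forces the range of $f$ into the unit ball of $\cX_1\subset E$. Setting $g_j:=\psi_j f$, one has $\supp g_j\subset V_j$, range of $g_j$ in the unit ball of $\cX_1\subset E$ (since $|\psi_j|\le 1$), and $\|g_j\|_{A^1_\omega}\le\|\psi_j\|_{A^1_\omega}\|f\|_{A^1_\omega}<\eta$ using that $A^1_\omega$ is a Banach algebra. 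Local boundedness then yields $\|F(g_j)\|_{A^q}<L$. Since $\psi_j\equiv 1$ on $\supp\widetilde\psi_j\subset W_j$, there $g_j=f$, and hence $\widetilde\psi_j\,F(g_j)=\widetilde\psi_j\,F(f)$ pointwise on $\Gamma$ (off $\supp\widetilde\psi_j$ both sides vanish). Summing and using the inequality \eqref{Holder relation},
$$\|F(f)\|_{A^q}=\Bigl\|\sum_{j=1}^N\widetilde\psi_j F(g_j)\Bigr\|_{A^q}\le\sum_{j=1}^N\|\widetilde\psi_j\|_{A^1_\omega}\,\|F(g_j)\|_{A^q}\le L\sum_{j=1}^N\|\widetilde\psi_j\|_{A^1_\omega},$$
which is a bound independent of $f$ in the neighborhood $\{\|f\|_{A^1_\omega}<\delta\}$.

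The main obstacle I anticipate is the construction of the partition of unity inside the weighted algebra: the admissibility of $\omega$ (the Beurling-Domar condition \eqref{BDcond}) is essential for Theorem \ref{Domar} to supply the cutoffs $\psi_j,\chi_j$, and the normalization step intrinsically requires the vector/scalar Wiener theorem Theorem \ref{kb} to invert $\Phi$ within $A^1_\omega$. A secondary technical verification, handled by the embedding $A^1_\omega\hookrightarrow L^\infty$ together with the hypothesis that $E$ contains the unit ball of $\cX_1$, is that $g_j=\psi_j f$ still takes values in $E$ so that $F(g_j)$ is even defined.
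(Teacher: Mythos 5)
Your proof is correct and follows essentially the same route as the paper's: local boundedness from Lemma \ref{lbd} with uniform constants, a finite cover of the compact $\Gamma$, nested Domar cutoffs normalized into a partition of unity via Theorem \ref{kb}, and the identity $\widetilde\psi_j F(f)=\widetilde\psi_j F(\psi_j f)$ combined with \eqref{Holder relation}. Your explicit verification that $\psi_j f$ still takes values in $E$ (by forcing the range of $f$ into the unit ball via $\|f\|_\infty\le\|f\|_{A^1_\omega}<\delta<1$) is a useful point of care that the paper's proof passes over silently.
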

\begin{proof}
By Lemma \ref{lbd} and translation invariance of norms $\|\cdot\|_{A^1_\omega}$ and $\|\cdot\|_{A^q}$ (Remark \ref{ter}), there is a neighborhood $V$ of 0 in $\Gamma$, $L>0$ and $\eta>0$ such that $\|F(f)\|_{A^q} < L$ for all $f\in A^1_{\omega}(\Gamma,E)$ with $\|f\|_{A^1_\omega}<\eta$ and support of $f$ in some translate of $V$. Let $U$ and $W$ be two neighborhoods of 0 in $\Gamma$ such that $\overline{W} \subset U \subset \overline{U} \subset V.$  By Theorem \ref{Domar}, choose $\widetilde \alpha,\widetilde \beta \in A^1_\omega(\Gamma,\mathbb{C})$ such that $0 \leq \widetilde \alpha \leq1$, $0 \leq \widetilde \beta \leq 1$, $\widetilde \alpha_{|W}=1$, $\supp(\widetilde \alpha)\subset U$, $\widetilde \beta_{|U}=1$ and $\supp(\widetilde \beta)\subset V,$ and consider the functions $\alpha=1_\cX\widetilde \alpha$ and $\beta=1_\cX\widetilde \beta$ in $A^1_\omega(\Gamma,E)$. 
Since $\Gamma$ is compact, finite translations of $W$, say $W_1,W_2,\dots,W_n$, cover $\Gamma$. Let $\alpha_i$ and $\beta_i$, for $1\leq i \leq n$, be the corresponding translates of $\alpha$ and $\beta$, and let $$\phi_i=\frac{\alpha_i}{\alpha_1+\alpha_2+\dots+\alpha_n} \quad (1\leq i \leq n).$$ Then $\widetilde\alpha_1(\gamma)+\widetilde\alpha_2(\gamma)+\dots+\widetilde\alpha_n(\gamma)\neq0$ for all $\gamma\in\Gamma$ implies that $\phi_i \in A^1_\omega(\Gamma,E)$ for each $1 \leq i\leq n$ by Theorem \ref{kb}. In addition, $\sum_{i=1}^n \phi_i \equiv 1_\cX$, that is, $\{\phi_i\}_{i=1}^n$ forms a partition of unity.

Let $f\in A^1_\omega(\Gamma,E)$ be such that $\|f\|_{A^1_\omega}<\frac{\eta}{\|\beta\|_{A^1_\omega}}$. Then, for $1\leq i \leq n$, $\beta_i f$ has support in some translate of $V$ and $\|\beta_i f\|_{A^1_\omega} \leq \|\beta_i \|_{A^1_\omega} \|f\|_{A^1_\omega} < \eta$. This implies $\|F(\beta_i f) \|_{A^q} <L$. Since  $\beta_i$ is $1_\cX$ on support of $\alpha_i$ and hence $\phi_i$, we have $$F(f) = \sum_{i=1}^n \phi_i F(f) = \sum_{i=1}^n \phi_i F(\beta_i f).$$ This along with \eqref{Holder relation} gives $$ \|F(f) \|_{A^q} < L \sum_{i=1}^n \|\phi_i \|_{A^1_\omega}. $$ Let $\delta=\frac{\eta}{\|\beta\|_{A^1_\omega}}$ and $C=L \sum_{i=1}^n \|\phi_i \|_{A^1_\omega}$. Then $\|F(f) \|_{A^q} < C$ for all $f\in A^1_\omega(\Gamma,E)$ with $\|f\|_{A^1_\omega}<\delta$. This concludes the proof.
\end{proof}

\begin{remark}
In  Lemmas \ref{lbd} and \ref{bdtobd}, we have taken $E \subset \cX_1$ such that it contains the unit ball of $\cX_1$ to invoke Theorem \ref{Domar}. But we way replace the unit ball by even a smaller radius ball and use a modified version of Theorem \ref{Domar} giving the desired results.
\end{remark}

\begin{remark}\label{whyq<2} In Lemmas \ref{lbd} and \ref{bdtobd}, the hypothesis  $q<2$ is not required. However, in  Lemma \ref{sup}  and in other proofs, Plancheral theorem will be utilized along with the fact that $\ell^q(G)\subset\ell^2(G)$ and so we have taken $1\leq q <2$.
\end{remark}

%The following lemma is a vector valued and weighted version of \cite[Theorem 2]{rud}. and the proof follows the same line of arguments.\\

The result of  Rudin \cite[Theorem 2]{rud}  states that supremum of $\|e^{if}\|_{A^q}$ is greater than $K_q^n$ where $K_q>1$ is constant and the supremum is taken over all real valued $f\in A^1(\Gamma)$ with norm $\leq n$. Below Lemma \ref{sup} shows that the same result even holds for smaller algebra $A^1_\omega(\Gamma)$ (weighted) which is a subalgebra of $A^1(\Gamma)$ and in the  vector valued setting. The modifications has to be done in accordance to the weight.  No particular property of weight is used. The main idea is to just construct function $g_x$ in \eqref{dgl} in accordance with the weight. It plays an important part in the proof of the main theorem as it will give the convergence of power series form of $F$.

\begin{lemma} \label{sup}
Let $\cX$ be a unital commutative Banach algebra, and let $\omega$ be a (not necessarily admissible) weight on $G$. For $n\in\mathbb{N}$, let $S_n(\Gamma,\cX)$ be the set of all trigonometric polynomials $f \in A^1_{\omega}(\Gamma,\cX)$ such that $\|f\|_{A^1_\omega}\leq n$. Then there is a constant $K_q>1$ such that $$\sup_{f\in S_n(\Gamma,\cX)} \|e^{if}\|_{A^q}\geq K_q^n.$$
\end{lemma}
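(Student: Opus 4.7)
My plan is to follow Rudin's proof of \cite[Theorem 2]{rud}, adapted to the vector-valued, weighted setting in two steps: first reduce to the scalar case $\cX=\C$, then carry out Rudin's construction with the classical Urysohn lemma replaced by Domar's weighted analogue (Theorem \ref{Domar}). For the reduction, recall from Remark \ref{CsubalgebraX} the unit-preserving embedding $\C \hookrightarrow \cX$, $z \mapsto z\,1_\cX$. Assuming $\|1_\cX\|=1$ (arranged by a harmless renorming), this gives isometric inclusions $A^p_\omega(\Gamma,\C) \hookrightarrow A^p_\omega(\Gamma,\cX)$ for $p\in\{1,q\}$. Every real-valued trig polynomial $f\in S_n(\Gamma,\cX)$, interpreted as $\R\cdot 1_\cX$-valued, is of the form $f=h\,1_\cX$ for some real trig polynomial $h\in A^1_\omega(\Gamma,\C)$ with $\|h\|_{A^1_\omega}=\|f\|_{A^1_\omega}\leq n$; since $(h\,1_\cX)^k=h^k\,1_\cX$, we have $e^{if}=e^{ih}\,1_\cX$ and therefore $\|e^{if}\|_{A^q(\Gamma,\cX)}=\|e^{ih}\|_{A^q(\Gamma,\C)}$. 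Thus the vector-valued claim reduces to the scalar one.

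For the scalar case I would construct, for each $n$, a real trig polynomial $h_n\in A^1_\omega(\Gamma,\C)$ with $\|h_n\|_{A^1_\omega}\leq n$ and $\|e^{ih_n}\|_{A^q}\geq K_q^n$ for a constant $K_q>1$ depending only on $q$. The principal input is Theorem \ref{thm:rud 4}, which provides closed sets $E\subset\Gamma$ with $\|\chi_E\|_{A^q}$ arbitrarily large for $1\leq q<2$. Domar's theorem \ref{Domar} then yields $A^1_\omega$-functions adapted to such sets, and, by density of trig polynomials in $A^1_\omega$, one assembles base pieces $h^{(1)},\ldots,h^{(n)}$ with pairwise disjoint, arithmetically independent Fourier supports on $G$, $A^1_\omega$-norm at most one each, and a base estimate $\|e^{ih^{(j)}}\|_{A^q}\geq K_q$ (this base step uses Plancherel together with $\ell^q(G)\subset\ell^2(G)$ for $q<2$; cf.\ Remark \ref{whyq<2}). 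Arithmetic independence forces the Fourier supports of the exponentials $e^{ih^{(j)}}$ on $G$ to be pairwise disjoint, so the $A^q$-norm factorises: $h_n:=\sum_j h^{(j)}$ has $\|h_n\|_{A^1_\omega}\leq n$ and $\|e^{ih_n}\|_{A^q}=\prod_{j=1}^n\|e^{ih^{(j)}}\|_{A^q}\geq K_q^n$.

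The main obstacle is that $\|\cdot\|_{A^1_\omega}\geq\|\cdot\|_{A^1}$ renders $S_n$ strictly smaller than its unweighted analogue, so Rudin's unweighted conclusion does not transfer automatically. The resolution, as flagged by the authors, is that Theorem \ref{Domar} supplies $A^1_\omega$-Urysohn functions with the same qualitative flexibility as the classical Urysohn lemma---precisely what the Beurling--Domar admissibility of $\omega$ guarantees---so Rudin's construction can be executed verbatim inside $A^1_\omega$. The remaining work is careful bookkeeping of the weighted norms of the base pieces and their products, the ``not much difficulty'' adaptation referenced in the paper.
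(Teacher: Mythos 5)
Your skeleton---reduce to $\cX=\C$, build $n$ arithmetically independent base pieces of weighted norm at most one with $\|e^{ih^{(j)}}\|_{A^q}\geq K_q$, then multiply the exponentials---is indeed the paper's (and Rudin's), and the scalar reduction is fine in substance: since the lemma only claims a lower bound on a supremum, the inclusion $S_n(\Gamma,\C)\subset S_n(\Gamma,\cX)$ induced by $z\mapsto z\,1_\cX$ already suffices, which is all the paper uses. (Your stronger assertion that every $f\in S_n(\Gamma,\cX)$ has the form $h\,1_\cX$ is both unnecessary and false in general, e.g.\ for $\cX=\C^2$ with $\R^2$-valued $f$.) The genuine gap is in the base step. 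You propose to manufacture the pieces from Theorem \ref{thm:rud 4} and Domar's theorem; neither enters the paper's proof of this lemma, and it is not clear how Urysohn-type bump functions would produce a \emph{uniform} constant $K_q>1$. The paper instead takes completely explicit pieces $g_{x_0}(\gamma)=\exp\bigl(i\,2\Re(\gamma(x_0))/(\omega(x_0)+\omega(-x_0))\bigr)$, i.e.\ exponentials of cosines normalized so that the corresponding summand of $\widehat{f}$ is $(\delta_{x_0}+\delta_{-x_0})/(\omega(x_0)+\omega(-x_0))$ and hence has $\ell^1_\omega$-norm exactly $1$; this normalization by $\omega(x_0)+\omega(-x_0)$ \emph{is} the entire weighted adaptation, and your proposal never identifies it. The estimate $\|g_{x_0}\|_{A^q}>1$ then follows from $|g_{x_0}|\equiv1$, Plancherel ($\|g_{x_0}\|_{A^2}=1$) and $|c|^q>|c|^2$ for $0<|c|<1$---you name the right mechanism, but apply it to unspecified Urysohn-built pieces for which the computation cannot actually be carried out.

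The multiplicative step is also underdeveloped. Your exact factorization $\|e^{ih_n}\|_{A^q}=\prod_j\|e^{ih^{(j)}}\|_{A^q}$ ``from arithmetic independence'' skips the hard part: $\widehat{e^{ih^{(j)}}}$ is supported on the whole subgroup generated by $\supp\widehat{h^{(j)}}$, so the frequencies $x_1,\dots,x_n$ must be chosen inductively so that these subgroups interact freely. The paper, following Rudin, does this by a case analysis on the structure of $G$ (an element of infinite order; $G$ of bounded order; $G$ of unbounded order with every element of finite order), and in the last case exact factorization fails---one only gets $\|fg_{x_{n+1}}\|_{A^q}>\mu\|f\|_{A^q}\|g_{x_{n+1}}\|_{A^q}$ for arbitrary $\mu<1$ after truncating a Fourier series, whence the bound $(\mu K_q)^n$. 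As written, your proposal does not contain the construction the lemma actually requires, so I would count this as a missing core argument rather than mere bookkeeping.
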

\begin{proof} 
We first show it for $\cX=\mathbb{C}$. For $x_0\in G,$ keeping up with our notations as in Remark \ref{notation}, define $\widehat{\delta_{x_0}}\in \ell^1_\omega(G,\mathbb{C})$ by
\begin{equation}\label{sme}
      \widehat{\delta_{x_0}}(x)= \begin{cases} 1 & if \ x=x_0\\
      0 & if \ x\neq x_0
    \end{cases}.
\end{equation}
Then $\|\widehat{\delta_{x_0}}\|_{\ell^1_\omega}=\omega(x_0)$ and its Fourier transform $\delta_{x_0}\in A^1_\omega(\Gamma,\mathbb{C})$ is given by 
$$\delta_{x_0}(\gamma) = \sum_{x\in G} \widehat{\delta_{x_0}}(x)\gamma(-x) = \gamma(-x_0) \quad (\gamma\in\Gamma).$$

Next, for $x_0\in G$, define $g_{x_0}\in A^1_\omega(\Gamma,\mathbb{C})$ by 
\begin{align}\label{dgl} 
    g_{x_0}(\gamma)=\exp\left(\frac{i 2\Re(\gamma(x_0))}{\omega(x_0)+\omega(-x_0)}\right) \quad (\gamma\in\Gamma),
\end{align} where $\Re(z)$ denotes the real part of a complex number $z$. First, we show that there is a constant $K_q>1$ independent of $x_0$ such that 
    \begin{equation} \label{eq:rud 2}
        \|g_{x_0}\|_{A^q}\geq K_q.
    \end{equation}
In view of   \eqref{invFT}, \eqref{sme} and \eqref{dgl},  we have
\begin{align*}
    \widehat{g_{x_0}}(x) 
    &= \int_\Gamma g_{x_0}(\gamma) \gamma(x) d\gamma \\
    &= \int_\Gamma \exp\left(\frac{i 2\Re(\gamma(x_0))}{\omega(x_0)+\omega(-x_0)}\right) \gamma(x) \ d\gamma\\
    &= \int_\Gamma \exp\left(\frac{i(\gamma(x_0)+\gamma(-x_0))}{\omega(x_0)+\omega(-x_0)}\right) \gamma(x) \ d\gamma\\
    &= \int_\Gamma \sum_{n\in\mathbb{N}\cup\{0\}} \frac{i^n}{n!} \frac{(\gamma(x_0)+\gamma(-x_0))^n}{(\omega(x_0)+\omega(-x_0))^n} \gamma(x) \ d\gamma\\
    &= \int_\Gamma \left[ \sum_{n\in\mathbb{N}\cup\{0\}} \frac{i^n}{n!} \frac{1}{(\omega(x_0)+\omega(-x_0))^n}  \sum_{k=0}^n \binom{n}{k} \gamma(x_0)^{n-k} \gamma(-x_0)^k \right] \gamma(x) \ d\gamma\\
    &= \sum_{n\in\mathbb{N}\cup\{0\}} \left[ \frac{i^n}{n!} \frac{1}{(\omega(x_0) + \omega(-x_0))^n} \sum_{k=0}^n \binom{n}{k} \int_\Gamma  \gamma((n-k)x_0-kx_0) \gamma(x) \ d\gamma \right]\\
    &= \sum_{n\in\mathbb{N}\cup\{0\}} \left[ \frac{i^n}{n!(\omega(x_0) + \omega(-x_0))^n} \sum_{k=0}^n \binom{n}{k} \widehat{\delta_{(2k-n)x_0}}(x) \right],
\end{align*}
where the last relation is due to the fact that $\int_\Gamma \gamma(x) d\gamma$ is $1$ only if $x=0$ and zero otherwise. This shows that  $\widehat{g_{x_0}}$ is supported in the cyclic subgroup of $G$ generated by $x_0$. Hence, say $$\|g_{x_0}\|_{A^q}=K(q,n)  $$ depends only on the order $n$ of $x_0$ in $G$ and $q$. As $|g_{x_0}(\gamma)|=1$ for all $\gamma\in\Gamma$, $\|g_{x_0}\|_{L^2(\Gamma)}=1$ and so by the Plancherel theorem $\|g_{x_0}\|_{A^2(\Gamma)}=1$. For $x\in G$ with $\widehat{g_{x_0}}(x) \neq 0$, $|\widehat{g_{x_0}}(x)|<1$ and this implies that $|\widehat{g_{x_0}}(x)|^2<|\widehat{g_{x_0}}(x)|^q$. Therefore, $K(q,n)>1$ for all $n$. Thus, defining $K_q=\inf_n K(q,n)>1$, we get \eqref{eq:rud 2}.

Observe that if for each $\mu<1$ and $n\in\mathbb{N}$, there are $x_1,x_2,\dots,x_n\in G$ such that 
\begin{equation} \label{eq:rud 3}
    \|g_{x_1}g_{x_2} \dots g_{x_n}\|_{A^q} > (\mu K_q)^n,
\end{equation}
then the lemma follows by taking    $$f(\gamma)=\frac{2\Re(\gamma(x_1))}{\omega(x_1)+\omega(-x_1)}+\frac{2\Re(\gamma(x_2))}{\omega(x_2)+\omega(-x_2)}+ \dots + \frac{2\Re(\gamma(x_n))}{\omega(x_n)+\omega(-x_n)} $$ which is in $S_n(\Gamma,\mathbb{C})$ as 
\begin{align*}
    \widehat{f}=\sum_{i=1}^n \frac{\delta_{x_i}+\delta_{-x_i}}{\omega(x_i)+\omega(-x_i)} \quad \text{and} \quad
    \|f\|_{A^1_\omega}=\sum_{x\in G} \left| \sum_{i=1}^n \frac{\delta_{x_i}(x)+\delta_{-x_i}(x)}{\omega(x_i)+\omega(-x_i)} \right| \omega(x) = n.
\end{align*}
Also, by \eqref{eq:rud 2}, \eqref{eq:rud 3} follows from 
\begin{equation} \label{eq:rud 4}
    \|g_{x_1}g_{x_2} \dots g_{x_n}\|_{A^q} > \mu^n \|g_{x_1}\|_{A^q} \|g_{x_2}\|_{A^q} \dots \|g_{x_n}\|_{A^q}.
\end{equation}
So, it is sufficient to show \eqref{eq:rud 4} which will be achieved by principle of induction. Clearly, it is true for $n=1$. Now, assume that it is true for some $n\in\mathbb{N}$ and let $f=g_{x_1}g_{x_2}\dots g_{x_n}$. It remains to show that there is $x_{n+1}\in G$ such that 
\begin{equation} \label{eq:rud 5}
    \|f g_{x_{n+1}}\|_{A^q} > \mu \|f\|_{A^q} \|g_{x_{n+1}}\|_{A^q}.
\end{equation}

Since $f\in A^1_\omega(\Gamma,\cX)$, it can be approximated by the partial sum of its Fourier series in the norms $\|\cdot\|_{A^1_\omega}$ and $\|\cdot\|_{A^q}$. 
So, it is sufficient to show \eqref{eq:rud 5} with $f$ as a trigonometric polynomial. Then the support of $\widehat{f}$, say $H_1$,  will be a finite subset of $G$. The remaining proof is based on the fact that if $h\in A^q(\Gamma,\cX)$ with $H_2\subset G$ as the support of $\widehat{h}$, and if no $x\in G$ has more than one representation of the form $x=y_1+y_2$ for $y_1\in H_1, y_2\in H_2$, then $\|fh\|_{A^q}=\|f\|_{A^q} \|h\|_{A^q}$. We shall consider three cases.

Case I: If $x_0\in G$ is of infinite order, then there is $m\in\mathbb{N}$ such that the cyclic group $H$ generated by $x_{n+1}=mx_0$ satisfies the above condition and so 
\begin{equation} \label{eq:rud 6}
    \|fg_{x_{n+1}}\|_{A^q}=\|f\|_{A^q} \|g_{x_{n+1}}\|_{A^q}.
\end{equation}

Case II: If $G$ is of bounded order, then $G$ is a direct sum of finite cyclic groups. So, there is $x_{n+1}\in G$ such that the cyclic group $H$ generated by $x_{n+1}$ has only 0 in common with $H_1$, support of $f$, and \eqref{eq:rud 6} follows.

Case III: The only remaining case is where $G$ is not of bounded order and every $x\in G$ has finite order. This means the group $E$ generated by $H_1$ is finite. For each $N\in\mathbb{N}$, there is $x_{n+1}$ such that $mx_{n+1}\notin E$ if $|m|<2N$ or else $G$ is of bounded order. Let 
\begin{equation} \label{eq:rud 7}
    g_{x_{n+1}}(\gamma)=\sum_{m\in\mathbb{Z}} a_m \gamma(mx_{n+1}) \quad \text{with} \quad \sum_{m\in\mathbb{Z}} |a_m| <\infty.
\end{equation}
Let $h(\gamma)$ be the partial sums of \eqref{eq:rud 7} with $|m|<N$. Then $\|fg\|_{A^q} = \|f\|_{A^q} \|g\|_{A^q}$. We can make $\|g_{x_{n+1}}-h\|_{A^q}$ as small as we want by taking $N$ large enough, and thus \eqref{eq:rud 5} follows. This completes the proof for $\cX=\mathbb{C}$.

Now, if $\cX$ is any unital commutative Banach algebra, then $A^1_\omega(\Gamma,\mathbb{C}) \subset A^1_\omega(\Gamma,\cX)$  implies $S_n(\Gamma,\mathbb{C})\subset S_n(\Gamma,\cX)$ and thus $$\sup_{f\in S_n(\Gamma,\cX)} \|e^{if}\|_{A^q} \geq \sup_{f\in S_n(\Gamma,\mathbb{C})} \|e^{if}\|_{A^q} \geq K_q^n.$$
\end{proof}

Lastly, we state a lemma required to show the continuity of $F$.
\begin{lemma} \label{lem:cont}
Let $\cX$ be a commutative unital Banach algebra, and let $\{f_n\}_{n\geq1}$ be a sequence in $A^q(\Gamma,\cX)$ such that $\|f_n(\gamma)\|\leq M_1$ and $\|f_n\|_{A^q}\leq M_2$ for some positive constants $M_1,M_2$ and for all $n\in\mathbb{N}$ and $\gamma\in\Gamma$. If $f_n(\gamma)\to f(\gamma)$ a.e. in $\Gamma$, then $\|f\|_{A^q}\leq M_2$.   
\end{lemma}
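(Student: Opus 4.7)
The plan is to pull the problem back through the Fourier transform: use the inverse Fourier formula to express the Fourier coefficients of $f$ as pointwise limits of those of $f_n$, and then use Fatou's lemma on the discrete group $G$. Recall that throughout this subsection $G$ is discrete, so $\Gamma$ is compact, and the inverse formula~(\ref{invFT}) applies. The pointwise bound $\|f_n(\gamma)\|\leq M_1$ passes to the a.e.\ limit, giving a measurable function $f$ with $\|f(\gamma)\|\leq M_1$ a.e.; since $\Gamma$ has finite Haar measure, $f \in L^1(\Gamma,\cX)$, and its Fourier coefficients
\[ \widehat f(x) = \int_\Gamma f(\gamma)\gamma(x)\,d\gamma, \qquad x\in G, \]
are well-defined elements of $\cX$.

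First I would fix $x\in G$ and note that the integrands $f_n(\gamma)\gamma(x)$ converge in the norm of $\cX$ to $f(\gamma)\gamma(x)$ for a.e.\ $\gamma\in\Gamma$, with the uniform dominant $\|f_n(\gamma)\gamma(x)\|\leq M_1$ on the finite measure space $\Gamma$. The Bochner-valued dominated convergence theorem then gives
\[ \widehat{f_n}(x) \longrightarrow \widehat f(x) \quad \text{in } \cX, \text{ for each } x\in G. \]

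Second, I would apply Fatou's lemma on the counting measure space $G$ to the nonnegative sequence $x\mapsto \|\widehat{f_n}(x)\|^q$:
\[ \|\widehat f\|_{\ell^q(G,\cX)}^q \;=\; \sum_{x\in G} \liminf_{n} \|\widehat{f_n}(x)\|^q \;\leq\; \liminf_{n} \sum_{x\in G} \|\widehat{f_n}(x)\|^q \;=\; \liminf_{n} \|f_n\|_{A^q}^q \;\leq\; M_2^q. \]
This shows $\widehat f \in \ell^q(G,\cX)$, hence $f\in A^q(\Gamma,\cX)$, and $\|f\|_{A^q}\leq M_2$, as required.

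There is no substantive obstacle here: the argument is a transparent juxtaposition of vector-valued dominated convergence on $\Gamma$ (made possible by the compactness of $\Gamma$ together with the uniform pointwise bound $M_1$) and Fatou's lemma on the discrete group $G$. The only point worth flagging is that the pointwise-in-$\gamma$ bound $M_1$ is essential; without it one could not produce the $L^1(\Gamma,\cX)$-dominant required to interchange limit and integral in the inverse Fourier formula.
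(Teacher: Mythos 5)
Your proof is correct and is essentially the argument the paper intends: the paper's one-line proof invokes vector-valued dominated convergence, which you correctly unpack as Bochner DCT on the compact group $\Gamma$ (using the uniform pointwise bound $M_1$ and the finiteness of the Haar measure) to get pointwise convergence of the Fourier coefficients, followed by Fatou's lemma on the discrete group $G$ to pass the $\ell^q$ bound to the limit. No gaps.
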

\begin{proof}
    The proof follows from the Lebesgue's dominated convergence  theorem for Banach space valued functions.
\end{proof}

\section{Necessary condition} \label{sec:nec}
In this section, we shall show the necessary parts, that is, Theorems \ref{mbk} and \ref{tfam} \eqref{nc1}.

\subsection{Weighted Fourier algebra}
The strategy of the proofs here follows back to the work of Helson, Kahane, Katznelson and Rudin in \cite{helson, rb, rud}. Following this strategy, we shall prove the results for $I=\{(x_1,x_2)\in\mathbb{R}^2 : x_1, x_2 \in [-1,1] \}$ in parts \eqref{twovarq} and \eqref{twovar}, and $I=[-1,1]$ in parts \eqref{vecvalq} and \eqref{vecval} of Theorem \ref{mbk}.
\begin{proof}[\textbf{Proof of Theorem \ref{mbk} \eqref{twovarq}}]
Assume $F(0)=0$ and  take $$I=\{x=(x_1,x_2)\in\mathbb{C}^2 : x_1, x_2 \in [-1,1] \}.$$   Since $G$ is discrete, by Lemma \ref{bdtobd},  there are $\delta>0$ and $C>0$ such that  for $f\in A^1_\omega(\Gamma,\mathbb{R}^2)$ with range of $f$ in $I$, we have
  \begin{eqnarray}\label{bss1}
     \|F(f)\|_{A^q}<C \quad \text{whenever} \quad  \|f\|_{A^1_\omega}<\delta.
  \end{eqnarray}
We claim that   $F$ is continuous. If not, suppose that   $F$ is not continuous at the $0$. So, there is a sequence $\{t_n\} \subset I$ such that $t_n\to 0$ but $F(t_n)\to l\neq 0.$  For $t\in I,$ define  constant functions on $\Gamma$ as follows
$$g_t(\gamma)=t \quad (\gamma\in\Gamma).$$
Clearly $\|g_t\|_{A^1_\omega} = |t| \omega (0).$ By  \eqref{bss1}, if $|t|<\frac{\delta}{\omega(0)}$, then $\|F(g_t)\|_{A^q}<C$ and so for all $\gamma\in\Gamma$, 
\begin{align} \label{F(t)<C}
    |F(g_t)(\gamma)| = |F(t)| = \|F(g_t)\|_{A^q} < C
\end{align}
as $g_t$ is constant implies $F(g_t)$ is also constant.
Decompose   $\Gamma= E\cup E^{c},$  with  $E$ is  closed. Let $\{W_n\}$ be a sequence of closed sets in $E^c$ such that $W_n\subset W_{n+1}$ and measure of $E\cup W_n$ goes to 1 as $n\to\infty$. Let $\{f_n\}_{n=1}^{\infty} \subset A^1_\omega(\Gamma,\mathbb{R})$ be such that 
\begin{equation*}
    f_n=\begin{cases} 1 & on \ E\\
    0 & on \ W_n    
    \end{cases}.
\end{equation*}
Given $k\in\mathbb{N}$, there is $n_k$ such that $|t_{n_k}|\|f_k\|_{A^1_\omega}<\delta$ and so by \eqref{bss1},  $\|F(t_{n_k}f_k)\|_{A^q}<C$. By  the above choice of $f_n$ and \eqref{F(t)<C}, we get $F(t_{n_k}f_k)(\gamma)\rightarrow l \chi_{_E}(\gamma)$ a.e. and  $|F(t_{n_k}f_k)(\gamma)|<C$ for all $k\in\mathbb{N}$.  So, by Lemma \ref{lem:cont},  we get $\|\chi_{_E}\|_{A^q}<\frac{C}{|l|}$. Since $E$ can be any closed subset of $\Gamma$, it contradicts Theorem \ref{thm:rud 4}. So, $F$ is continuous at $0$ and consequently in $I$ by considering $H(x)=F(x_0+x)-F(x_0)$ instead of $F(x)$ for $x_0\in I$. In fact, by arguments as above it can be shown that $H$ is continuous at $0$ and it implies that $F$ is continuous at $x_0$. Hence, the claim is true.

Fix $0<r<\min\left\{\frac{\delta}{2e},\frac{1}{2}\right\}$, and define $F_1:\mathbb{R}^2\to \mathbb{C}^2$ by 
\begin{align} \label{def:F_1}
    F_1(s)=F(r\sin(s))=F(r\sin s_1,r \sin s_2) \quad (s=(s_1,s_2)\in\mathbb{R}^2).
\end{align} 
If $f\in A^1_\omega(\Gamma,\mathbb{R}^2)$, $\|f\|_{A^1_\omega}<1$ and if $a=(a_1,a_2)\in\mathbb{R}^2$, then 
\begin{align} \nonumber
\|\sin(f+a)\|_{A^1_\omega} 
&\leq |\cos(a)| \|\sin(f)\|_{A^1_\omega} + |\sin(a)| \|\cos(f)\|_{A^1_\omega} \\ \nonumber
&= |(\cos a_1,\cos a_2)| \sum_{n\in\mathbb{N}} \frac{1}{(2n-1)!} \|f^{2n-1}\|_{A^1_\omega} + |(\sin a_1,\sin a_2)| \sum_{n\in\mathbb{N}} \frac{1}{(2n)!} \|f^{2n}\|_{A^1_\omega} \\ \nonumber
&\leq 2e^{\|f\|_{A^1_\omega}} \\ 
&\leq 2e 
\end{align} 
and so $\|r\sin(f+a)\|_{A^1_\omega}\leq 2re < \delta$. By Lemma \ref{bdtobd}, that is, \eqref{bss1}, 
\begin{equation}\label{eqn:Aqnorm<C}
    \|F_1(f+a)\|_{A^q}=\|F(r\sin(f+a))\|_{A^q}<C.
\end{equation}
The continuity of $F$ implies that $F_1$ is a continuous function on $\mathbb{T}^2$. So, $F_1$ has a Fourier series representation of the form
\begin{align}\label{fsf}
F_1(s_1,s_2) \sim \sum_{m,n\in\mathbb{Z}} c_{mn} e^{ims_1} e^{ins_2} \quad (s_1,s_2\in\mathbb{T}).    
\end{align}

Let $f\in A^1_\omega(\Gamma,\mathbb{R}^2)$ be a trigonometric polynomial such that $\|f\|_{A^1_\omega}<1$. Then $\|F_1(f+a)\|_{A^q}<C$ by \eqref{eqn:Aqnorm<C}.    For all $m,n\in\mathbb{Z}$  and $\gamma\in\Gamma$,
\begin{align*} 
c_{mn} e^{i(m,n) f(\gamma)} 
&= \frac{1}{(2\pi)^2} \int_0^{2\pi} \int_0^{2\pi} F_1(f(\gamma) + (t_1,t_2)) e^{-i(m,n)(t_1,t_2)} dt_1 dt_2 \\ 
&= \lim_{\substack{M,N\to\infty}} \frac{1}{MN} \sum_{j=1}^M \sum_{k=1}^N F_1\left( f(\gamma) + \left(\frac{2\pi j}{M}, \frac{2\pi k}{N} \right) \right) e^{-i\left(\frac{2\pi j}{M} + \frac{2\pi k}{N} \right)}. \end{align*} 
So, from Lemma \ref{lem:cont},  it follows that $$|c_{mn}| \|e^{i(m,n) f}\|_{A^q}\leq C. $$
And by Lemma \ref{sup} , taking supremum over all such $f\in A^1_{\omega}(\Gamma,\mathbb{R}^2)$, we have 
\begin{eqnarray}\label{fsfdecay}
    |c_{mn}|< C K_q^{-|(m,n)|} < C K_q^{-|m|-|n|}
\end{eqnarray}
 for some $K_q>1$ and all $m,n\in\mathbb{Z}$. Thus, the series $$\sum_{m,n\in\mathbb{Z}} c_{mn} e^{im(s_1+it_1)} e^{in(s_2+it_2)}$$ converges absolutely for all $t_1,t_2\in(-\log K_q,\log K_q)$.

It means that $$\widetilde{F_1}(s_1 + i t_1,s_2 + i t_2) =\sum_{m,n\in\mathbb{Z}} c_{mn} e^{im(s_1+it_1)} e^{in(s_2+it_2)} $$ defines a holomorphic function on $\{(s_1 + i t_1,s_2 + i t_2)\in\mathbb{C}^2:s_1,s_2\in\mathbb{R}, t_1,t_2\in(-\log K_q,\log K_q)\}$. Since $F_1(s)=F(r\sin(s))$ ($s\in\mathbb{R}^2$) is the restriction of $\widetilde{F_1}$ to $\mathbb{R}^2$, $F_1$ is real analytic  on $\mathbb R^2$, and it implies that $F$ is analytic in a neighborhood of (0,0) in $I$. 
Now, let $0\neq x_0$ be an interior point of $I$. Consider the translation of $F$ given by $F_{x_0}(x)=F(x+x_0)$. Then it can be shown similarly that $F_{x_0}$ is real analytic at $0$ which in turns implies that $F$ is real analytic at $x_0$.

Next, consider the boundary point $(s_0,t_0)\in I$ with $s_0=1$ and $|t_0|<1$, and consider the function $$F_2(s,t)=F(1-s^2,t^2) \quad (s\in[-1,1], t\in(-1,1)).$$ Then $F_2$ operates from $A^1_\omega(\Gamma,\mathbb{C}^2)$ to $A^q(\Gamma,\mathbb{C}^2)$ and since $F_2$ is an even function, for some $\epsilon>0$, $$F_2(s,t)=\sum_{m,n\in\mathbb{N}_0} c_{mn} s^{2m} t^{2n} \quad (s,t\in(-\epsilon,\epsilon)).$$
This gives $F(1-s,t)=\sum_{m,n\in\mathbb{N}_0} c_{mn} s^m t^n$ for $s,t\in[0,\epsilon^2)$ and so $F$ is analytic at $(s_0,t_0)$ by translation in second variable. The analyticity of $F$ at the remaining boundary points can be shown similarly. 

Lastly, if $I$ is any subset of $\mathbb{C}^2$, then take $$F_1(s_1,t_1,s_2,t_2)=F(r\sin(s_1),r\sin(t_1),r \sin(s_2),r \sin(t_2))$$ which will give the Fourier series 
$$F_1(s_1,t_1,s_2,t_2)=\sum_{m_1,m_2,n_1,n_2\in\mathbb{Z}} c_{m_1 m_2 n_1 n_2} e^{i(m_1 s_1 + m_2 t_1)} e^{i(n_1 s_2 + n_2 t_2)}$$
and the coefficients $c_{m_1 m_2 n_1 n_2}$ can be estimated as above.
\end{proof}

\begin{proof}[\textbf{Proof of Theorem \ref{mbk} \eqref{twovar}}]
Here we have $1\leq q <2$ and $\omega$ is a $q-$algebra weight. If $\Gamma$ is compact, then it follows from Theorem \ref{mbk} \eqref{twovarq}. Now assume that $\Gamma$ is not compact. If $\Gamma$ has an infinite compact subgroup, say $\Lambda$, and if $T_F$ acts on $A^q_{\omega}(\Gamma,\mathbb{C}^2)$, then $T_F$ acts on $A^q_{\omega}(\Lambda,\mathbb{C}^2)$   by an application of \cite[Theorem 2.7.4]{rb} which states that every function in $A^q_{\omega}(\Lambda,\mathbb{C}^2)$ is a restriction of a function in $A^q_{\omega}(\Gamma,\mathbb{C}^2)$. Thus, $F$ is real analytic by above arguments.

Now, if every compact subgroup of $\Gamma$ is finite, then the fact that $\Gamma$ is a non-discrete group (as $G$ is not compact) and the structure theorem \cite[Theorem 2.4.1]{rb} implies that $\Gamma$ has a closed subgroup which is isomorphic to $\mathbb{R}$. This means that $T_F$ acts on $A^q_{\omega}(\mathbb{R},\mathbb{C}^2)$ and thus on $A^q_{\omega}(\mathbb{T},\mathbb{C}^2)$. This completes the proof as $\mathbb{T}$ is compact.
\end{proof}

\begin{proof}[\textbf{Proof of Theorem \ref{mbk} \eqref{vecvalq} and \eqref{vecval}}]
The proof essentially follows from the same techniques as in Theorem \ref{mbk} \eqref{twovarq} and Theorem \ref{mbk} \eqref{twovar} along with the fact that $\mathbb{C}$ can be identified with a subalgebra of $\cX$, both having the same unit, through the identification $\alpha\in\mathbb{C} \to \alpha 1_\cX \in \cX$.
\end{proof}

Next we state a few corollaries of above results.

\begin{corollary}
For $1\leq q_1 \leq q_2 <2$, compact $\Gamma$, and admissible ($q_i-$algebra, if $q_i>1$) weights $\omega_i$ on $G$ for $i=1,2$ with $\omega_2 \lesssim \omega_1$, if $T_F$ maps $A^{q_1}_{\omega_1}(\Gamma,\mathcal Y)$ to $A^{q_2}_{\omega_2}(\Gamma,\cX)$ for $\cX=\mathcal Y=\mathbb{C}^2$ or $\mathcal Y=\mathbb{C}$ and $\cX$ a unital commutative Banach agebra, then $F$ is real analytic in the domain of $F$.
\end{corollary}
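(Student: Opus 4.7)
The plan is to reduce the statement to the already-established Theorem \ref{mbk} by chaining two elementary inclusions from the preliminaries. Since $\Gamma$ is compact, its dual $G$ is discrete, putting us in the setting of Theorem \ref{mbk} \eqref{twovarq} and \eqref{vecvalq}. The strategy is to replace the domain $A^{q_1}_{\omega_1}$ by the smaller space $A^1_{\omega_1}$ and the codomain $A^{q_2}_{\omega_2}$ by the larger space $A^{q_2}$, so that the hypothesis exactly matches the conclusion of the cited theorem.

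First I would invoke \eqref{ineqnormsdis2} with $p=1$, $q=q_1$, and weight $\omega_1$ to obtain $A^1_{\omega_1}(\Gamma,\mathcal Y) \subset A^{q_1}_{\omega_1}(\Gamma,\mathcal Y)$ (the inclusion is an equality when $q_1=1$). Hence the standing hypothesis forces $T_F(f) \in A^{q_2}_{\omega_2}(\Gamma,\cX)$ for every $f \in A^1_{\omega_1}(\Gamma,\mathcal Y)$ whose range lies in the domain of $F$. On the other side, the convention $\omega_2 \geq 1$ built into the definition of a weight gives $\|g\|_{A^{q_2}} \leq \|g\|_{A^{q_2}_{\omega_2}}$ and thus $A^{q_2}_{\omega_2}(\Gamma,\cX) \subset A^{q_2}(\Gamma,\cX)$. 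Composing the two inclusions shows that $T_F$ maps $A^1_{\omega_1}(\Gamma,\mathcal Y)$ into $A^{q_2}(\Gamma,\cX)$.

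With $\omega_1$ admissible, $G$ discrete, and $1\leq q_2 < 2$, I would then apply Theorem \ref{mbk} \eqref{twovarq} when $\cX = \mathcal Y = \mathbb{C}^2$, and Theorem \ref{mbk} \eqref{vecvalq} when $\mathcal Y = \mathbb{C}$ and $\cX$ is an arbitrary unital commutative Banach algebra, in both cases taking $\omega = \omega_1$ and $q = q_2$. Each conclusion asserts precisely that $F$ is real analytic on its domain, finishing the proof.

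No substantial obstacle is anticipated: the argument is purely formal, reducing the weighted two-sided statement to the already-proven weighted one-sided statement via monotonicity of the $A^p_\omega$ scale in both $p$ and $\omega$. The only bookkeeping worth emphasizing is the dual role played by the $\omega \geq 1$ convention together with \eqref{ineqnormsdis2}, which is exactly what lets us simultaneously shrink the source and enlarge the target to land in the hypothesis of Theorem \ref{mbk}.
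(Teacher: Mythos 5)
Your argument is correct and is essentially identical to the paper's own proof: both shrink the source via \eqref{ineqnormsdis2} ($A^1_{\omega_1}\subset A^{q_1}_{\omega_1}$) and enlarge the target via \eqref{ineqnormsdis} with the trivial weight ($A^{q_2}_{\omega_2}\subset A^{q_2}$ since $\omega_2\geq 1$), then invoke Theorem \ref{mbk} \eqref{twovarq} and \eqref{vecvalq}. Nothing further is needed.
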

\begin{proof}
    From \eqref{ineqnormsdis} in Lemma \ref{lem:weighted Fourier algebras inclusions}, it follows that $T_F$ maps $A^{1}_{\omega_1}(\Gamma,\mathcal Y)$ to $A^{q_2}(\Gamma,\cX)$. Thus the proof follows from Theorems \ref{mbk} \eqref{twovarq} and \eqref{vecvalq}.
\end{proof}

\begin{corollary}
For $1\leq q <2$, non-compact $\Gamma$, and admissible ($q-$algebra, if $q>1$) weights $\omega_i$ on $G$ for $i=1,2$ with $\omega_2 \lesssim \omega_1$, if $T_F$ maps $A^{q}_{\omega_1}(\Gamma,\mathcal Y)$ to $A^{q}_{\omega_2}(\Gamma,\cX)$ for $\cX=\mathcal Y=\mathbb{C}^2$ or $\mathcal Y=\mathbb{C}$ and $\cX$ a unital commutative Banach agebra, then $F$ is real analytic in the domain of $F$.
\end{corollary}

\subsection{Modulation and amalgam spaces}
Here we shall prove Theorem \ref{tfam} \eqref{nc1}: if the composition operator $T_F$ maps $\widehat{w}_\omega^{p,1}$ to $\widehat{w}^{p,q}$, then, necessarily, $F$ is real analytic on $\mathbb{R}^{2}.$ And also a similar necessity condition for modulation and Wiener amalgam spaces. We start with the following:

Recall that for $1\leq q<\infty$ and a weight $\omega$ on $\mathbb Z^d$, $A^q_\omega(\mathbb T^{d})$ is the class of all complex functions $f$ on the $d-$torus $\mathbb T^d$ whose Fourier coefficients
$$\widehat{f}(m)=\int_{\mathbb T^d}f(x)e^{-2\pi i m\cdot x} dx,   \ (m\in \mathbb Z^{d})$$
satisfy the condition
$$\|f\|_{A^{q}_\omega(\mathbb T^{d})}:=\|\widehat{f}\|_{\ell^{q}_\omega}<\infty.$$

We now define the local-in-time versions of the  Fourier amalgam, Wiener amalgam  and modulation spaces  in the following way. Given an interval $I=[0,1)^{d}$, let 
\begin{enumerate}
    \item $\widehat{w}_\omega^{p,q}(I)$ be the restriction of $\widehat{w}_\omega^{p,q}$ onto $I$ via 
\begin{eqnarray}\label{litwd}
\|f\|_{\widehat{w}_\omega^{p,q}(I)}:=\inf \{\|g\|_{\widehat{w}_\omega^{p,q}}:g=f \ \text{on} \ I \}.
\end{eqnarray}
    \item $W_\omega^{p,q}(I)$ be the restriction of $W_\omega^{p,q}$ onto $I$ via 
\begin{eqnarray}
\|f\|_{W_\omega^{p,q}(I)}:= \inf \{\|g\|_{W_\omega^{p,q}}:g=f \ \text{on} \  I \}.
\end{eqnarray}
    \item $M_\omega^{p,q}(I)$ be the restriction of $M_\omega^{p,q}$ onto $I$ via
\begin{eqnarray}
\|f\|_{M_\omega^{p,q}(I)}=\inf \{\|g\|_{M_\omega^{p,q}}:g=f \ \text{on} \  I \}.
\end{eqnarray}
\end{enumerate}

Now we introduce periodic modulation and amalgam spaces. Using the identification of $I=[0,1)^d$ with $\mathbb{T}^d$ via the mapping $(x_1,x_2,\dots,x_d)\in I \mapsto (e^{2\pi i x_1},e^{2\pi i x_2},\dots,e^{2\pi i x_d})\in\mathbb{T}^d$, we define 
\begin{align} \label{def:periodicspaces}
    \widehat{w}_\omega^{p,q}(\mathbb{T}^d):=\widehat{w}_\omega^{p,q}(I), \quad W_\omega^{p,q}(\mathbb{T}^d):=W_\omega^{p,q}(I)\quad \text{and} \quad M_\omega^{p,q}(\mathbb{T}^d):=M_\omega^{p,q}(I).
\end{align}

The next result ensures that the modulation, Wiener amalgam and Fourier amalgam spaces coincide with the classical weighted Fourier algebra. Specifically, we have

\begin{proposition}[see Lemma 1 in \cite{ko}, Section 5 in \cite{moT} and  \cite{forlano2020deterministic,BhiStrongIll}]  \label{B1} 
For $1\leq p , q \leq \infty$,
$$\widehat{w}^{p,q}(\mathbb T^{d}) = W^{p,q}(\mathbb{T}^d) = M^{p,q}(\mathbb{T}^d) = A^{q}(\mathbb T^{d}),$$ 
with norm inequality
$$\|f\|_{\widehat{w}^{p,q}(\mathbb T^d)} \asymp \|f\|_{W^{p,q}(\mathbb{T}^d)} \asymp \|f\|_{M^{p,q}(\mathbb T^d)} \asymp \|f\|_{A^{q}(\mathbb T^d)}.$$
\end{proposition}
The detailed proof  of Proposition  \ref{B1} for the modulation and Wiener amalgam spaces is provided in  \cite[Lemma 1]{ko} and \cite[Section 5]{moT}. While  for the Fourier amalgam spaces, we shall  provide the proof in  Appendix \ref{apd} (Proposition \ref{faT}).

\begin{proposition} \label{promaps}
Let $1\leq p \leq \infty$, $1\leq q <2$,  $\omega$ be an admissible weight on $\mathbb{Z}^d$, and $$(X,Y)=(M_{\omega}^{p,1},M^{p,q}) \ \text{or} \ (W_{\omega}^{p,1},W^{p,q}) \ \text{or} \ (\widehat{w}_{\omega}^{p,1},\widehat{w}^{p,q}).$$ Suppose that $T_{F}$ is the composition operator associated to a complex function $F$ on $\mathbb C$. If $T_{F}$ maps $X$ to $Y$, then $T_{F}$ maps  $A_{\omega}^{1}(\mathbb T^{d})$ to $A^q(\mathbb T^{d}).$
\end{proposition}
\begin{proof}
First, we consider the case $(X,Y)=(\widehat{w}_{\omega}^{p,1},\widehat{w}^{p,q}).$ Let $ f\in A^1_\omega(\mathbb{T}^d).$ Then $f^{\ast}(x)=f(e^{2 \pi ix_1},..., e^{2 \pi ix_d})$ is a periodic function on $\mathbb R^d$ with absolutely convergent Fourier series $$f^*(x)=  \sum_{m \in \mathbb Z^d} \widehat{f} (m)\, e^{2 \pi i m \cdot x} .$$ 
Choose $g \in \widehat{w}^{p,1}_\omega(\mathbb R^d)$ such that $g\equiv1$ on $I=[0, 1)^d.$ Then we claim that
$gf^{\ast}\in   \widehat{w}_\omega^{p,1}.$ Once the claim is assumed, by hypothesis, we have
\begin{eqnarray}\label{rh}
F(g  f^{\ast})\in   \widehat{w}^{p,q}.
\end{eqnarray}
Note that if $z\in \mathbb T^d$, then $z= (e^{2\pi ix_1},... , e^{2\pi ix_d})$ with some unique $x=(x_1,... ,x_d) \in I$, hence 
\bea\label{cnct} F(f(z))=F( f^*(x))= F( (g  f^{\ast}) (x)).\eea
Now if $\phi\in C_c^\infty(\mathbb T^d)$, then $ g \phi^*$ is a compactly supported function on $\R^d$. Also $\phi(z) = g(x) \phi^*(x)$ for every $x \in I$, as per the notation above and hence 
\bea \label{3.2} \phi(z) F(f)(z)= g(x)\phi^*(x) F(gf^*)(x),\eea for some $ x \in I$.

By \eqref{3.2}, Proposition \ref{B1}, \eqref{def:periodicspaces}, \eqref{litwd} and Proposition \ref{proalgebra} \eqref{faminc},   
we obtain
\begin{eqnarray}
\|\phi F(f)\|_{ A^{q}(\mathbb T^d)}
& = & \| g \phi^* F(gf^{\ast})\|_{ A^{q}(\mathbb T^d)}\nonumber \\
& \asymp & \|g\phi^* F(gf^*)\|_{\widehat{w}^{p,q}(\mathbb T^{d})} \nonumber\\
& = & \|g\phi^* F(gf^*)\|_{\widehat{w}^{p,q} (I)}  \nonumber \\
& \lesssim & \|g \phi^{\ast} F(gf^*)\|_{\widehat{w}^{p,q}} \nonumber \\
& \lesssim & \|g\phi^*\|_{A^1} \|F(gf^*)\|_{\widehat{w}^{p,q}},\nonumber 
\end{eqnarray}
which is finite for every smooth cutoff function $\phi$ supported on $I$ by \eqref{rh}.  Now, by the compactness of $\T^d$, a partition of unity argument shows that $F(f) \in A^{q}(\mathbb T^d)$.

So, it just remains to prove the claim. For that, let $\mu=\sum_{k\in \mathbb Z^{d}}c_{k}\delta_{k}$, where $c_{k}=\widehat{f}(k)$ and  $\delta_{k}$ is the unit Dirac mass at $k.$  Then it is clear that $\mu$ is a complex Borel measure on $\mathbb R^{d},$ and the  total variation of $\mu, $ that is, $\|\mu\|= |\mu|(\mathbb R^{d})= \sum_{k\in \mathbb Z^{n}} |c_{k}|$ is finite. The Fourier-Stieltjes transform of $\mu$ is given by
\begin{eqnarray}
\widehat{\mu}(y)  
= \int_{\mathbb R^{d}} e^{-2\pi ix\cdot y} d\mu(x)
= \int_{\mathbb R^{d}} e^{-2\pi ix\cdot y} \left( \sum_{k\in \mathbb Z^{d}} c_{k}d\delta_{k}(x) \right)
= \sum_{k\in \mathbb Z^{d}} c_{k} \int_{\mathbb R^{d}} e^{-2\pi ix\cdot y} d\delta_{k} (x) 
= f^{\ast}(-y).\nonumber
\end{eqnarray}
So, $\widehat{f^\ast} = \mu = \sum_{m \in \mathbb Z^d} \widehat{f} (m) \, \delta_{m}.$ It follows that 
$$  \widehat{g} \ast \widehat{f^{\ast}}=  \sum_{m \in \mathbb Z^d} \widehat{f} (m)  \,  \widehat{g} \ast \delta_{m}.$$
Note that 
\begin{eqnarray*}
\|gf^*\|_{\widehat{w}^{p,1}_{\omega}}  =  \left\| \left\| \chi_{n+Q} \sum_{m \in \mathbb Z^d} \widehat{f} (m) \,  \widehat{g} \ast \delta_{m} \right\|_{L^p} \right\|_{\ell^{1}_{\omega}} \lesssim  \|f\|_{A^1_\omega(\mathbb T^d)} \|g\|_{\widehat{w}^{p,1}_{\omega}} < \infty.
\end{eqnarray*}
This proves the claim and hence the theorem for Fourier amalgam space.
Now, if $T_F$ maps $X$ to $Y$, whether $(X,Y)$ is either $(M_{\omega}^{p,1},M^{p,q})$ or  $(W_{\omega}^{p,1},W^{p,q})$, then using similar arguments as above and using Proposition \ref{proalgebra} \eqref{mwaminc}, the proof concludes.
\end{proof}

\begin{proof}[\textbf{Proof of Theorem \ref{tfam}}]
If $T_{F}$ maps $X$ to $Y$, then  $T_{F}$ maps  $A_{\omega}^{1}(\mathbb T^{d})$ to $A^q(\mathbb T^{d})$ by Proposition \ref{promaps}. Thus, the proof follows from Theorem \ref{mbk} \eqref{vecvalq} by taking $\Gamma=\mathbb{T}^d$, that is, $G=\mathbb{Z}^d$ and $\cX=\mathbb C$. Moreover, the condition $F(0)=0$ if $p<\infty$ follows from the fact that $\widehat{w}_\omega^{p,q} \subset \widehat{w}^{p,1} \subset \widehat{w}^{1,1} = A^1(\mathbb{R}^d)$ and by arguments similar to that given in \cite[Theorem 1.1 (i)]{db} for modulation and Wiener amalgam spaces.
\end{proof}

\section{Sufficient conditions} \label{sec:suff}
In this section, we shall prove Theorem \ref{suff} \eqref{ii} for weighted Fourier algebras on $\mathbb{R}^d$ and these results can be seen as the $q-$th power weighted continuous analogue of the Wiener-L\'evy theorem. The proof of Theorem \ref{suff} \eqref{i} follows using similar arguments. Then we shall show Theorem \ref{tfam} \eqref{sc} for modulation and amalgam spaces. We note that our approach of the proof is inspired by the weighted analogue of Wiener-L\'evy theorem in \cite{reiter}.

\subsection{Weighted Fourier algebra}
First, we show a result for all locally compact abelian group $G$ which gives a characterization for a function to be in $A^q_\omega(\Gamma,\cX)$. We begin with the following definition.

\begin{definition} \label{def:belongslocally}
Let $1\leq q <\infty$, $\omega$ be an admissible weight which is a $q-$algebra weight if $q>1$, and let $\phi:\Gamma \to \cX$ be a function. 
\begin{enumerate}
\item We say that $\phi$ belongs to $A^q_\omega(\Gamma,\cX)$ locally at point $\gamma_{0}\in \Gamma$ if there is a neighborhood $V$ of $\gamma_{0}$ and a function $h\in A^q_\omega(\Gamma,\cX)$ such that $\phi(\gamma)= h(\gamma)$ for every $\gamma \in V.$
\item If $\Gamma$ is not compact, then we say that $\phi$ belongs to $A^q_\omega(\Gamma,\cX)$ locally at $\infty$ if there is a compact set $K\subset \Gamma$ and  a function $h\in A^q_\omega(\Gamma,\cX)$ such that $\phi(\gamma)=h(\gamma)$ for every $\gamma \in \Gamma\setminus K$.
\end{enumerate} 
\end{definition}

The next lemma gives the useful criterion for functions to be in $A^q_\omega(\Gamma,\cX)$.

\begin{lemma} \label{lem:suff}
Let $1\leq q <\infty$, and let $\omega$ be an admissible weight which is a $q-$algebra weight if $q>1$. If $\phi$ belongs to $A^q_\omega(\Gamma,\cX)$ locally at every point of  $\Gamma$ (and at $\infty$ if $\Gamma$ is not compact), then $\phi \in A^q_\omega(\Gamma,\cX).$
\end{lemma}
\begin{proof}
Suppose that $\phi$ has a compact support $K$. By hypothesis, for each $\gamma \in K,$ there is a neighborhood $V_{\gamma}$ of $\gamma$ and $h_{\gamma} \in A^q_\omega(\Gamma,\cX)$ such that $\phi(\xi) = h_{\gamma}(\xi)$ for all $\xi\in V_{\gamma}.$ Since $\{V_{\gamma}: \gamma \in K \}$ forms an open cover of the compact set $K$, there are finite open sets, say $V_{\gamma_{1}}, V_{\gamma_{2}}, \dots, V_{\gamma_{n}}$, such that $K\subset \cup_{j=1}^{n} V_{\gamma_{j}}$. Let $h_{j}=h_{\gamma_j}\in A^q_\omega(\Gamma,\cX)$ for $1\leq j \leq n$. Then $\phi= h_{j}$ in $V_{\gamma_{j}}$ for $1\leq j\leq n$. Note that there exists \noindent
\begin{enumerate}
    \item open sets $W_{1},W_2,\dots, W_{n}$ with compact closures $\overline{W_{j}} \subset V_{\gamma_{j}}$ such that $K\subset \cup_{j=1}^{n}W_{j}$; and
    \item $k_{j}\in A^q_\omega(\Gamma,\cX)$ such that $k_{j}\equiv1_\cX$ on $\overline{W_{j}}$ and $k_{j}\equiv0$ outside $V_{\gamma_{j}}$ by Theorem \ref{Domar} and Remark \ref{Domarvector}.
\end{enumerate}
This implies that $\phi(\gamma) k_{j}(\gamma)=h_{j}(\gamma)k_{j}(\gamma)$ for all $\gamma\in\Gamma$ and $\phi k_{j}=h_{j} k_{j}\in A^q_\omega(\Gamma,\cX)$ for $1\leq j \leq n$ as $A^q_\omega(\Gamma,\cX)$ is an algebra. Now, if we  put
\begin{eqnarray}
\label{bs}
\psi=\phi\{1-(1-k_{1}) (1-k_{2})...(1-k_{n})\},
\end{eqnarray}
then $\psi\in A^q_\omega(\Gamma,\cX)$. The multiplier of $\psi$ in \eqref{bs} is 1 whenever one of $k_{i}$ is 1, and this happens at every point of $K$ and  $\phi=0$ outside $K$. Hence, $\phi=\psi \in A^q_\omega(\Gamma,\cX)$.

Now, if $\phi$ does not have compact support, then $\phi$ belongs to $A^q_\omega(\Gamma,\cX)$ locally at $\infty$ and so there is a function $g\in A^q_\omega(\Gamma,\cX)$ which coincides with $\phi$ outside some compact subset of $\Gamma$. Then $\phi - g$ has compact support and belongs to $A^q_\omega(\Gamma,\cX)$ locally at every point of $\Gamma$. So, by the first case, $\phi- g\in A^q_\omega(\Gamma,\cX)$ and $\phi \in A^q_\omega(\Gamma,\cX)$. This completes the proof.
\end{proof}

Now we turn to our main theorem in this section, which is for $G=\mathbb{R}^d=\Gamma$. First, we show a lemma which shows that $A^q_\omega(\mathbb{R}^d,\cX)$ has approximate identity.

\begin{proposition} \label{prop:suffR2}
Let $1\leq q <\infty$, $\omega$ be an admissible weight of regular growth (see Definition \ref{def:weight of regular growth}) which is a $q-$algebra weight if $q>1$, and let $f\in A^q_\omega(\mathbb{R}^d,\cX)$. For given $\epsilon >0$, there exists $\psi\in A^q_\omega(\mathbb{R}^d,\cX)$ with compact support such that $\| \psi f - f\|_{A^q_\omega(\mathbb{R}^d,\cX)} < \epsilon$. 
\end{proposition}
\begin{proof}
In view of Theorem \ref{Domar} and Remark \ref{Domarvector}, choose $\phi\in A^q_{\omega}(\mathbb{R}^d,\cX)$ such that $\phi(0)=1_\cX$ and support of $\phi$ is in the unit ball $B_1(0)=\{x\in\mathbb{R}^d:|x|<1\}$. For each $0<\lambda<1$, define 
\begin{align*}
    &\phi_\lambda(x)=\phi(\lambda x) \quad (x\in\mathbb{R}^d).
\end{align*}
Then $\phi_\lambda $ $ \in A^q_\omega(\mathbb{R}^d,\cX)$ as $\omega$ is of regular growth and its support is in $B_{\lambda^{-1}}(0)$. Using $1_\cX=\phi(0)= \int_{\mathbb{R}^d} \widehat{\phi}(y) dy$ and $(\widehat{\phi_\lambda}\ast\widehat{f})(\xi)=\int_{\mathbb{R}^d}\widehat{\phi}(y) \widehat{f}(\xi-\lambda y) dy$, we have
\begin{align*}
(\widehat{\phi_\lambda} \ast \widehat{f})(\xi) - \widehat{f}(\xi) = \int_{\mathbb{R}^d} \widehat{\phi}(y) (\widehat{f}(\xi-\lambda y) - \widehat{f}(\xi) ) dy.
\end{align*}
By Minkowski's inequality for integrals, we get
\begin{align}\label{eqn:5.1,1}
\|\phi_\lambda f-f\|_{A^q_\omega} \lesssim \int_{\mathbb{R}^d} \|\widehat{\phi}(y)\| \|\widehat{f}(\cdot-\lambda y) - \widehat{f}(\cdot) \|_{L^q_\omega} dy.
\end{align}
Note that for each $y\in\mathbb{R}^d$, 
\begin{align} \label{eqn:finiteintegral}
\nonumber
    \| \widehat{f}(\cdot-\lambda y) - \widehat{f}(\cdot) \|_{L^q_\omega} 
    &\leq \| \widehat{f}(\cdot-\lambda y)\|_{L^q_\omega} + \|\widehat{f}(\cdot) \|_{L^q_\omega} \\ \nonumber
    &= \left( \int_{\mathbb{R}^d} \|\widehat{f}(\xi-\lambda y) \|^q \omega(\xi-\lambda y+ \lambda y)^q d\xi \right)^\frac{1}{q} + \left( \int_{\mathbb{R}^d} \|\widehat{f}(\xi)\|^q \omega(\xi)^q d\xi \right)^\frac{1}{q} \\ \nonumber
    &\leq  \omega(\lambda y)\left( \int_{\mathbb{R}^d} \|\widehat{f}(\xi-\lambda y) \|^q \omega(\xi-\lambda y)^q d\xi \right)^\frac{1}{q} + \|\widehat{f}\|_{L^q_\omega} \\ 
    &\lesssim \omega(y) \|\widehat{f}\|_{L^q_\omega}
\end{align} as $\omega$ is a weight of regular growth and $0<\lambda<1$. And thus the integral in \eqref{eqn:5.1,1} is finite. Also, observe that $\displaystyle \lim_{\lambda \to 0} \left\|\phi_\lambda f -  f \right\|_{A^q_\omega} = 0$. So, choose $\lambda$ such that $\|\phi_\lambda f -  f \|_{A^q_\omega}<\epsilon$ and take $\psi=\phi_\lambda$.
\end{proof}

Moreover, we have the following lemma that can be seen as a corollary of the above result.

\begin{lemma} \label{lem:suffR1}
Let $1\leq q <\infty$, $\omega$ be an admissible weight of regular growth which is a $q-$algebra weight if $q>1$, and let $f,\phi\in A^q_\omega(\mathbb{R}^d,\cX)$. Then 
\begin{align}
    \left\| \widehat{\phi_\lambda} \ast \widehat{f} - \left( \int_{\mathbb{R}^d} \widehat{\phi}(y) dy \right) \widehat{f} \ \right\|_{L^q_\omega} \to 0 \quad \text{as} \quad \lambda\to0,
\end{align}
where $\phi_\lambda(x)=\phi(\lambda x)$ for $x\in\mathbb{R}^d$.
\end{lemma}

Now, we are ready for the proof of our main theorem.
\begin{proof}[\textbf{Proof of Theorem \ref{suff} \eqref{ii}}]
Let $f= f_1+ if_2 \in A^q_\omega(\mathbb{R}^d)$ be such that image of $f$ is in $I$, where $f_1$ and $f_2$ are real valued functions, and with an abuse of notation, we write $F(f) = F(f_1,f_2)$. To show that $F(f) \in A^q_\omega(\mathbb{R}^d,\cX)$, by Lemma \ref{lem:suff}, it is enough to show that $F(f)$ belongs to $A^q_\omega(\mathbb{R}^d)$ locally at each point of $\mathbb{R}^d$ and at infinity.

First we show that $F(f)$ belongs to $A^q_\omega(\mathbb{R}^d)$ locally at each point of $\mathbb{R}^d$. Fix $t_{0} \in \R^d $ and let $z_0=f(t_0)\in\mathbb{C}$. 
Since $F$ is real analytic at $z_0=x_0+iy_0$, there is some $\delta>0$ such that $F$ has series representation 
\begin{align}\label{eqn:power series of F}
F(z)=F(x,y)=F(z_0) + \sum_{(m,n)\in\mathbb{N}_0^2\setminus\{(0,0)\}} c_{mn} (x-x_0)^m (y-y_0)^n,
\end{align}
where $c_{mn}\in\cX$ and it absolutely converges in the norm of $\cX$ for all $z=x+iy$ with $|x-x_0|<\delta$ and $|y-y_0|<\delta$.
Then 
\begin{align}\label{eq:aq2}
F(f_1(t),f_2(t)) =  F(x_{0}, y_{0}) + \sum_{(m,n)\in\mathbb{N}_0^2\setminus\{(0,0)\}} c_{mn} [f_1(t)-f_1(t_{0})]^{m} [f_2(t)-f_2(t_{0})]^{n}
\end{align}
whenever the series converges. 
Note that both $f_1$ and $f_2$ are in $A^q_\omega(\mathbb{R}^d)$, being the real and imaginary part of $f$.

Let $\phi\in A^q_\omega(\mathbb{R}^d)$ be such that $\phi\equiv1$ in some neighborhood $V$ of $0$. For $\lambda>0$, define
\begin{align}\label{def:g_lambda}
g_\lambda(t)= \left[f\left(\lambda t + t_0\right) - f(t_0) \right] \phi(t) \quad (t\in\mathbb{R}^d).
\end{align}
Then $\widehat{g_\lambda}=\widehat{R_{t_0} f_\lambda} \ast \widehat{\phi} - (R_{t_0}f)(0) \widehat{\phi}$, where $(R_{t_0}f)(t)=f(t+t_0)$ and $f_\lambda(t)=f(\lambda t)$ for all $t\in \mathbb{R}^d$ and from Lemma \ref{lem:suffR1} it follows that that $g_\lambda\in A^q_\omega(\mathbb{R}^d)$ and $\|g_\lambda\|_{A^q_\omega}<\delta$ for some $\lambda$ near to $0$ and we now fix this $\lambda$.
Now, writing $g_\lambda=g_{\lambda,1} + i g_{\lambda,2}$, see that the function 
\begin{align}
h(t)=F(z_0) \phi(t) + \sum_{(m,n)\in\mathbb{N}_0^2\setminus\{(0,0)\}} c_{mn} g_{\lambda,1}(t)^m g_{\lambda,2}(t)^n
\end{align}
belongs to $A^q_\omega(\mathbb{R}^d,\cX)$ as the series on right side converges absolutely because $\|g_{\lambda,j}\|_{A^q_\omega}\leq\|g_\lambda\|_{A^q_\omega}<\delta$ for $j=1,2$.
So, if we set $\displaystyle g_{t_0}=h\left( \frac{t-t_0}{\lambda} \right)$. Then $g_{t_0}\in A^q_\omega(\mathbb{R}^d,\cX)$ and $g_{t_0}(t)=F(f(t))$ in $V_{t_0}=\{x+t_0:x\in V\}$ which is an open neighborhood of $t_0$. Thus, $F(f)$ belongs to $A^q_\omega(\mathbb{R}^d,\cX)$ locally at $t_0\in\mathbb{R}^d$. Since $t_0$ was chosen arbitrarily, $F(f)$ belongs to $A^q_\omega(\mathbb{R}^d,\cX)$ locally at each $t\in\mathbb{R}^d$.

To conclude we show that $F(f)$ belongs to $A^q_\omega(\mathbb{R}^d,\cX)$ locally at $\infty$. Taking $z_0=0$, that is, $(x_0,y_0)=(0,0)$ in equation \eqref{eqn:power series of F} and using the fact that $F(0)=0$,  the expansion \eqref{eq:aq2} becomes
\begin{align} \label{eq:aq1}
F(f_1(t),f_2(t)) = \sum_{(m,n)\in\mathbb{N}_0^2\setminus\{(0,0)\} } c_{mn} \, [f_1(t)] ^{m} \, [f_2(t)]^{n},
\end{align}  
whenever the series converges.

By Proposition \ref{prop:suffR2}, there is $\psi\in A^q_\omega(\mathbb{R}^d)$ such that $\psi\equiv1$ in some neighborhood $V$ of $0$, $\psi\equiv0$ outside compact set $K$ with $V\subset K$ and $\| (1-\psi) f_i\|_{A^q_\omega} = \| \psi f_i - f_i \|_{A^q_\omega} < \delta$ for $i=1,2$. Now consider the function $h$ defined by
\begin{align}
h(t)= \sum_{(m,n)\in\mathbb{N}_0^2\setminus\{(0,0)\} } c_{mn} \, [(1-\psi(t))f_1(t) ]^m \, [(1-\psi(t))f_2(t) ]^n.    
\end{align}
The above series is absolutely convergent in $A^q_\omega(\mathbb{R}^d,\cX)$, in view of the above norm estimates, hence $h \in A^q_\omega(\mathbb{R}^d,\cX)$. Moreover, the fact that $\psi$ is compactly supported in $K$ implies that $1-\psi \equiv 1 $ outside $K$ which gives $h=F(f)$ outside $K$. This shows that $F(f)$ belongs to $A^q_\omega(\mathbb{R}^d,\cX)$ locally at $\infty$. This completes the proof. 

\end{proof}

%%%%%%%%%%%%%%%%%%%%%%%%
\subsection{Modulation and amalgam spaces}
Here we shall provide the proof of Theorem \ref{tfam} \eqref{sc}. First, we state some known results and provide some results required for proving it. We follow the outline of proof given in \cite{db} for $M^{p,1}$ and $W^{p,1}$.

Throughout this section $X$ is one of the space $M^{p,q}_{\omega}(\mathbb{R}^d)$, $W^{p,q}_\omega(\mathbb{R}^d)$ or $\widehat{w}^{p,q}_\omega(\mathbb{R}^d)$ for $1\leq p,q <\infty$ and $\omega$ is an admissible ($q-$algebra, if $q>1$) weight of regular growth on $\mathbb{R}^d$. 

First we provide an definition analogue to Def. \ref{def:belongslocally} for $X$.
\begin{definition}
Let $\phi$ be a function defined on $\mathbb R^{d}$. 
\begin{enumerate}
\item We say that $\phi$ belongs to $X$ locally at point $\gamma_{0}\in \mathbb R^{d}$ if there is a neighborhood $V$ of $\gamma_{0}$ and a function $h\in X$ such that $\phi(\gamma)= h(\gamma)$ for every $\gamma \in V.$
\item We say that $\phi$ belongs to $X$ locally at $\infty$ if there is a compact set $K\subset \mathbb R^{d}$ and  a function $h\in X$ such that $\phi(\gamma)=h(\gamma)$ in the complement of $K.$
\end{enumerate} 
\end{definition}
For convenience, denote the collection of functions belonging to $X$ locally at each $\gamma\in\mathbb{R}^d$ by $X_{loc}$, the collection of all functions belonging to $X$ locally at $\infty$ by $X_\infty$, and let $X_{loc,\infty} = X_{loc} \cap X_{\infty}$.

The next lemma gives the useful criterion for functions to be in $X$.

\begin{lemma} \label{lsuffma}
If $\phi$ belongs to $X$ locally at every point of  $\mathbb R^{d} \cup \{\infty \} $, then $\phi \in X.$
\end{lemma}
\begin{proof}
The proof goes on the same line as that of Lemma \ref{lem:suff} and uses Urysohn's lemma and the facts that $\mathbb{R}^d$ is a locally compact Hausdorff space and the algebra property of $X$ from Proposition \ref{proalgebra} \eqref{algebra}.
\end{proof}

The next proposition has been first established in \cite{db} for $M^{p,1}$ and $W^{p,1}$ and in \cite{HGWL1, HGWL2} for $M^{p,q}_{\omega_s}$, where $1\leq p \leq \infty$ and Japanese weight $\omega_s=(1+|x|^2)^\frac{s}{2}$ with $s=0$ for $q=1$ and $s>\frac{d}{q'}$ for $1<q<\infty$. We note that the same techniques can be applied for the proofs here for the weights of regular growth. And with this in mind, we avoid giving the proof in detail.

\begin{proposition}\label{kp} 
Let $ f\in X$, and let $\epsilon>0.$ There exists a $\psi \in X$ having compact support such that $\| (1-\psi) f \|_{X}<\epsilon$.
\end{proposition}
\begin{proof} 
The proof follows by taking $\phi\in X$ having compact support with $\phi(0)=1$, defining $\phi_\lambda(x)=\phi(\lambda x)$ for $x\in\mathbb{R}^d$, $0<\lambda\leq1$ and employing the techniques used in Proposition \ref{prop:suffR2} with minor modifications.
\end{proof}

The proof of Theorem \ref{tfam} \eqref{sc} follows using the lines of arguments presented in the proof of Theorem \ref{suff} \eqref{ii} and uses Proposition \ref{kp} in stead of Proposition \ref{prop:suffR2}.

\section{Appendix}\label{apd}

Even though we need only unweighted case, we prove Proposition \ref{B1}, in more generality, in setting of Japanese weight. It shall be noted that we do not need the algebra structure here. It follows from the following proposition.

\begin{proposition}\label{faT} Let $1\leq p,q < \infty$, $\omega_s(x)=\langle x \rangle^s=(1+|x|^2)^\frac{s}{2}$ $(x\in\mathbb{R}^d, s\geq0)$ be the Japanese weight on $\mathbb{R}^d$, $f$ be a function on $\mathbb R^d,$ and let $\phi$ be a smooth cut-off function supported on $\mathbb T^d=[0, 2\pi)^d$. Then
\begin{enumerate}
\item \label{1} $ \|\phi f\|_{A^{q}_{\omega_s}(\mathbb T^d)} \lesssim \|f\|_{\widehat{w}^{p,q}_{\omega_s}},$
\item \label{2} $\|\phi f\|_{\widehat{w}^{p,q}_{\omega_s}}  \lesssim  \|f\|_{A^{q}_{\omega_s}(\mathbb T^d)}.$
\end{enumerate}
\end{proposition}
\begin{proof} 
\eqref{1} Put $Q=[-1/2, 1/2)^d$. Then $\sum_{n \in \mathbb Z^d} \chi_{Q} (\xi-n)\equiv 1$ for all $\xi\in\mathbb{R}^d$. Let $g$ be a smooth cutoff function supported on $[-1,1]^d$ such that $g(\xi)=1$ for $\xi\in Q$. Then $\chi_{Q}(\xi) g(\xi)= \chi_{Q}(\xi)$ for all $\xi \in \mathbb R^d.$
For $n\in\mathbb{Z}^d$, we have
\begin{eqnarray*}
\widehat{\phi f}(n) 
& = &  \int_{\mathbb T^d } \phi (y) f(y) e^{-iny} dy\\
& = & \int_{\mathbb R^d}  \phi (y) f(y) e^{-iny} dy \\
& = & (\widehat{ \phi} \ast \widehat{f}) (n)\\
& = & \sum_{m \in \mathbb  Z^d} \int_{\mathbb R^d} \chi_{Q} (\xi-m) \widehat{f}(\xi) \overline{\widehat{\phi} (\xi-n)} d\xi\\
& = & \sum_{m \in \mathbb  Z^d} \int_{\mathbb R^d} \chi_{Q} (\xi-m) \widehat{f}(\xi) g(\xi-m)\overline{  \widehat{ \phi} (\xi-n)} d\xi.
\end{eqnarray*}
Note that $g, \widehat{\phi} \in \mathcal{S}(\mathbb R^d)$ and so  $|\widehat{\phi}(\xi)| \leq  \frac{C_s}{(1+ |\xi|^2)^{s/2}} $ for all $\xi \in \mathbb R^d$ and $s\geq 0.$
For $m\neq n \in \mathbb Z^d,$ we note that
\begin{eqnarray*}
\left\| g(\xi-m)\overline{  \widehat{ \phi} (\xi-n)} \right\|_{L^{p'}_{\xi}}^{p'} & = & \int_{|\xi-m|_{\infty}\leq 1} \left|g(\xi-m)\overline{  \widehat{ \phi} (\xi-n)}\right|^{p'} d\xi\\
&= & \int_{|\xi|_{\infty}\leq 1} \left|g(\xi)\overline{  \widehat{ \phi} (\xi +m-n)}\right|^{p'} d\xi\\
& \lesssim & \|g\|_{L^{\infty}(|\xi|_{\infty}\leq 1)}  \int_{|\xi|_{\infty}\leq 1} \left|\overline{  \widehat{ \phi} (\xi +m-n)}\right|^{p'} d\xi\\
& \lesssim  &  \frac{1}{\langle \xi + m-n  \rangle^{p'(s+2)}}  \quad \quad (|\xi|_{\infty} \leq 1)
\end{eqnarray*}
For  $|\xi|_{\infty} \leq 1$ and  $|n-m|_{\infty} \geq 2,$ we  have $|\xi| \leq |n-m|_{\infty}/2$ and $|n-m + \xi|_{\infty} \geq \frac{1}{2} |n-m|_{\infty}$ and hence $C \langle \xi + m-n  \rangle^{-(s+2)} \leq C4^{(s+2)/2} \langle m-n  \rangle^{-(s+2)}$.  For $m\neq  n$ and $|n-m|_{\infty}<2$ ,  we have $|m-n|_{\infty}=1$ and so 
$C \langle \xi + m-n  \rangle^{-(s+2)} \lesssim  C= 2^{(s+2)/2}C \langle m-n  \rangle^{-(s+2)}.$ 
Now,  it follows that 
\begin{equation}\label{ie}
\left\| g(\cdot-m)\overline{  \widehat{ \phi} (\cdot-n)} \right\|_{L^{p'}} \lesssim    \frac{1}{\langle  m-n  \rangle^{(s+2)}}. 
\end{equation}
By H\"older's inequality and \eqref{ie},  we have 
\begin{eqnarray}
|\widehat{\phi f}(n)|  & \lesssim   &  \sum_{m \in \Z^d}  \frac{1}{\langle  m-n  \rangle^{(s+2)}} \|\chi_{Q}(\cdot -m) \widehat{f}\|_{L^p}
\end{eqnarray}
By $\langle n \rangle^s\lesssim \langle n-m \rangle^s \langle m \rangle^s $ for $s\geq 0,$ \eqref{ie} and Young's  inequality we obtain
\begin{eqnarray*}
\|\phi f\|_{\mathcal{F}L^{q}_s(\mathbb T^d)} & = & \| \langle n \rangle^s \widehat{\phi f}(n)\|_{\ell^q} \lesssim \left\| \sum_{m \in \Z^d} \frac{1}{\langle n-m \rangle^2} \langle m \rangle^s \|\chi_{Q}(\cdot -m) \widehat{f}\|_{L^p}  \right\|_{\ell^q}\\
& \lesssim & \left\| \langle n \rangle^s \|\chi_{Q}(\cdot -n) \widehat{f}\|_{L^p}  \right\|_{\ell^q}= \| f\|_{\widehat{w}^{p,q}_s}   \\
\end{eqnarray*}

\noindent
\eqref{2} Suppose that $\psi$ is smooth cutoff  function supported on $[-1/2+\epsilon, 1/2+ \epsilon)^d$ for some $\epsilon>0.$  Let $n \in \mathbb Z^d.$ Then,  we have 
\begin{eqnarray*}
\mathcal{F}^{-1} (\psi (\xi -n) \widehat{\phi f} (\xi))(x) & = & \int_{\R^d} \psi (\xi-n) \widehat{\phi f} (\xi) e^{i\xi x} d\xi\\
& = & \int_{\mathbb T^d} \phi (y) f(y) \int_{\R^d} \psi (\xi -n) e^{i \xi (x-y)} d\xi dy\\
& = & e^{inx} \int_{\T^d} f(y) \phi (y) \psi^{\vee} (x-y) e^{-iny} dy\\
& =  & e^{inx} \int_{\T^d} \sum_{m \in \Z^d} \widehat{f}(n-m) e^{i (n-m)y} \phi(y) \psi^{\vee} (x-y) e^{-iny} dy\\
& = & e^{inx} \sum_{m \in \Z^d} \widehat{f}(n-m) F_m(x),
\end{eqnarray*}
where $\widehat{f}(n-m)$ are the Fourier coefficients of $f$ (as a function on $\T^d$) at $n-m$ and 
\[F_m(x)= \int_{\R^d} \phi (y) \psi^{\vee} (x-y) e^{-imy} dy. \]
Taking the Fourier transform both sides,  we obtain
\begin{eqnarray*}
\psi (\xi -n) \widehat{\phi f} (\xi)
& = & \int_{\R^d}e^{inx} \sum_{m \in \Z^d} \widehat{f}(n-m) F_m(x) e^{-ix\xi} dx\\
& = & \sum_{m\in \Z^d} \widehat{f}(n-m) \int_{\R^d} \int_{\R^d} \phi (y) \psi^{\vee} (x-y) e^{-imy} dy  \ e^{i (n-\xi)x} dx\\
& = & \sum_{m\in \Z^d} \widehat{f}(n-m)\phi (y) \int_{\R^d}  \int_{\R^d}  \psi^{\vee} (x-y) e^{i (n-\xi)x} dx  \  e^{-imy} dy  \\
& = & \sum_{m\in \Z^d} \widehat{f}(n-m) \int_{\R^d}\phi (y)  \psi (\xi -n) e^{i (n-\xi)y} \  e^{-imy} dy \\
& = & \sum_{m\in \Z^d} \widehat{f}(n-m)   \psi (\xi -n)  \widehat{\phi} (m-n+\xi)
\end{eqnarray*}
Note that 
\begin{eqnarray*}
\| \psi (\xi -n)  \widehat{\phi} (m-n+\xi)\|_{L^p_{\xi}} & = & \int_{\R^d} | \psi (\xi)  \widehat{\phi} (m+\xi)|^p d\xi\\
& \lesssim & \int_{|\xi|_{\infty} \leq 1+\epsilon} |\widehat{\phi} (m+\xi)|^p d\xi\\
& \lesssim &  \frac{1}{\langle m \rangle ^{s+2}}
\end{eqnarray*}
and 
\begin{eqnarray*}
\|\psi (\xi -n) \widehat{\phi f} (\xi)\|_{L^p_{\xi}} & \lesssim & \sum_{m\in \Z^d} \widehat{f}(n-m) \frac{1}{\langle m \rangle ^{s+2}}.
\end{eqnarray*}
By Young's inequality, we obtain
\begin{eqnarray*}
\|\phi f \|_{\widehat{w}^{p,q}_s} & = & \|\langle n \rangle^s \| \chi_{Q}(\xi -n) \widehat{f}\|_{L^p_\xi} \|_{\ell^q} \lesssim  \left\| \sum_{m \in Z^d} \langle n-m \rangle^s |\widehat{f} (n-m) |\frac{1}{ \langle m \rangle^2} \right\|_{\ell^q}\\
& \lesssim & \| \langle n \rangle^s \widehat{f}(n) \|_{\ell^q}= \|f\|_{\mathcal{F}L^q_s}.
\end{eqnarray*}
\end{proof}

\section*{Acknowledgments}
The second author gratefully acknowledges the Post Doctoral Fellowship under the ISIRD project 9--551/2023/IITRPR/10229 of Dr. Manmohan Vashisth from Indian Institute of Technology Ropar. The second author is also grateful to Indian Institute of Science Education and Research Pune for financial assistance during his visit, which initiated this work, and to Harish-Chandra Research Institute, Prayagraj (Allahabad) for the RA position under Dr. Saikatul Haque for 2 months, which partially supported this work. This work was partially supported by the FIST program of the Department of Science and Technology, Government of India, Reference No. SR/FST/MS--I/2018/22(C). The authors would like to thank  professor H.G. Feichtinger for  pointing out several  useful references related to weights.

\section*{Statements and Declarations}
\textbf{Competing Interests.} The authors have no relevant financial or non-financial interests to disclose.

\textbf{Author Contributions.} All authors contributed equally.

\textbf{Availability of data and material.} Not applicable.

\bibliography{ref}
\bibliographystyle{plain}

\end{document}